\renewcommand{\maketitle}{\bgroup\setlength{\parindent}{0pt}
\begin{flushleft}
  \textbf{\@title}

  \@author
\end{flushleft}\egroup
} 
\def\footnoterule{\kern-3\p@
  \hrule \@width 2in \kern 2.6\p@} 
\title{{\Large \textbf{An arithmetic algebraic regularity lemma}}}
\author{Anand Pillay,\footnote[1]{Partially supported by NSF grant DMS-2054271.} Atticus Pascale Stonestrom\\{\small Department of Mathematics, University of Notre Dame}}
\begin{document}
\maketitle

{\small\noindent\textbf{Abstract:} We give an `arithmetic regularity lemma' for groups definable in finite fields, analogous to Tao's `algebraic regularity lemma' for graphs definable in finite fields. More specifically, we show that, for any $M>0$, any finite field $\mathbf{F}$, and any definable group $(G,\cdot)$ in $\mathbf{F}$ and definable subset $D\subseteq G$, each of complexity at most $M$, there is a normal definable subgroup $H\leqslant G$, of index and complexity $O_M(1)$, such that the following holds: for any cosets $V,W$ of $H$, the bipartite graph $(V,W,xy^{-1}\in D)$ is $O_M(|\mathbf{F}|^{-1/2})$-quasirandom. Various analogous regularity conditions follow; for example, for any $g\in G$, the Fourier coefficient $||\widehat{1}_{H\cap Dg}(\pi)||_{\mathrm{op}}$ is $O_M(|\mathbf{F}|^{-1/8})$ for every non-trivial irreducible representation $\pi$ of $H$. \newline 

\noindent\textbf{Notation:} Definable means `definable with parameters', and definable of complexity at most $M$ means definable by a formula of length at most $M$. A definable group of complexity at most $M$ is a definable group such that the formulas defining the underlying set of the group and the multiplication operation each have complexity at most $M$. Given a finite set $X$ and $f:X\to\mathbb{C}$, we have the $L^2$ and $L^\infty$ norms $||f||_2^2=\sum_{x\in X}|f(x)|^2$ and $||f||_\infty=\max_{x\in X}|f(x)|$. For a bounded linear map $f:V\to W$ between normed vector spaces, the operator norm $||f||_{\mathrm{op}}$ is $\sup_{v\in V\setminus 0}||f(v)||/||v||$. By a bipartite graph, we mean a triple $(V,W,E)$, where $E\subseteq V\times W$, and we define $E(v,W)=\{w\in W:(v,w)\in E\}$ and define $E(V,w)$ symmetrically, for all $v\in V$ and $w\in W$. If unambiguous, we will also write $N_v$ for $v\in V$ to mean $E(v,W)$ and $N_w$ for $w\in W$ to mean $E(V,w)$. If $\mathfrak{C}$ is a structure, $A\subset\mathfrak{C}$ is a subset, and $b,c$ are tuples from $\mathfrak{C}$, we write $b\equiv_A c$ to mean $\tp(b/A)=\tp(c/A)$. Given a set of parameters $A\subseteq\mathfrak{C}$ and an $A$-definable group $G$, we use $S_G(A)$ to denote the set of complete types over $A$ containing the formula defining $G$.

\section{Introduction}
Szemeredédi's regularity lemma applies to arbitrary graphs, and there is a vast literature on the strengthenings one can obtain by placing some additional hypotheses on the graphs one considers. One example of this kind is to look at bipartite graphs of the form (for example) $(G,G,xy^{-1}\in D)$, where $(G,\cdot)$ is a group and $D\subseteq G$ is a subset. The desired sort of result for such a graph is then to obtain a Szemerédi-style partition of the two copies of $G$, but where the partition is into algebraically well-structured sets: for example, cosets of a subgroup or translates of a Bohr set. With appropriate hypotheses on $G$ and $D$, this program has been carried out successfully in a number of different contexts; see for example \cite{chung_graham}, \cite{green}, \cite{gowers_groups}, \cite{terry_wolf_1}, \cite{terry_wolf_2}, \cite{alon_fox_zhao}, \cite{conant_pillay_terry_1}, \cite{conant_pillay_terry_2}, and \cite{conant}.

In a different direction, another variation for Szemerédi's regularity lemma was proved by Tao in \cite{tao} for families of graphs uniformly definable in finite fields, improving upon Szemerédi regularity both in the absence of irregular pairs and in the `power-saving' bound on the degree of regularity. Several alternative proofs were subsequently given, independently by Pillay and Starchenko in \cite{pillay_starchenko} and by Hrushovski in private communication, and then subsequently by Tao in \cite{tao_blog}. More recently, a hypergraph version of this theorem was proved by Chevalier and Levi in \cite{chevalier_levi}.

In this paper we merge the two themes above, proving an `arithmetic' version of Tao's theorem. In particular we prove the following, which are Theorem \ref{main_theorem} and Corollary \ref{main_theorem_fourier} respectively. (See Section \ref{qr_prelim_section} for definitions.) One typical example to keep in mind for the following result is that of the bipartite `Paley graph', where the definable group is the additive group and the distinguished subset is the set of quadratic residues. These graphs are well-known to be quasirandom; see for example \cite{zhao}.

\begin{theorem}
    For any $M$, there is a positive constant $C>0$ such that the following holds. Suppose $\mathbf{F}$ is a finite field, and that $G$ is a definable group in $\mathbf{F}$ and $D\subseteq G$ is a definable subset, both of complexity at most $M$. Then there is a definable normal subgroup $H\leqslant G$, of index and complexity at most $C$, such that, for any cosets $V,W$ of $H$, the bipartite graph $(V,W,xy^{-1}\in D)$ is $C|\mathbf{F}|^{-1/2}$-quasirandom.
\end{theorem}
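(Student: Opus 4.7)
The plan is to extend the model-theoretic strategy used by Pillay--Starchenko to reprove Tao's algebraic regularity lemma, now adapted to the group-theoretic setting. Fix a sequence of finite fields $\mathbf{F}_n$ of sizes tending to infinity, together with definable groups $G_n$ and subsets $D_n \subseteq G_n$ of complexity at most $M$, and pass to a non-principal ultraproduct $\mathbf{K} = \prod_n \mathbf{F}_n / \mathcal{U}$; the sequences $(G_n), (D_n)$ combine into a single definable group $G$ and definable subset $D$ in the pseudofinite field $\mathbf{K}$. By \L o\'s's theorem, it suffices to find a definable normal subgroup $H \leqslant G$ of finite index and complexity bounded in terms of $M$, with the property that each bipartite graph $(V, W, xy^{-1}\in D)$ on pairs of $H$-cosets is pseudofinitely quasirandom at error scale $O(|\mathbf{F}|^{-1/2})$.

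To produce $H$, consider a generic complete type $p \in S_G(\mathrm{acl}(\emptyset))$ --- a type of maximal coarse pseudofinite dimension in $G$ --- and form its left-stabilizer $\mathrm{Stab}(p)=\{g\in G: gp=p\}$. The stabilizer theorem for generic types in simple theories shows $\mathrm{Stab}(p)$ to be a type-definable subgroup of $G$ of bounded index, and the supersimplicity of $\mathrm{Th}(\mathbf{K})$ together with the resulting chain condition on definable subgroups forces the existence of a definable subgroup $H_0 \leqslant G$ of finite index approximating $\mathrm{Stab}(p)$. Taking the normal core of $H_0$, and then passing to a bounded further refinement that controls the $\mathrm{Gal}(\overline{\mathbf{F}}/\mathbf{F})$-action on the irreducible components of the variety underlying $D$ in the algebraic closure, yields the desired $H$: normal, of finite index in $G$, and of bounded complexity in $M$.

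The quasirandomness on coset pairs then reduces to a Lang--Weil-style counting identity. For cosets $V, W$ of $H$, the 4-cycle count in $(V, W, xy^{-1}\in D)$ is the cardinality of a definable subset $X \subseteq G^4$, which one compares with the expected value $d^4 (|V|\cdot|W|)^2$ for $d$ the edge density. Both are polynomial in $|\mathbf{F}|$, with leading coefficients determined by the number of top-dimensional absolutely irreducible components of the underlying varieties over $\overline{\mathbf{F}}$. By the Chatzidakis--van den Dries--Macintyre theorem --- the model-theoretic avatar of Lang--Weil --- the two agree to within $O(|\mathbf{F}|^{-1/2})$ precisely when these leading-coefficient data coincide.

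The main difficulty will be arranging this geometric coincidence for every choice of $V$ and $W$. This requires choosing $H$ so that, on each coset of $H$, the absolutely irreducible components of $D$ are ``trivial from the viewpoint of $H$'', in the sense that every Galois-orbit of such components is already absorbed into the coset structure. This is the group-theoretic analogue of the key geometric step in the Tao and Pillay--Starchenko proofs of the algebraic regularity lemma, with the additional subtlety that the refinement must preserve normality of $H$ in $G$ and keep both the index and complexity of $H$ bounded uniformly in $M$. I expect this to be achieved by an iterative construction, alternating normal-core and Galois-refinement steps, whose termination is ensured by the chain condition on definable subgroups in the supersimple theory $\mathrm{Th}(\mathbf{K})$.
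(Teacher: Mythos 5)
Your high-level plan agrees with the paper's: pass to an ultraproduct, find a definable finite-index normal subgroup $H$ of bounded complexity, and deduce the finitary statement via the Chatzidakis--van den Dries--Macintyre counting theorem. But the two places where your sketch substitutes a plan for an argument are precisely where the real work lies, and the ideas you propose there would not succeed as stated.

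First, you appeal to ``the resulting chain condition on definable subgroups'' in the supersimple theory to produce a definable finite-index $H_0$ approximating $\mathrm{Stab}(p) = G^{00}_M$ and then to ensure termination of your iteration. There is no such chain condition on finite-index definable subgroups in supersimple theories, and in fact pseudofinite fields give counterexamples: in positive characteristic $p$ the additive group admits infinite strictly descending chains of definable finite-index subgroups, e.g.\ iterated images of $x\mapsto x^p-x$. The fact that actually drives the passage from a type-definable bounded-index subgroup to a definable one is Lemma 6.1 of Hrushovski--Pillay (Fact~\ref{g0=g00} here), a statement special to S1 theories, which says that any type-definable subgroup over a model $M$ is an intersection of $M$-definable subgroups, and hence $G^{00}_M$ is an intersection of $M$-definable finite-index subgroups. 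Together with compactness applied to a suitable partial type, this yields a single definable finite-index $H$ uniformly, with no iteration; supersimplicity alone is not enough.

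Second, the substantive content of the theorem---that for an appropriate $H$ the leading terms in the CDvM estimates for $|b^{-1}D\cap c^{-1}D|$ depend only on the cosets of $H$ containing $b$ and $c$, outside a definable exceptional set of strictly smaller dimension---is what you are calling the ``geometric coincidence,'' and your proposal does not derive it, instead postulating a vague ``Galois-refinement'' iteration whose termination hinges on the nonexistent chain condition above. The paper establishes this coincidence by a model-theoretic argument: Fact~\ref{indep_reals_const} (from Pillay--Starchenko) gives that the Keisler measure $\nu(bD\cap cD)$ is constant as $(b,c)$ ranges over $M$-independent pairs of realizations of fixed types; a strengthening of the independence theorem for f-generic types (Lemma~\ref{pqr strong}, using Fact~\ref{pq=r}) shows this value is in fact the same for \emph{all} $M$-independent pairs of f-generics lying in fixed cosets of $G^{00}_M$ (Lemma~\ref{const_on_generics}); and then compactness plus Fact~\ref{g0=g00} converts the type-definable statement into a definable one (Lemmas~\ref{HF_fixed_cosets}--\ref{HF_bi}). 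None of this appears in your sketch, and the proposed étale/Galois-component bookkeeping---which is closer in spirit to Tao's original proof than to Pillay--Starchenko's---would require considerably more machinery than your ``iterative construction'' acknowledges, and in particular still needs some substitute for the S1 fact to guarantee uniform bounds on complexity and index.

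Finally, your mention of a Lang--Weil-style comparison of leading coefficients is in the right direction for the sharp $|\mathbf{F}|^{-1/2}$ bound, but it is worth noting the paper's route: one first gets weak $C|\mathbf{F}|^{-1/4}$-regularity (hence $C|\mathbf{F}|^{-1/4}$-quasirandomness) from the definable-coincidence lemma, and only then bootstraps to $|\mathbf{F}|^{-1/2}$ by observing that the relevant CDvM ``measures'' lie in a fixed finite set of rationals, so the two quantities you compare must be exactly equal once $|\mathbf{F}|$ is large, and the error is then purely the $|\mathbf{F}|^{-1/2}$ CDvM fluctuation. Your one-shot claim that the counts ``agree to within $O(|\mathbf{F}|^{-1/2})$ precisely when the leading-coefficient data coincide'' is correct, but you have not shown the coincidence; that is exactly the content of Lemmas~\ref{const_on_generics}--\ref{HF_bi}.
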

\begin{corollary}
    For any $M$, there is a positive constant $C>0$ such that the following holds. Suppose $\mathbf{F}$ is a finite field, and that $G$ is a definable group in $\mathbf{F}$ and $D\subseteq G$ is a definable subset, both of complexity at most $M$. Then there is a definable normal subgroup $H\leqslant G$, of index and complexity at most $C$, such that, for any $g\in G$, we have $||\widehat{1}_{H\cap Dg}(\pi)||_{\mathrm{op}}\leqslant C|\mathbf{F}|^{-1/8}$, for every non-trivial irreducible representation $\pi$ of $H$, where we use the convention $\widehat{f}(\pi)=\frac{1}{|H|}\sum_{h\in H}f(h)\pi(h^{-1})$ for the Fourier transform on $H$ of a function $f:H\to\mathbb{C}$.
\end{corollary}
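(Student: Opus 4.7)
My plan is to derive the Corollary from the Theorem by converting the graph-theoretic quasirandomness of the coset bipartite graphs into bounds on non-abelian Fourier coefficients, via a two-sided evaluation of the four-cycle count of a suitable bipartite Cayley-type graph.

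I first apply the Theorem to obtain a definable normal subgroup $H \leqslant G$ of index and complexity at most $C_0 = C_0(M)$ with the quasirandomness property, and fix $g \in G$. The bijection $y \mapsto gy$ from $H$ onto $gH$ identifies the bipartite graph $(H, gH, xy^{-1} \in D)$ with the bipartite Cayley-type graph $\Gamma_g := (H, H, xy^{-1} \in H \cap Dg)$, which is therefore $C_0|\mathbf{F}|^{-1/2}$-quasirandom. Writing $S := H \cap Dg$, $\delta := |S|/|H|$, and $\epsilon := C_0|\mathbf{F}|^{-1/2}$, the idea is now to evaluate the four-cycle count $C_4(\Gamma_g)$ in two different ways.

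On the graph side, writing the bipartite adjacency matrix as $M_E = \delta J + M'$ and expanding $M_EM_E^{\mathrm{T}}$, I use discrepancy both to control the degree sequence (which, via the $L^1$-form of DISC with singletons paired against the full side, bounds the cross terms $\delta J M'^{\mathrm{T}}$ and $\delta M' J^{\mathrm{T}}$ in Frobenius norm by $O(\epsilon)|H|^4$) and as a cut-norm bound on $M'$ (which via a standard cut-to-spectral-norm inequality bounds $\lVert M' \rVert_{\mathrm{op}}$, and thereby $\lVert M'M'^{\mathrm{T}}\rVert_F^2$, by $O(\epsilon)|H|^4$ up to $\log$ factors). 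This yields
\[ C_4(\Gamma_g) \;\leqslant\; \delta^4 |H|^4 + O(\epsilon)\,|H|^4. \]
On the Fourier side, the codegree kernel $K(u) := |S \cap u^{-1}S|$ on $H$ satisfies $C_4(\Gamma_g) = |H|\,\lVert K\rVert_2^2$, and, using the stated convention $\widehat{f}(\pi) = \frac{1}{|H|}\sum_h f(h)\pi(h^{-1})$ together with $\widehat{f \ast g}(\pi) = |H|\widehat{g}(\pi)\widehat{f}(\pi)$ and $\widehat{\check{f}}(\pi) = \widehat{f}(\pi)^{\ast}$ (for real-valued $f$), a direct computation gives $\widehat{K}(\pi) = |H|\,\widehat{1_S}(\pi)^{\ast}\widehat{1_S}(\pi)$. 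Plancherel then assembles this into
\[ C_4(\Gamma_g) \;=\; |H|^4 \sum_{\pi} d_\pi \,\bigl\lVert \widehat{1_S}(\pi)^{\ast}\widehat{1_S}(\pi)\bigr\rVert_{HS}^2, \]
the sum ranging over irreducible representations of $H$.

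The trivial representation contributes exactly $\delta^4 |H|^4$, so comparing the two expressions gives $\sum_{\pi\neq 1} d_\pi \lVert \widehat{1_S}(\pi)^\ast \widehat{1_S}(\pi)\rVert_{HS}^2 = O(\epsilon)$; isolating a single non-trivial $\pi$ and using $\lVert A^{\ast}A\rVert_{HS}^2 \geqslant \lVert A\rVert_{\mathrm{op}}^4$ yields $\lVert \widehat{1_S}(\pi)\rVert_{\mathrm{op}} = O(\epsilon^{1/4}) = O(|\mathbf{F}|^{-1/8})$, matching the target. The main technical concern is essentially bookkeeping rather than any deep difficulty: tracking the non-abelian Fourier normalizations carefully under the paper's convention, and ensuring that the DISC-to-$C_4$ passage really loses only one power of $\epsilon$, which requires using discrepancy in two different guises (once in $L^1$ form for degree regularity, and once as a cut-norm bound feeding into a cut-to-spectral-norm inequality).
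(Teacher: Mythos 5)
Your proposal is in the right spirit but is considerably more elaborate than the paper's own argument, and part of it is superfluous. The paper's proof is essentially one line: the graph $(H,H,xy^{-1}\in gD\cap H)$ is isomorphic to $(g^{-1}H,H,xy^{-1}\in D)$ (a coset graph covered by Theorem \ref{main_theorem}), and Corollary \ref{qr_subsets_of_groups_2} then directly converts $K|\mathbf{F}|^{-1/2}$-quasirandomness of that bipartite graph into the statement that $gD\cap H$ is a $K^{1/4}|\mathbf{F}|^{-1/8}$-quasirandom subset of $H$, which is precisely the Fourier-coefficient bound. The representation-theoretic work is already packaged in Lemma \ref{qr_subsets_of_groups_1} (decomposing the regular representation and identifying the operator norm of the adjacency matrix on mean-zero functions with $\max_{\pi\neq 1}|H|\,\|\widehat{1}_{D^{-1}}(\pi)\|_{\mathrm{op}}$) and Fact \ref{qr_thm}.

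Two specific remarks on your approach. First, your ``graph side'' is unnecessary and also the weakest part of the argument: the bound $C_4(\Gamma_g)\leqslant(\delta^4+\epsilon)|H|^4$ is literally the paper's Definition \ref{qr_defn} of $\epsilon$-quasirandomness, so you get it for free from the Theorem with $\epsilon=C_0|\mathbf{F}|^{-1/2}$; re-deriving it from discrepancy via cut-norm-to-spectral-norm inequalities ``up to log factors'' risks genuinely losing the power-saving bound, which would be fatal, and in any case replaces a definition with a harder theorem. Second, your ``Fourier side'' (codegree kernel $K$, $C_4=|H|\|K\|_2^2$, Plancherel, trivial representation contributing $\delta^4|H|^4$, and $\|A^\ast A\|_{HS}\geqslant\|A\|_{\mathrm{op}}^2$ combined with $d_\pi\geqslant 1$) is a correct and self-contained re-derivation of the content of Lemma \ref{qr_subsets_of_groups_1} plus the $1\Rightarrow 3$ direction of Fact \ref{qr_thm}, and it does recover the same $\epsilon^{1/4}$ exponent as Corollary \ref{qr_subsets_of_groups_2}. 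So your proof works once the graph-side detour is removed, but it reproves preliminaries the paper invokes directly, rather than taking a genuinely different route.
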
 In Section \ref{special_cases_sec} we make some comparisons with results from \cite{green} and \cite{gowers_groups}, and discuss a few special cases of our result in which one can avoid passing to a proper subgroup $H$; this occurs most generally if $G$ is the $\mathbf{F}$-points of a simply connected algebraic group defined over $\mathbf{F}$, or if the characteristic of $\mathbf{F}$ is sufficiently large and $G$ is the $\mathbf{F}$-points of a simply connected algebraic group over $\mathbb{Z}$ that is simply connected over any field of characteristic $0$. The latter case applies for example to the additive group, and in that case our result relates in a similar way to the `finite field model' theorem from \cite{green} as Tao's result from \cite{tao} relates to Szemerédi's regularity lemma; we remark on these connections in Section \ref{additive group section}. On the other hand, the former case applies in particular to \textit{semisimple} simply connected algebraic groups; in this case, however, vastly more can be said, and \textit{every} subset is quasirandom, not just definable ones. This is well-known to experts, and already remarked upon in a few places – for example in Breuillard's notes \cite{breuillard_2} – but for completeness we give a complete discussion of it in Section \ref{semisimple case section}.

We also briefly remark that there are two other more general contexts to which we expect the proofs here to generalize; one is to the setting of Section 6.2 in \cite{garcia_macpherson_steinhorn}, of pseudofinite structures satisfying certain additional conditions. The other is to the setting of \cite{chevalier_levi}, of definable sets of finite total dimension uniformly definable in the difference fields $(\mathbf{F}_q^{\mathrm{alg}},x\mapsto x^{q})$. However we do not pursue these connections explicitly here.

\section{Preliminaries}
\subsection {Quasirandomness}\label{qr_prelim_section}
Quasirandom graphs were first defined in \cite{chung_graham_wilson}. We will need the bipartite version of the notion, introduced in \cite{gowers_graphs}.
\begin{definition}\label{qr_defn}
    A finite bipartite graph $(V,W,E)$ with $|E|=\delta|V||W|$ is $\varepsilon$-quasirandom if $$\sum_{v,v'\in V}\sum_{w,w'\in W}1_E(v,w)1_E(v,w')1_E(v',w)1_E(v',w')\leqslant(\delta^4+\varepsilon)|V|^2|W|^2;$$ equivalently, if we have $\sum_{v,v'\in V}|E(v,W)\cap E(v',W)|^2\leqslant(\delta^4+\varepsilon)|V|^2|W|^2$, or equivalently if we have the analogous symmetric bound with the roles of $V$ and $W$ reversed.
\end{definition}(Note that, by two applications of Cauchy-Schwarz, the left hand side in the quantity above is always bounded below by $\delta^4|V|^2|W|^2$.) Recall also the following closely related notions:

\begin{definition}
    Let $(V,W,E)$ be a finite bipartite graph with $|E|=\delta|V||W|$. Then:
    \begin{enumerate}
        \item $(V,W,E)$ is $\varepsilon$-regular if, for all $A\subseteq V$ and $B\subseteq W$ with $|A|\geqslant\varepsilon|V|$ and $|B|\geqslant\varepsilon|W|$, we have $\left||E\cap (A\times B)|-\delta|A||B|\right|\leqslant\varepsilon|A||B|$.
        \item $(V,W,E)$ is weakly $\varepsilon$-regular if, for all $A\subseteq V$ and $B\subseteq W$, we have $\big||E\cap (A\times B)-\delta|A||B|\big|\leqslant\varepsilon|V||W|$.
    \end{enumerate} Note that $\varepsilon$-regularity implies weak $\varepsilon$-regularity, and weak $\varepsilon$-regularity implies $\varepsilon^{1/3}$-regularity.
\end{definition}

(Condition 2 in the above definition is the one that appears more naturally in the setting of \cite{gowers_graphs} and in the proof of our theorem. In \cite{tao}, Tao refers to it as `$\varepsilon$-regularity,' but this conflicts with the usual terminology from the Szemerédi regularity theorem, which is why we call it `weak $\varepsilon$-regularity'.) Now the following is due to Gowers, from \cite{gowers_graphs} and \cite{gowers_groups}.

\begin{fact}\label{qr_thm}
    Let $(V,W,E)$ be a bipartite graph with $|E|=\delta|V||W|$, and let $M$ be its $|W|\times|V|$-adjacency matrix. Then the following are polynomially equivalent:
    \begin{enumerate}
        \item $(V,W,E)$ is $\varepsilon_1$-quasirandom.
        \item $(V,W,E)$ is weakly $\varepsilon_2$-regular.
        \item For all $f\in\mathbb{C}^V$ with $\sum_{v\in V}f(v)=0$, we have $||Mf||_2/||f||_2\leqslant\varepsilon_3|V|^{1/2}|W|^{1/2}$.
    \end{enumerate} More precisely, if the first condition holds, then the second and third condition hold with $\varepsilon_2=\varepsilon_1^{1/4}$ and $\varepsilon_3=\varepsilon_1^{1/4}$. Conversely, if the second condition holds, then the first holds with $\varepsilon_1=12\varepsilon_2$, and if the third condition holds, then the first holds with $\varepsilon_1=\delta\varepsilon_3^2$.
\end{fact}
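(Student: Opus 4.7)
The equivalence is a classical result of Gowers; I would prove the four implications $1 \Rightarrow 3$, $1 \Rightarrow 2$, $2 \Rightarrow 1$, and $3 \Rightarrow 1$ in turn. For $1 \Rightarrow 3$, I would view $||Mf||_2^2 = \sum_{v,v'} |N_v \cap N_{v'}| f(v)\overline{f(v')}$ as a quadratic form. Since $f$ has mean zero, I can subtract the average $\mu = \sum_w |N_w|^2 / |V|^2$ from every entry of the matrix $(|N_v \cap N_{v'}|)_{v,v'}$ without changing the form. A Cauchy--Schwarz argument gives $\mu \geqslant \delta^2 |W|$, so condition 1 yields the variance bound $\sum_{v,v'}(|N_v \cap N_{v'}| - \mu)^2 \leqslant \varepsilon_1 |V|^2|W|^2$. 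A further Cauchy--Schwarz on the shifted form then gives $||Mf||_2^2 \leqslant \sqrt{\varepsilon_1}\, |V||W| \cdot ||f||_2^2$, and hence $\varepsilon_3 = \varepsilon_1^{1/4}$.

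For $1 \Rightarrow 2$, I would apply Cauchy--Schwarz twice to $e(A,B) - \delta|A||B| = \sum_{(v,w) \in A \times B}(1_E(v,w) - \delta)$, once in $v$ and once in $w$; this bounds $(e(A,B) - \delta|A||B|)^4$ by $|V|^2|W|^2$ times a signed $4$-cycle-type sum, which is controlled by condition 1 via a second round of the same variance manipulation, giving $\varepsilon_2 = \varepsilon_1^{1/4}$. For the converse $2 \Rightarrow 1$, I would expand $T_4 = \sum_{v,v',w,w'}\prod_{(a,b) \in \{v,v'\}\times\{w,w'\}}(\delta + g(a,b))$ with $g = 1_E - \delta$ into sixteen monomial terms: the constant term contributes $\delta^4|V|^2|W|^2$, the four linear terms vanish because $\sum_{v,w}g(v,w) = 0$, and each remaining monomial is a multilinear form in $g$ bounded by $O(\varepsilon_2)|V|^2|W|^2$ via the standard cut-norm reformulation of weak $\varepsilon_2$-regularity.

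The direction $3 \Rightarrow 1$ is the most delicate, and is where I expect the main obstacle. The min--max characterisation of singular values applied to condition 3 yields $\sigma_i(M) \leqslant \varepsilon_3 \sqrt{|V||W|}$ for $i \geqslant 2$, whence $\sum_{i \geqslant 2}\sigma_i(M)^4 \leqslant \varepsilon_3^2|V||W|(||M||_F^2 - \sigma_1^2) \leqslant \delta \varepsilon_3^2 |V|^2|W|^2$. The main remaining task is to bound $\sigma_1(M)^4$ by $\delta^4|V|^2|W|^2$ plus lower-order error, equivalently to show that the top singular vector of $M$ is approximately the all-ones direction $1_V/\sqrt{|V|}$. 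A one-sided mean-zero control of $M$ does not by itself force this, and I would address it by applying condition 3 symmetrically on $M^T$ (using the bipartite structure of the problem) together with a Weyl-type decomposition $M = \delta J + G$ to extract a degree-regularity estimate $\sum_w (|N_w| - \delta|V|)^2 \leqslant O(\varepsilon_3^2 |V|^2|W|)$. Combining yields $T_4 \leqslant (\delta^4 + \delta\varepsilon_3^2)|V|^2|W|^2$.
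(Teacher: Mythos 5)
The paper does not prove this fact; it is cited from Gowers (\cite{gowers_graphs}, \cite{gowers_groups}), so there is no internal proof to compare against. Your $1 \Rightarrow 3$ argument (centering the Gram matrix $(|N_v\cap N_{v'}|)$ at its mean $\mu$, bounding the centered Frobenius norm by $\varepsilon_1|V|^2|W|^2$ via the quasirandomness count, and one Cauchy--Schwarz) is correct and clean. The sketches for $1 \Rightarrow 2$ (two Cauchy--Schwarz applications) and $2 \Rightarrow 1$ (binomial expansion of $T_4$ plus the cut-norm form of weak regularity) are the standard arguments and are fine in spirit.

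The $3 \Rightarrow 1$ direction, which you yourself flag as delicate, has a genuine gap, and you are correcting the wrong defect. You propose extracting a degree-regularity estimate on the $W$-side, $\sum_w(|N_w|-\delta|V|)^2\leqslant O(\varepsilon_3^2|V|^2|W|)$. But the obstruction to $\sigma_1(M)^2$ being close to $\delta^2|V||W|$ is that $1_V$ need not be (near) the top right singular vector of $M$, and that is governed by the $V$-side degree sequence $d_v=|N_v|$, not the $W$-side one. Worse, condition 3 gives no control over either degree sequence: it constrains $M$ only on mean-zero inputs $f\in\mathbb{C}^V$, so it says nothing about $M1_V=(d_w)_w$, and it is not the same as bounding $\|M-\delta J\|_{\mathrm{op}}$. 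Concretely, take $V=W=[n]$, fix $S\subseteq V$ with $|S|=\delta n$, and set $E=S\times W$. Then $M=1_W1_S^{\top}$ has rank one, the restriction of $M$ to $1_V^{\perp}$ has operator norm $n\sqrt{\delta(1-\delta)}$, so condition 3 holds with $\varepsilon_3^2=\delta(1-\delta)$, and every $W$-degree equals $\delta n$ exactly, so your proposed degree estimate is trivially satisfied. Nevertheless $T_4=\delta^2n^4$, so the graph is $\varepsilon_1$-quasirandom only for $\varepsilon_1\geqslant\delta^2-\delta^4$, which strictly exceeds $\delta\varepsilon_3^2=\delta^2(1-\delta)$ for every $\delta\in(0,1)$. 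The implication $3\Rightarrow 1$ with $\varepsilon_1=\delta\varepsilon_3^2$ therefore requires an extra standing hypothesis, namely that the $V$-side degrees $d_v$ all equal $\delta|W|$ (equivalently that $1_V$ is an exact singular vector with value $\delta\sqrt{|V||W|}$); this holds automatically in Gowers' setting and in the paper's applications, where the graphs $(H,H,xy^{-1}\in D)$ are bi-regular with all degrees $|D|$, but your proposal neither invokes this assumption nor can derive what it needs from condition 3 alone.
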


\subsection{Quasirandom subsets of groups}\label{quasirandom subsets of groups section}
Given a finite group $H$ and a subset $D\subseteq H$, quasirandomness of the bipartite graph $(H,H,xy^{-1}\in D)$ is (polynomially) equivalent to a bound on the Fourier coefficients of the indicator function $1_D:H\to \mathbb{C}$. This connection is widely used in additive combinatorics in abelian groups, and was perhaps first observed in \cite{chung_graham}. Although it is surely also well-known in the non-abelian case, and for instance is used implicitly throughout \cite{gowers_groups}, we could not find a reference making the connection explicit, so we include the details for completeness. See Section 8 of \cite{breuillard} for a very nice treatment of the material, which does not explicitly state the observation below but clearly has it in mind.

Let us first recall some basic facts about non-abelian Fourier analysis. We describe only the situation for finite groups, but by the Peter-Weyl theorem the machinery of non-abelian Fourier analysis all works working over a compact group equipped with Haar measure.

Let $H$ be a finite group, and let $\widehat{H}$ denote the set of irreducible complex representations of $H$; unlike in the abelian case, $\widehat{H}$ does not in general have the structure of a group. Given a function $f:H\to\mathbb{C}$, we define the Fourier transform $\widehat{f}$ which takes an element $\pi:H\to\mathrm{GL}(V_\pi)$ of $\widehat{H}$ to the operator $\frac{1}{|H|}\sum_{g\in H}f(g)\pi(g^{-1})\in\mathrm{End}(V_\pi)$. The basic properties of the Fourier transform in the abelian case, namely the Fourier inversion formula, the Parseval-Plancherel identity, and the identity for the Fourier transform of a convolution, all have appropriate analogues in the non-abelian case. Given a subset $D\subseteq H$, following \cite{breuillard}, let us say that $D$ is $\varepsilon$-quasirandom if $\max_{\pi\in\widehat{H},\pi\neq\pi_{\mathrm{triv}}}||\widehat{1}_{D^{-1}}(\pi)||_{\mathrm{op}}\leqslant\varepsilon$, ie if $\max_{\pi\in\widehat{H},\pi\neq\pi_{\mathrm{triv}}}||\sum_{d\in D}\pi(d)||_{\mathrm{op}}\leqslant\varepsilon|H|$.

\begin{lemma}\label{qr_subsets_of_groups_1}
    $D\subseteq H$ is $\varepsilon$-quasirandom iff we have $||Mf||_2/||f||_2\leqslant\varepsilon|H|$ for all $f\in\mathbb{C}^H$ with $\sum_{g\in H}f(g)=0$, where $M$ is the adjacency matrix of the graph $(H,H,xy^{-1}\in D)$.
\end{lemma}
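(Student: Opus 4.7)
The plan is to view $M$ as a convolution operator and pass to the Fourier side. With the convention $(a*b)(x)=\sum_{g\in H}a(g)b(g^{-1}x)$, the substitution $g=xy^{-1}$ immediately gives $(Mf)(x)=\sum_{y}1_D(xy^{-1})f(y)=(1_D*f)(x)$. A direct unwinding of the paper's Fourier convention yields $\widehat{1_D*f}(\pi)=|H|\,\widehat{f}(\pi)\widehat{1_D}(\pi)$, and unitarity of $\pi$ together with invariance of the operator norm under adjoints gives $\|\widehat{1_D}(\pi)\|_{\mathrm{op}}=\|\widehat{1_{D^{-1}}}(\pi)\|_{\mathrm{op}}$.

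For the forward direction, assume $D$ is $\varepsilon$-quasirandom. I would invoke Plancherel in the form $\|h\|_2^2=|H|\sum_\pi\dim(\pi)\|\widehat{h}(\pi)\|_{\mathrm{HS}}^2$ to expand
\[
\|Mf\|_2^2=|H|^3\sum_\pi\dim(\pi)\,\|\widehat{f}(\pi)\widehat{1_D}(\pi)\|_{\mathrm{HS}}^2.
\]
When $\sum_g f(g)=0$, the trivial representation contributes nothing since $\widehat{f}(\pi_{\mathrm{triv}})=0$. The submultiplicativity $\|AB\|_{\mathrm{HS}}\leqslant \|A\|_{\mathrm{HS}}\|B\|_{\mathrm{op}}$, the bound $\|\widehat{1_D}(\pi)\|_{\mathrm{op}}\leqslant \varepsilon$ for non-trivial $\pi$, and Plancherel applied in reverse to $f$ together give $\|Mf\|_2\leqslant\varepsilon|H|\|f\|_2$.

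For the converse, I would test the spectral hypothesis on matrix coefficients. Fix a non-trivial irreducible $\pi:H\to\mathrm{GL}(V_\pi)$ and vectors $v,w\in V_\pi$, and set $f_{v,w}(g)=\langle\pi(g)w,v\rangle$. Schur orthogonality against the trivial representation forces $\sum_g f_{v,w}(g)=0$, and a short calculation using $\pi(d^{-1})=\pi(d)^*$ shows $(Mf_{v,w})(x)=\langle\pi(x)w,\sum_{d\in D}\pi(d)v\rangle$. Applying Schur orthogonality once more to evaluate both $L^2$-norms yields
\[
\frac{\|Mf_{v,w}\|_2}{\|f_{v,w}\|_2}=\frac{\|\sum_{d\in D}\pi(d)v\|}{\|v\|},
\]
and supremizing over $v$ recovers $\|\sum_{d\in D}\pi(d)\|_{\mathrm{op}}$, which is at most $\varepsilon|H|$ by hypothesis; this is exactly $\varepsilon$-quasirandomness.

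The only inputs beyond the definitions are Plancherel and Schur orthogonality for finite non-abelian groups, both entirely standard. The main obstacle is really just bookkeeping: keeping straight the various factors of $|H|$ produced by the paper's Fourier normalization, and the swap between $D$ and $D^{-1}$ that unitarity of $\pi$ lets one make freely.
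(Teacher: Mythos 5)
Your proof is correct, but it takes a genuinely different and more computational route than the paper's. The paper works on the spatial side: it identifies the adjacency matrix $M$ with the endomorphism $\phi_D=\sum_{d\in D}\tau(d)$ of $\mathbb{C}^H$, where $\tau$ is the left regular representation; it observes that the mean-zero subspace $W$ decomposes as $\bigoplus_{\pi\neq\pi_{\mathrm{triv}}}d_\pi V_\pi$ and that $\phi_D$ preserves each irreducible summand, acting there as $\sum_{d\in D}\pi(d)$; and then it concludes from the single structural fact that the operator norm of a direct sum of operators is the maximum of their individual operator norms. That one observation gives both directions of the biconditional at once, with no normalization constants to chase. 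You instead pass to the Fourier side and argue the two implications separately: for one you use Plancherel, the convolution identity $\widehat{1_D*f}(\pi)=|H|\widehat{f}(\pi)\widehat{1_D}(\pi)$, and the bound $\|AB\|_{\mathrm{HS}}\leqslant\|A\|_{\mathrm{HS}}\|B\|_{\mathrm{op}}$; for the other you test the spectral hypothesis against matrix coefficients $g\mapsto\langle\pi(g)w,v\rangle$ and evaluate the resulting $L^2$ norms with Schur orthogonality. Both arguments rest on the same representation-theoretic content — that $W$ is precisely the sum of the non-trivial isotypic components of the regular representation — but the paper's packaging is shorter and avoids the $|H|$-bookkeeping, whereas yours is closer to a pure Fourier-analytic computation and makes the convolution structure of $M$ explicit. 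Your handling of the $D$ versus $D^{-1}$ discrepancy via unitarity of $\pi$ is correct and is indeed exactly what is needed to reconcile the quasirandomness definition (stated with $\widehat{1}_{D^{-1}}$) with the convolution formula (which produces $\widehat{1}_D$).
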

\begin{proof}

Consider the function space $L^2(H)=\mathbb{C}^H$ equipped with the inner product $\langle f,g\rangle=\sum_{x\in H}\overline{f(x)}g(x)$. The usual left action $(\tau(x)f)(y)=f(x^{-1}y)$ of $H$ on $\mathbb{C}^H$ gives a unitary representation $\tau:H\to\mathrm{GL}(\mathbb{C}^H)$, which is isomorphic to the direct sum $\bigoplus_{\pi\in \widehat{H}}d_\pi V_\pi$ of irreducible representations of $H$ with multiplicities $d_\pi=\dim(V_\pi)$. On the other hand, we have $\mathbb{C}^H=\mathbb{C}1_H\oplus W$, where $1_H$ is the constant function $x\mapsto 1$ and $$W=\{f\in\mathbb{C}^H:\sum_{x\in H}f(x)=0\},$$ and $\mathbb{C}1_H$ and $W$ are $H$-invariant. So it follows that $W\cong\bigoplus_{\pi\in\widehat{H},\pi\neq\pi_{\mathrm{triv}}}d_\pi V_\pi$.

Now, consider the endomorphism $\phi_D=\sum_{d\in D}\tau(d)$ of $\mathbb{C}^H$. The operator norm of a direct sum of endomorphisms is the maximum of their respective operator norms, so we have $||\phi_D\mathord{\upharpoonright}_W||_{\mathrm{op}}=\max_{\pi\in\widehat{H},\pi\neq\pi_{\mathrm{triv}}}||\phi_D\mathord{\upharpoonright}_{V_\pi}||_{\mathrm{op}}$. So $D$ is $\varepsilon$-quasirandom if and only if $||\phi_D\mathord{\upharpoonright}_W||_{\mathrm{op}}\leqslant\varepsilon |H|$.

For $g\in H$, let $\iota(g)\in\mathbb{C}^H$ be the indicator function of $\{g\}$. Thus $\{\iota(g):g\in H\}$ is a basis for $\mathbb{C}^H$ and for $x,y\in H$ we have $\tau(x)\iota(y)=\iota(xy)$. So $\phi_D(\iota(y))=\sum_{d\in D}\iota(dy)$. Note that $\iota(x)$ has coefficient $1$ in this sum if and only if $xy^{-1}\in D$. So the matrix representation of $\phi_D$ with respect to (any enumeration of) the basis $\{\iota(g):g\in H\}$ is the same as the adjacency matrix $M$ (with respect to the corresponding enumeration of $H$). So $||\phi_D\mathord{\upharpoonright}_W||_{\mathrm{op}}=\sup_{f\in W}||Mf||_2/||f||_2$ and the claim follows.
\end{proof}

Now from Fact \ref{qr_thm} we have the following consequence.

\begin{corollary}\label{qr_subsets_of_groups_2}
    Let $D\subseteq H$ and consider the following two conditions.
    \begin{enumerate}
        \item $D$ is $\varepsilon$-quasirandom.
        \item $(H,H,xy^{-1}\in D)$ is $\varepsilon'$-quasirandom.
    \end{enumerate} If the second condition holds, then the first condition holds with $\varepsilon=(\varepsilon')^{1/4}$. If the first condition holds, then the second condition holds with $\varepsilon'=\varepsilon^2$.
\end{corollary}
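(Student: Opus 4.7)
The corollary should be a direct consequence of chaining Lemma \ref{qr_subsets_of_groups_1} with Fact \ref{qr_thm} applied to $V=W=H$, since in both results the key intermediate quantity is the operator norm of the adjacency matrix restricted to the mean-zero subspace. The plan is to observe that in Lemma \ref{qr_subsets_of_groups_1} the condition $\|Mf\|_2/\|f\|_2\leqslant\varepsilon|H|$ on mean-zero $f$ is precisely condition 3 of Fact \ref{qr_thm} with $\varepsilon_3=\varepsilon$, because $|V|^{1/2}|W|^{1/2}=|H|$.

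For the first implication, I would assume $(H,H,xy^{-1}\in D)$ is $\varepsilon'$-quasirandom. By Fact \ref{qr_thm}, this gives the operator norm bound $\|Mf\|_2/\|f\|_2\leqslant(\varepsilon')^{1/4}|H|$ for mean-zero $f$, and then Lemma \ref{qr_subsets_of_groups_1} translates this into $(\varepsilon')^{1/4}$-quasirandomness of $D$.

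For the converse, I would assume $D$ is $\varepsilon$-quasirandom. Lemma \ref{qr_subsets_of_groups_1} again gives the operator norm bound $\|Mf\|_2/\|f\|_2\leqslant\varepsilon|H|$ for mean-zero $f$, so condition 3 of Fact \ref{qr_thm} holds with $\varepsilon_3=\varepsilon$. Hence the bipartite graph is $\delta\varepsilon^2$-quasirandom, where $\delta=|E|/|H|^2=|D|/|H|\leqslant 1$, yielding the claimed $\varepsilon'=\varepsilon^2$ bound.

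There is no real obstacle here: Lemma \ref{qr_subsets_of_groups_1} and Fact \ref{qr_thm} have been set up so that the operator norm condition is common to both, and one need only check that the normalization constants agree (which they do, since $|V|^{1/2}|W|^{1/2}=|H|$ when $V=W=H$) and that the factor of $\delta$ in the Gowers bound is harmless since $\delta\leqslant 1$.
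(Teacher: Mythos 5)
Your proposal is correct and is essentially the argument the paper has in mind: the paper states the corollary as an immediate consequence of Fact \ref{qr_thm} via Lemma \ref{qr_subsets_of_groups_1}, and you have correctly spelled out the chain, including the normalization check $|V|^{1/2}|W|^{1/2}=|H|$ and the observation that $\delta=|D|/|H|\leqslant 1$ lets you drop the $\delta$ factor in Gowers's bound.
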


Now we can state some theorems from \cite{gowers_groups} and \cite{green} using the language of Lemma \ref{qr_subsets_of_groups_1}. From \cite{gowers_groups} we have the following (see \cite{breuillard} too for a more succinct proof):

\begin{fact}\label{qr_dimension}
    Let $d$ be the minimal dimension of a non-trivial irreducible representation of $H$. Then every subset of $H$ is $d^{-1/2}$-quasirandom. Conversely, if every subset of $H$ is $\varepsilon$-quasirandom, then any non-trivial irreducible representation of $H$ has dimension at least $\varepsilon^{-2/3}/100$.
\end{fact}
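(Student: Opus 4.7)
The plan for the first assertion is to invoke the non-abelian Plancherel identity. Writing $\|\cdot\|_{\mathrm{HS}}$ for the Hilbert--Schmidt norm, Plancherel applied to $f := 1_{D^{-1}}$ yields $\sum_{\pi \in \widehat{H}} d_\pi \|\widehat{f}(\pi)\|_{\mathrm{HS}}^2 = \|f\|_2^2/|H| = |D|/|H| \leq 1$, where $d_\pi := \dim V_\pi$. Restricting to any single non-trivial $\pi$ immediately gives $\|\widehat{f}(\pi)\|_{\mathrm{HS}} \leq d_\pi^{-1/2} \leq d^{-1/2}$, and the inequality $\|\cdot\|_{\mathrm{op}} \leq \|\cdot\|_{\mathrm{HS}}$ on finite-dimensional operators then finishes it.

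For the converse I would argue by contrapositive. Given a non-trivial irreducible $\pi : H \to U(V_\pi)$ of dimension $d_\pi$, I fix a unit vector $v \in V_\pi$ and set $c_g := \langle \pi(g) v, v\rangle$. Three ingredients about $(c_g)_{g\in H}$: (i) $\sum_g c_g = 0$, since $\sum_g \pi(g)=0$ for any non-trivial irrep; (ii) $\sum_g |c_g|^2 = |H|/d_\pi$, by Schur orthogonality of matrix coefficients; and (iii) $|c_g| \leq 1$ pointwise, by Cauchy--Schwarz on the unit vector $\pi(g) v$. Splitting $|c_g|^2 = (\mathrm{Re}\, c_g)^2 + (\mathrm{Im}\, c_g)^2$ and assuming without loss of generality $\sum_g (\mathrm{Re}\, c_g)^2 \geq |H|/(2 d_\pi)$, (iii) upgrades this to $\sum_g |\mathrm{Re}\, c_g| \geq |H|/(2 d_\pi)$. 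Setting $D := \{g \in H : \mathrm{Re}\, c_g > 0\}$ and using (i) to see that the positive and negative parts of $\sum_g \mathrm{Re}\, c_g$ cancel exactly, $\sum_{d \in D}\mathrm{Re}\, c_d = \tfrac{1}{2}\sum_g |\mathrm{Re}\, c_g| \geq |H|/(4 d_\pi)$, so
\[
\left\|\sum_{d \in D}\pi(d)\right\|_{\mathrm{op}} \;\geq\; \left|\left\langle \sum_{d \in D}\pi(d)\, v,\, v\right\rangle\right| \;\geq\; \frac{|H|}{4 d_\pi}.
\]
Equivalently, $\|\widehat{1}_{D^{-1}}(\pi)\|_{\mathrm{op}} \geq 1/(4 d_\pi)$, so $\varepsilon$-quasirandomness of $D$ forces $d_\pi \geq 1/(4\varepsilon)$, which is comfortably stronger than $d_\pi \geq \varepsilon^{-2/3}/100$ for $\varepsilon \in (0,1)$ (and the statement is vacuous otherwise).

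The main obstacle would really just be bookkeeping: carefully aligning the asymmetric Fourier convention $\widehat{f}(\pi) = |H|^{-1}\sum_g f(g)\pi(g^{-1})$ with the placement of $D^{-1}$ in the definition of quasirandomness, and tracking the various $|H|$ factors through Plancherel. Once these are straightened out, both directions are direct consequences of Schur orthogonality together with the comparison of operator and Hilbert--Schmidt norms on a space of dimension $d_\pi$.
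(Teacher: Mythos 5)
The paper itself does not prove this statement; it is presented as a cited Fact, attributed to \cite{gowers_groups} with \cite{breuillard} offered as a source for ``a more succinct proof.'' So there is no in-paper argument to compare against, and I am evaluating your proof on its own terms. Both directions are correct. The forward direction via Plancherel and $\|\cdot\|_{\mathrm{op}}\leq\|\cdot\|_{\mathrm{HS}}$ is the standard argument and is exactly the one Breuillard sketches; the normalization $\sum_\pi d_\pi\|\widehat f(\pi)\|_{\mathrm{HS}}^2 = \frac{1}{|H|}\sum_g|f(g)|^2$ is the right one for the convention $\widehat f(\pi)=\frac{1}{|H|}\sum_g f(g)\pi(g^{-1})$, and the identification $\widehat{1}_{D^{-1}}(\pi)=\frac{1}{|H|}\sum_{d\in D}\pi(d)$ squares with the paper's definition. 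For the converse, your deterministic construction $D=\{g:\mathrm{Re}\,c_g>0\}$ with $c_g=\langle\pi(g)v,v\rangle$ is clean: (i)--(iii) are all justified (the WLOG on real versus imaginary parts is harmless since both sum to zero and are pointwise bounded by $1$), the cancellation step $\sum_{d\in D}\mathrm{Re}\,c_d=\tfrac12\sum_g|\mathrm{Re}\,c_g|$ is right, and the conclusion $d_\pi\geq 1/(4\varepsilon)$ implies the stated $d_\pi\geq\varepsilon^{-2/3}/100$ for all $\varepsilon$ (trivially when $\varepsilon\geq 1$). In fact your bound is sharper than the one quoted from Gowers; the weaker exponent $-2/3$ in the Fact reflects Gowers's original formulation in terms of graph quasirandomness, which under the translation of Corollary~\ref{qr_subsets_of_groups_2} costs a power, whereas you work directly with the Fourier-coefficient notion. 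So your argument is not only correct but delivers a stronger constant.
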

\begin{example}\label{sl2_example}
    If $q$ is a prime power, the group $H=\mathrm{SL}_2(\mathbf{F}_q)$ has no non-trivial irreducible representation of dimension $<(q-1)/2$, and thus every subset of it is $2q^{-1/2}$-quasirandom. Hence by Corollary \ref{qr_subsets_of_groups_2}, the graph $(H,H,xy^{-1}\in D)$ is $4q^{-1}$-quasirandom for every $D\subseteq H$.
\end{example}

On the opposite end of the spectrum, the paper \cite{green} proves a regularity theorem for arbitrary subsets of \textit{abelian} groups. The theorem statement is substantially simplified working in a `finite field' model, ie in a group of form $(\mathbb{Z}/p\mathbb{Z},+)^n$, where $p$ is a fixed prime number and $n$ is large. This case is treated in Section 2 of \cite{green}, where the following is proved; in Section 2 only the case $p=2$ is stated explicitly, but the same argument works for an arbitrary but fixed $p$. 

\begin{fact}\label{green_theorem}
    Fix any prime $p$ and $\varepsilon\in(0,1/2)$. Then there is $C>0$ such that the following holds. For any $n\in\mathbb{N}$, if $G=(\mathbf{F}_{p^n},+)$ and $D\subseteq G$ is an arbitrary subset, then there is a subgroup $H\leqslant G$ of index at most $C$ and such that, for all but $\varepsilon|G|$-many $g\in G$, the intersection $(g+D)\cap H$ is an $\varepsilon$-quasirandom subset of $H$.
\end{fact}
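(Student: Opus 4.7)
My plan would be to give an iterative Fourier-analytic (`energy-increment') argument, in the style of Green's proof in \cite{green}. Starting with $H_0=G$, I would build a decreasing chain $G=H_0\geqslant H_1\geqslant\cdots$ of subgroups of $G$, at each stage checking whether $H_i$ witnesses the conclusion; if not, I would use the failure to extract a character $\psi\in\widehat{G}\setminus H_i^{\perp}$ with $|\widehat{1_D}(\psi)|$ polynomially large in $\varepsilon$ (where $H_i^\perp=\{\psi\in\widehat{G}:\psi\mathord{\upharpoonright}_{H_i}\equiv 1\}$), and then refine by setting $H_{i+1}=H_i\cap\ker\psi$; so $[G:H_{i+1}]\leqslant p\cdot[G:H_i]$. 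The `energy' $\sum_{\psi\in H_i^{\perp}}|\widehat{1_D}(\psi)|^2$ is monotone non-decreasing by Parseval, each refinement would increment it by a polynomial in $\varepsilon$, and it is bounded above by $|D|/|G|\leqslant 1$; so the process must terminate after boundedly many stages, depending only on $p$ and $\varepsilon$.

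The key identity for the extraction would be $\widehat{1_{(g+D)\cap H_i}}^{H_i}(\pi)=\sum_{\psi}\overline{\psi(g)}\widehat{1_D}(\psi)$ for non-trivial $\pi\in\widehat{H_i}$, where the sum ranges over characters $\psi\in\widehat{G}$ restricting to $\pi$ on $H_i$. This follows from a direct character-orthogonality computation; averaging its squared modulus over $g\in G$ gives $\sum_{\psi\text{ ext }\pi}|\widehat{1_D}(\psi)|^2$, and summing over non-trivial $\pi$ then yields $\sum_{\psi\notin H_i^{\perp}}|\widehat{1_D}(\psi)|^2$. So if the conclusion fails for $H_i$, a Markov-type argument applied to the above would force this `tail' $L^2$ mass to be bounded below by a polynomial in $\varepsilon$, from which one then aims to produce an individual character with large Fourier coefficient to feed back into the refinement.

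The hard part is the last extraction step: an $L^2$ lower bound on the tail mass does not \emph{a priori} guarantee the existence of an individually large coefficient, since the mass could in principle be spread across many characters of small Fourier mass. In the abelian finite-field setting this is overcome by a dyadic bookkeeping on the size of Fourier coefficients: one adjoins to $H_i^{\perp}$ all characters above each of several decreasing thresholds simultaneously, controlling the count per threshold via Parseval and using that $[G:H_i]$ grows only multiplicatively (as a power of $p$) with the added characters. Combining this with the energy increment then yields a subgroup $H$ of $\varepsilon$-controlled index satisfying the conclusion.
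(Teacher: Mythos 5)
The paper does not prove this statement: it is quoted as a Fact, with the proof credited to Section 2 of \cite{green}, so there is no paper-internal argument to compare against. Your plan is to reconstruct Green's energy-increment proof, and the skeleton is right: the decreasing chain $H_0\geqslant H_1\geqslant\cdots$, the monotone energy $\sum_{\psi\in H_i^{\perp}}|\widehat{1_D}(\psi)|^2\leqslant 1$, and the identity $\widehat{1_{(g+D)\cap H_i}}^{H_i}(\pi)=\sum_{\psi\,\text{ext}\,\pi}\overline{\psi(g)}\widehat{1_D}(\psi)$ together with the $L^2$/Markov consequence are all correct. But the gap you flag at the end is genuine, and the fix you sketch does not close it. From failure at $H_i$ one only gets $\sum_{\psi\notin H_i^{\perp}}|\widehat{1_D}(\psi)|^2\gtrsim\varepsilon^3$, and this mass can sit on an unbounded number of individually tiny coefficients (for a pseudorandom $D$ every nonzero coefficient is $\approx|G|^{-1/2}$). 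Pigeonholing inside the key identity only produces a single $\psi$ with $|\widehat{1_D}(\psi)|\geqslant\varepsilon/[G:H_i]$, a bound that degrades as the chain deepens, so it cannot supply a fixed polynomial-in-$\varepsilon$ energy increment. The ``dyadic threshold'' idea does not help either: after adjoining all $\psi$ with $|\widehat{1_D}(\psi)|>\eta$ to $H_i^{\perp}$, the residual tail is controlled by the \emph{number} of sub-threshold characters, not by $\eta$ itself, and there is no choice of $\eta$ for which $[G:H]\leqslant p^{\eta^{-2}}$ and $[G:H]\cdot\eta<\varepsilon$ both hold.

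The missing idea in Green's argument is to refine by the \emph{locally} large characters, many at once. For each bad coset $y+H_i$ one picks a non-trivial $\chi_y\in\widehat{H_i}$ with $|\widehat{1_{(y+D)\cap H_i}}^{H_i}(\chi_y)|>\varepsilon$, and sets $H_{i+1}=H_i\cap\bigcap_y\ker\chi_y$. This can add up to $[G:H_i]$ characters in one step, so $[G:H_{i+1}]$ grows like $p^{[G:H_i]}$ and the index bound is tower-type in $\varepsilon^{-1}$ rather than polynomial. The point is that the conditional-variance (Pythagoras) computation inside each bad coset raises the mean-square density there by at least $\varepsilon^2$; since at least an $\varepsilon$-fraction of cosets are bad, the global energy increment is at least $\varepsilon^3$ per step, independently of $i$, and the process terminates in $O(\varepsilon^{-3})$ steps. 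In terms of your global picture, the characters newly entering $H_{i+1}^{\perp}$ are individually small (which is exactly why your extraction failed), but they have been chosen \emph{coherently} so that their total mass is $\geqslant\varepsilon^3$. Replacing the single-character/threshold extraction by this coset-by-coset refinement is the repair your sketch needs.
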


\subsection{Simple theories}
Let $T$ be a complete $L$-theory and let $\mathfrak{C}$ be a saturated model. Given an $L$-formula $\phi(x,y)$, a tuple $b\in\mathfrak{C}^{y}$, and a parameter set $A\subset\mathfrak{C}$, we say that a formula $\phi(x,b)$ divides over $A$ if there is some $k\in\omega$ and some sequence $(b_i)_{i\in\omega}\in\mathfrak{C}^y$ such that $b_i\equiv_A b$ for all $i\in\omega$ and such that, for any $i_1<\dots<i_k\in\omega$, the conjunction $\phi(x,b_{i_1})\wedge\dots\wedge\phi(x,b_{i_k})$ is inconsistent. We say that a partial type forks over $A$ if it implies a disjunction of finitely many formulas that each divide over $A$. Finally, given a set $A$ and sets or tuples $b,c$, we say that $b$ is non-forking-independent from $c$ over $A$, written $b\ind_A c$, if $\tp(b/A,c)$ does not fork over $A$.

`Simplicity' is a property of a first-order theory introduced in \cite{shelah_1} and \cite{shelah_2}. It was proved in \cite{kim} and \cite{kim_pillay} that the relation $\ind$ behaves well in simple theories, and that simplicity of a theory can be characterized by the good behavior of $\ind$. Rather than give the original definition of simplicity, we will instead give the above-mentioned characterization of it; for a thorough introduction to simple theories in general, see \cite{wagner}.

\begin{fact}\label{kim_pillay}
    If $T$ is simple, then the relation $\ind$ satisfies the following properties:
    \begin{enumerate}
        \item (Finite character:) $a\ind_C B$ iff $a\ind_C B_0$ for every finite subset $B_0$ of $B$.
        \item (Existence:) For all $a,b,C$, there is $a'\equiv_C a$ such that $a'\ind_Cb$.
        \item (Monotonicity and transitivity:) $b\ind_A (c,d)$ iff $b\ind_{A}c$ and $b\ind_{(A,c)}d$.
        \item (Symmetry:) $a\ind_C b$ iff $b\ind_C a$.
        \item (Local character:) There is a cardinal $\kappa$ such that, for all $a,B$, there is $B_0\subseteq B$ of size at most $\kappa$ such that $a\ind_{B_0}B$.
        \item (Independence theorem over models:) Let $M$ be a model, and suppose that $a,a',b,c$ are tuples such that $a\equiv_M a'$ and $a\ind_M b$ and $a'\ind_M c$ and $b\ind_M c$. Then there is $a''$ such that $a''\equiv_{(M,b)}a$ and $a''\equiv_{(M,c)}a'$ and $a''\ind_M(b,c)$.
    \end{enumerate}Conversely, if $\ind$ satisfies either 4 or 5, then $T$ is simple. Moreover, if there is any automorphism-invariant ternary relation $\ind'$ on $\mathfrak{C}$ satisfying all of the above properties, then $T$ is simple and $\ind'=\ind$.
\end{fact}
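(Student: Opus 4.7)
The plan is to approach the forward direction via Kim's theorem that dividing equals forking in simple theories, then derive the listed properties; the converse and canonicity follow from a characterization argument. Throughout I would use the notion of the \emph{tree property}: a formula $\phi(x,y)$ has the tree property over $A$ if there exist $k \in \omega$ and a tree $(b_\eta)_{\eta \in \omega^{<\omega}}$ of parameters such that every branch is $\phi$-consistent but any $k$-subset of siblings yields an inconsistent conjunction. Simplicity of $T$ means no formula has the tree property.

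First, the easy properties. Finite character, monotonicity and transitivity, and existence all follow essentially directly from the definition of dividing together with basic compactness and automorphism arguments. The heart of the forward direction is local character: if $\ind$ failed local character at some $(a,B)$, one could iteratively extract forking witnesses and, via a Ramsey/compactness argument on the resulting sequence, build a tree witnessing the tree property for some formula in $\tp(a/B)$, contradicting simplicity. Symmetry then follows from local character together with Kim's lemma (which says that in a simple theory, if $\phi(x,b)$ divides over $M$, then it divides with respect to every Morley sequence in $\tp(b/M)$): if symmetry failed, comparing Morley-sequence lengths on the two sides would yield a contradiction to local character.

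The main obstacle will be the independence theorem over models. Given $a \equiv_M a'$ with $a \ind_M b$, $a' \ind_M c$, and $b \ind_M c$, we want to produce a single $a''$ realizing both $\tp(a/Mb)$ and $\tp(a'/Mc)$ and satisfying $a'' \ind_M bc$. The proof uses a long Morley sequence $I$ in $\tp(a/M)$: by $M$-invariance of nonforking and indiscernibility of $I$, one extracts elements of $I$ matching both $a$ (over $Mb$) and $a'$ (over $Mc$), and then takes $a''$ to be the realization of the amalgamated type produced by compactness. Verifying $a'' \ind_M bc$ uses Kim's lemma once more. This step crucially uses that the base is a model, because that is what guarantees extensions of types to global $M$-invariant types, which power the amalgamation; over an arbitrary base the analogous amalgamation can fail.

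For the converse and canonicity, I would take any automorphism-invariant relation $\ind'$ satisfying either symmetry or local character (plus the remaining properties for the canonicity clause) and deduce first that $T$ is simple: a tree witnessing the tree property for some formula would produce a type violating symmetry or local character of $\ind'$, so the absence of such a witness implies simplicity. To conclude $\ind' = \ind$, one uses existence together with the independence theorem on the $\ind'$-side to show that both relations coincide on types over models with the class of types extending to a global $M$-invariant type, and then lifts agreement to arbitrary base sets via finite character and local character.
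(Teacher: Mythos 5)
The paper does not prove this statement: it is labeled a \emph{Fact} and is quoted directly from Kim \cite{kim} and Kim--Pillay \cite{kim_pillay}, with the text explicitly deferring to those sources. So there is no in-paper argument to compare your sketch against; the honest answer is that you have attempted to prove a theorem the paper takes as given.

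As for the sketch itself, it follows the broad contours of the standard treatment (Kim's thesis, Kim--Pillay, Wagner's book): forking equals dividing, local character from absence of the tree property, symmetry via Kim's lemma and Morley sequences, and the independence theorem via global $M$-invariant extensions, with the converse/canonicity deduced from the axiom package. Two cautions if you were to flesh this out. First, the statement's converse has two distinct claims that you partially conflate: (a) if the \emph{actual} relation $\ind$ (nonforking) satisfies just symmetry \emph{or} just local character, then $T$ is simple; (b) if \emph{some} automorphism-invariant ternary relation $\ind'$ satisfies \emph{all} the listed properties, then $T$ is simple and $\ind'=\ind$. For an abstract $\ind'$, symmetry or local character alone gives you essentially nothing about formulas; you need the full package, and in particular the independence theorem over models is what forces agreement with nonforking. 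Your phrase ``plus the remaining properties for the canonicity clause'' gestures at this but elides the fact that (a) and (b) are proved quite differently. Second, ``symmetry then follows from local character together with Kim's lemma'' understates the circularity one has to manage: in the standard development, Kim's lemma, symmetry, and transitivity are established in a carefully ordered bootstrap, and stating one as a consequence of the others without specifying the order risks an unfounded argument. None of this is fatal to a sketch, but these are the points a referee of a genuine proof would press on. Given the paper's usage, though, the appropriate move is simply to cite \cite{kim} and \cite{kim_pillay}, as the authors do.
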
 When working in a model of a simple theory, we will just say `the independence theorem' to refer to the sixth property above.

\subsection{Groups in simple theories}\label{groups_simple_thys_sec}
We will use many of the fundamental properties of groups definable in simple theories, as developed in \cite{pillay}. Suppose that $T$ is a simple theory, that $\mathfrak{C}$ is a saturated model of $T$ and that $G$ is a group definable in $\mathfrak{C}$, possibly with parameters. Recall that, for a parameter set $B\subset\mathfrak{C}$, $G^{00}_B$ denotes the smallest `bounded-index' subgroup of $G$ type-definable over $B$, where bounded-index means of index smaller than the degree of saturation of $\mathfrak{C}$.

\begin{definition}
    Let $\phi(x)$ be a formula with parameters concentrated on $G$. Then $\phi(x)$ is (left) f-generic if, for every small set $B$ over which $\phi(x)$ and $G$ are both defined, the formula $g\phi(x)$ does not fork over $B$ for all $g\in G(\mathfrak{C})$.
\end{definition}

As usual, we will say that a partial type is f-generic if it does not imply any non-f-generic formula. The properties of f-generic types for groups in simple theories were developed in \cite{pillay},\footnote[1]{In \cite{pillay}, the notion with the properties listed below is called genericity, but it was renamed to f-genericity in \cite{newelski_petrykowski}. The original definition given in \cite{pillay} is condition 5 below.} some of which we summarize below.

\begin{fact}\label{f-gen_facts}
\begin{enumerate}
    \item Right f-genericity and left f-genericity coincide, so in particular a partial type $\pi(x)$ is f-generic iff $\pi(x)^{-1}$ is.
    \item The non-f-generic formulas form a (proper) ideal, so an f-generic partial type can be extended to an f-generic complete type over any parameter set.
    \item Let $A$ be a small set of parameters over which $G$ is defined. Then a partial type is left f-generic iff no left translate of it forks over $A$.
    \item A complete type is f-generic iff some (all) of its non-forking extensions are.
    \item Fix a small set of parameters $A$ over which $G$ is defined, and let $B\supseteq A$. If $\tp(c/B)$ is f-generic, and $g\in G(\mathfrak{C})$ with $c\ind_B g$, then $gc\ind_A (B,g)$. (This follows from points 3 and 4.)
    \item Fix a small set of parameters $C$ over which $G$ is defined. If $\tp(a/C)$ and $\tp(b/C)$ are f-generic, and $a\ind_C b$, then $(a^{-1}b,a,b)$ is a pairwise $C$-independent triple, each element of which is f-generic over $C$. (This follows from points 1, 3, and 4.)
    \item Let $M$ be a small model over which $G$ is definable, and suppose $q\in S_G(M)$ is f-generic. If $g\in G^{00}_M$ and $\tp(g/M)$ is f-generic, then there exists some $b\models q$ with $b\ind_M g$ and $gb\models q$. (In the notation of \cite{pillay}, this says $g\in S(q)$.)
\end{enumerate}
\end{fact}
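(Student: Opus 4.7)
The plan is to derive all seven statements from the axioms of non-forking in Fact \ref{kim_pillay}, with the independence theorem providing most of the technical substance and each successive item bootstrapping off the earlier ones. Before addressing the numbered items I would first establish the existence of complete f-generic types concentrated on $G$ over any small parameter set: by local character applied to the partial type $\{x\in G\}$, there is a small $B$ over which no left translate $g\{x\in G\}$ forks, and extending to a global type and then amalgamating independent conjugates via the independence theorem produces a global f-generic type. Granting existence, item 2 follows by the usual argument that a disjunction of formulas, each failing f-genericity via some translate, can be combined by a generic amalgamation to produce a single non-f-generic witness.

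For items 1, 3, and 4, the key tool is a generic translation trick: replacing a given $g$ by some $g'\equiv_A g$ chosen independent from the relevant parameters, so that non-forking over a smaller base transports to non-forking over a larger one. For item 1, given a left f-generic $\tp(c/B)$ and any $g$, I would pick $h\in G$ generic and independent from $(c,g)$ over a suitable base and apply the independence theorem to the types of $hc$, $g$, and $c$ to conclude non-forking of the right translate $cg$. Item 3 is then a direct transport argument, and item 4 follows by combining item 3 with the translate characterization of f-genericity.

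Items 5, 6, and 7 are the genuinely group-theoretic consequences, all hinging on the following principle: if $a,b$ are $C$-independent and f-generic over $C$, then their product is f-generic over $C$ with controlled independence from each factor. For item 5 this is a direct application of the independence theorem to amalgamate $\tp(c/B)$ with a generic extension of $\tp(g/B)$ along the common base $B$; item 6 then extracts pairwise independence of $(a^{-1}b,a,b)$ from items 1, 3, and 4 together with symmetry of $\ind$. Item 7 is the subtlest: given $q\in S_G(M)$ f-generic and $g\in G^{00}_M$ generic, one applies the independence theorem to two independent realizations of $q$ and a generic extension of $\tp(g/M)$ to obtain a candidate $b$, using the condition $g\in G^{00}_M$ to ensure that the amalgamated translate $gb$ realizes $q$ itself rather than some other f-generic type.

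I expect item 7 to be the main obstacle. The essential difficulty is in identifying the stabilizer of the f-generic type $q$: one must show that the set of $g$ for which $g\cdot q=q$ contains $G^{00}_M$, which requires showing that this stabilizer is a type-definable subgroup of $G$ of bounded index over $M$. This is precisely the content of the $S(q)=G^{00}_M$-style analysis of f-generic types in simple theories, and it is what distinguishes the simple setting from the stable one, where one could instead appeal to definability of types.
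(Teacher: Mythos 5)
The paper states this as a Fact with a citation to \cite{pillay} (and the footnote to \cite{newelski_petrykowski}), giving no proof beyond the parenthetical hints for items 5--7, so your sketch is a from-scratch reconstruction rather than something to match against a proof in the paper. Its overall shape resembles the development in \cite{pillay}, but two steps do not hold up. The preliminary existence argument is vacuous as written: since $G$ is a group, $g\{x\in G\}=\{x\in G\}$ for every $g$, so the observation that no left translate of the partial type $\{x\in G\}$ forks over a small $B$ is trivially true and gives no traction. The standard route is the reverse of what you propose: one first proves the ideal property (your item 2) via the independence-theorem amalgamation you describe, and existence of complete f-generic types then follows by compactness because the ideal is proper. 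Separately, item 5 does not need the independence theorem at all; as the paper's parenthetical indicates, it is a two-line consequence of items 3 and 4: since $c\ind_B g$, $\tp(c/B,g)$ is a non-forking extension of the f-generic $\tp(c/B)$, hence f-generic by item 4, and item 3 over the base $A$ then says the left translate by $g$ of $\tp(c/B,g)$ does not fork over $A$, which is exactly $gc\ind_A(B,g)$.

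The substantive gap is in item 7, which you correctly flag as the crux but do not resolve. Knowing that a stabilizer built from $q$ is a type-definable bounded-index subgroup (hence contains $G^{00}_M$) does not yield $g\in S(q)$, because $S(q)=\{g:\exists b\models q,\ b\ind_M g,\ gb\models q\}$ is not a priori a subgroup; what Pillay's analysis gives directly is that a product set of the form $S(q)\cdot S(q)$ (or $S(q)S(q)^{-1}$) is the type-definable subgroup $G^{00}_M$, and extracting membership of every f-generic $g\in G^{00}_M$ in $S(q)$ itself requires a genuine further argument. Your sketch (amalgamate two independent realizations of $q$ with an extension of $\tp(g/M)$) does not explain why the resulting product realizes $q$ rather than some other f-generic type concentrated on the same coset, and that is the whole difficulty. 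In fact item 7 is essentially the instance $p=\tp(g/M)$, $r=q$ of Fact~\ref{pq=r} in this paper (from \cite{pillay_scanlon_wagner}): that lemma produces $g'\equiv_M g$ and $b\models q$ with $g'b\models q$ and $(g',b,g'b)$ pairwise $M$-independent, and applying an $M$-automorphism sending $g'$ to $g$ (which fixes $q$, as $q$ is over $M$) yields the required realization. Without that input, or an equivalent explicit stabilizer computation, item 7 does not follow from the independence theorem alone.
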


Now the following is proved in Proposition 2.2 in \cite{pillay_scanlon_wagner}; it is only stated there in the case $\mathfrak{c}=\mathfrak{d}=G^{00}_M$, but the same proof works in general – see for instance \cite{martin-pizarro_pillay}. Note that, since $G^{00}_M$ is bounded-index and type-definable over $M$, every coset of $G^{00}_M$ is also type-definable over $M$.\footnote{More generally, if $\mathfrak{C}$ is a saturated model of any theory and $E(x,y)$ is a bounded equivalence relation on $\mathfrak{C}^x$ type-definable over some small model $M\prec\mathfrak{C}$, then every coset of $E$ is type-definable over $M$.} Thus in particular every type in $S_G(M)$ is concentrated on some coset of $G^{00}_M$ and every coset of $G^{00}_M$ is concentrated on by some type in $S_G(M)$.

\begin{fact}\label{pq=r}
Let $p,q,r\in S_G(M)$ be f-generic types of $G$ such that, if $\mathfrak{c},\mathfrak{d}$ are the respective cosets of $G^{00}_M$ on which $p,q$ are concentrated, then $r$ is concentrated on $\mathfrak{c}\mathfrak{d}$. Then there are $a\models p$ and $b\models q$ such that $ab\models r$ and such that $(a,b,ab)$ is pairwise $M$-independent.
\end{fact}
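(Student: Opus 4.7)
My approach is via the independence theorem (Fact \ref{kim_pillay}(6)) together with the key properties of f-generic types collected in Fact \ref{f-gen_facts}.

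The first ingredient is an easy application of Fact \ref{f-gen_facts}(5) that produces a pairwise-independent triple of the correct shape, but with the wrong third type. Using existence (Fact \ref{kim_pillay}(2)) I would take realizations $a_0\models p$ and $b_0\models q$ with $a_0\ind_M b_0$. Applying Fact \ref{f-gen_facts}(5) to the f-generic $\tp(b_0/M)$ with left-multiplication by $a_0$, and, via Fact \ref{f-gen_facts}(1), to $\tp(a_0/M)$ with right-multiplication by $b_0$, I get $a_0b_0\ind_M a_0$ and $a_0b_0\ind_M b_0$. Hence $(a_0,b_0,a_0b_0)$ is pairwise $M$-independent, with $a_0b_0$ f-generic over $M$ and concentrated on $\mathfrak{c}\mathfrak{d}$. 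This yields a triple for $(p,q,r_0)$ for some f-generic $r_0\in S_G(M)$ concentrated on $\mathfrak{c}\mathfrak{d}$; the remaining task is to transfer from $r_0$ to the given $r$.

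For the transfer, I would take $c\models r$ with $c\ind_M(a_0,b_0)$, and consider the element $g=(a_0b_0)^{-1}c\in G^{00}_M$. Fact \ref{f-gen_facts}(6) applied to the independent f-generic elements $a_0b_0$ and $c$ shows that $g$ is f-generic over $M$. To produce the desired $(a,b)\models(p,q)$ with $a\ind_M b$ and $ab=c$, I would form a second pairwise-independent triple $(a_1,b_1,c)$ by choosing $a_1\models p$ with $a_1\ind_M c$ and setting $b_1=a_1^{-1}c$ (so that $b_1$ realizes some auxiliary f-generic $q_1$ on $\mathfrak{d}$), and then apply the independence theorem over $M$ to amalgamate $(a_0,b_0,a_0b_0)$ and $(a_1,b_1,c)$ along the common realization $a_0\equiv_M a_1$ of $p$. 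This produces a single $a\models p$ that fits both contexts, and Fact \ref{f-gen_facts}(7) applied to the f-generic element $g\in G^{00}_M$ (or an appropriate conjugate by $b_0$, using normality of $G^{00}_M$) provides a realization of $q$, independent from $a$, giving the correct product $c$.

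The main obstacle is precisely the reconciliation between the multiplicative relation $ab=c$ and the type-amalgamation provided by the independence theorem, since the latter is blind to the group operation: applied naively, the independence theorem produces $a\models p$, $b\models q$, $c\models r$ satisfying all the desired pairwise independences but with $ab\neq c$ in general. The critical algebraic input is Fact \ref{f-gen_facts}(7), which says that every f-generic element of $G^{00}_M$ left-stabilizes every f-generic type $q$ in the sense that there exists some $b\models q$ with $b\ind_M g$ and $gb\models q$. This gives exactly the freedom needed to slide the product $a_0b_0$ through the f-generic element $g$ to land on $c$ while keeping the types of the factors intact, closing the multiplicative gap and yielding the desired $(a,b,ab)\models(p,q,r)$ with pairwise $M$-independence.
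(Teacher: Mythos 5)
The paper does not actually prove this Fact; it cites Proposition 2.2 of Pillay--Scanlon--Wagner (and Mart\'in-Pizarro--Pillay for the general coset version), so there is no in-paper proof to compare against. Evaluating your argument on its own terms, there is a genuine gap at the final step.

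Your first two paragraphs are sound: $(a_0,b_0,a_0b_0)$ is a pairwise $M$-independent triple realizing $(p,q,r_0)$ for some f-generic $r_0$ on $\mathfrak{c}\mathfrak{d}$; taking $c\models r$ with $c\ind_M(a_0,b_0)$, the element $g=(a_0b_0)^{-1}c$ lies in $G^{00}_M$ and is f-generic; and the auxiliary triple $(a_1,a_1^{-1}c,c)$ realizes $(p,q_1,r)$ for some f-generic $q_1$ on $\mathfrak{d}$. The independence theorem, applied to $a_0\equiv_M a_1$ over the bases $b_0$ and $c$ (which are $M$-independent), does produce $a\models p$ with $a\equiv_{(M,b_0)}a_0$, $a\equiv_{(M,c)}a_1$, and $a\ind_M(b_0,c)$. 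But this locks in $\tp(a,c/M)=\tp(a_1,c/M)$, and hence the unique element $b$ with $ab=c$, namely $b=a^{-1}c$, satisfies $b\equiv_M a_1^{-1}c\models q_1$ --- not $q$. The amalgamation gives you a pairwise independent triple realizing $(p,q_1,r)$, which is exactly the shape of triple you already had before amalgamating; no progress toward hitting $q$ has been made.

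The invocation of Fact \ref{f-gen_facts}(7) in the last sentence does not close this gap. Fact 7 produces \emph{some} realization $b'\models q$ with $b'\ind_M g$ and $gb'\models q$; it does not assert that the particular $b_0$ you have in hand satisfies $b_0g\models q$, nor that $q_1=q$. Since the product constraint $ab=c$ determines $b$ once $a$ and $c$ are fixed, you cannot ``re-choose'' $b$ to land on $q$ without also re-choosing $a$ --- and any such re-choice reintroduces exactly the discrepancy element you were trying to absorb, since $g$ (or any conjugate of it) is itself a function of $(a_0,b_0,c)$. This is the circularity you flag as ``the main obstacle,'' and the sketch as written does not resolve it. A correct proof must control all three types simultaneously, and the stabilizer property of Fact 7 (which only says that generic elements of $G^{00}_M$ fix a \emph{single} f-generic type generically) is not enough on its own to transport an auxiliary $q_1$ to the prescribed $q$.
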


\subsection{S1 theories}\label{s1_section}
We will need in this paper the compatibility of the results from Section \ref{groups_simple_thys_sec} with the behaviour of dimension (in the sense of algebraic closure) in pseudofinite fields. The key things we need are that (i) the theory of pseudofinite fields is simple, (ii) dimension-independence coincides with nonforking independence, and (iii) if $G$ is a definable group, then the types in $G$ of maximal dimension coincide with f-generic types. Points (i) and (ii) are well-known, and were proved in \cite{kim_pillay} using Fact \ref{kim_pillay} and results from \cite{hrushovski_pillay}.\footnote[1]{Note that \cite{hrushovski_pillay} was written before the general theory of simple theories was developed in \cite{kim} and \cite{kim_pillay}, and forking and dividing do not explicitly appear anywhere in \cite{hrushovski_pillay} other than in the context of local stability.} It was also mentioned in \cite{kim_pillay} that a more direct method of proving simplicity would be desirable. For completeness, we take the liberty here to give such a direct account of (i) and (ii), along with (iii). The proofs are completely routine. See also Corollary 3.9 of \cite{elwes_macpherson}.

Let $T$ be a complete one-sorted $L$-theory and let $\mathfrak{C}$ be a saturated model of $T$. Recall from Definition 2.1 of \cite{hrushovski_pillay} that $T$ is said to be `geometric' if (i) algebraic closure defines a pregeometry on $\mathfrak{C}$, and (ii) $T$ has `elimination of $\exists^\infty$': for any $L$-formula $\phi(x,y)$, there is $n_\phi$ such that, for any $b\in\mathfrak{C}^{|y|}$, $\phi(\mathfrak{C}^{|x|},b)$ is finite if and only if it has size at most $n_\phi$. For a definable set $X$, we let $\dim(X)$ denote the acl-dimension of $X$. If $T$ is geometric, $a$ is a finite tuple, and $C$ is a parameter set, we let $\dim(a/C)$ be the length of a maximal $\mathrm{acl}_C$-independent subtuple of $a$. For a $C$-definable set $X$ we let $\dim(X)=\max\{\dim(a/C):a\in X\}$; this is well-defined, and $\dim(a/C)=\min\{\dim(X):X\ni a,X\text{ is }C\text{-definable}\}$. Finally, given finite tuples $a,b$, we say that $a$ is dimension-independent from $b$ over $C$ if $\dim(a/C,b)=\dim(a/C)$, and we say that a sequence $a_1,\dots,a_n$ is dimension-independent over $C$ if $a_i$ is dimension-independent from $\{a_j:j\neq i\}$ over $C$.

Now the following notion is defined in \cite{hrushovski_pillay}:
\begin{definition}
    A geometric theory $T$ has property S1 if one cannot find a definable set $X$, a formula $\phi(x,y)$, and tuples $b_i\in\mathfrak{C}^{|y|}$, such that: (i) $\phi(x,b_i)\to x\in X$ for all $i\in\omega$, (ii) $\dim(\phi(x,b_i))=\dim(X)$ for all $i\in\omega$, and (iii) $\dim(\phi(x,b_i)\wedge\phi(x,b_j))<\dim(X)$ for all $i\neq j$.
\end{definition}

\begin{lemma}
    Suppose $T$ is a geometric theory with property S1. Then dimension-independence and dividing-independence coincide in $\mathfrak{C}$. In particular $T$ is supersimple of SU-rank $1$.
\end{lemma}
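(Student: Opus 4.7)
The plan is to verify that dimension-independence, call it $\ind^{\dim}$, is an automorphism-invariant ternary relation satisfying all six properties of Fact \ref{kim_pillay}; the uniqueness clause at the end of that fact then yields simultaneously that $T$ is simple and that the usual non-forking independence $\ind$ equals $\ind^{\dim}$. The particular conclusion about SU-rank then comes essentially for free from the boundedness of dimension.

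Five of the six properties are essentially formal consequences of the pregeometry axioms for $\mathrm{acl}$ combined with elimination of $\exists^\infty$, and I would handle them first. \emph{Finite character} and \emph{local character} are immediate because $\dim(a/B)$ is a natural number bounded by $|a|$, and in particular is realized by some finite subtuple of $B$. \emph{Symmetry} and \emph{monotonicity--transitivity} follow from additivity of dimension, $\dim(ab/C) = \dim(a/Cb) + \dim(b/C)$, which is a standard consequence of the exchange property for $\mathrm{acl}$. For \emph{existence} one uses compactness: the partial type $\tp(a/C) \cup \{\neg\chi(x,b) : \chi \text{ over } Cb \text{ with } \dim\chi < \dim(a/C)\}$ is consistent, since otherwise some $\phi \in \tp(a/C)$ would be covered by finitely many $Cb$-definable sets of dimension $<k := \dim(a/C)$, forcing $\dim\phi < k$ and contradicting $a \models \phi$ (using the general fact that $\dim$ of a finite union is the max of the dimensions).

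The remaining and substantial step is the \emph{independence theorem} over models, and this is the one place where S1 is essential; I expect it to be the main obstacle. Given $M \prec \mathfrak{C}$, $a \equiv_M a'$, $a \ind^{\dim}_M b$, $a' \ind^{\dim}_M c$, and $b \ind^{\dim}_M c$, I must produce $a''$ satisfying $\tp(a/Mb)$ and $\tp(a'/Mc)$ simultaneously with $a'' \ind^{\dim}_M (b,c)$. By the same compactness argument as in existence, this reduces to showing that whenever $\phi(x,b) \in \tp(a/Mb)$ and $\psi(x,c) \in \tp(a'/Mc)$ both have dimension $k := \dim(a/M)$ and both imply $x \in X$ for a fixed $M$-definable $X$ with $\dim X = k$, the intersection $\phi(x,b) \wedge \psi(x,c)$ again has dimension $k$. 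My plan would be to build, using existence and Ramsey, a dim-independent and $Mc$-indiscernible sequence $(b_i)_{i<\omega}$ with $b_0 = b$, so that each $\phi(x,b_i)$ is a dimension-$k$ subset of $X$; a hypothetical dimension drop in $\phi(x,b) \wedge \psi(x,c)$ would then, by $Mc$-indiscernibility, propagate to the same dimension drop in every $\phi(x,b_i) \wedge \psi(x,c)$, and combined with the generic behaviour of the $\phi(x,b_i)$ this is engineered to exhibit a family of dimension-$k$ subsets of an appropriate $M$-definable base set whose pairwise intersections all have dimension $<k$---precisely the configuration that S1 forbids. The technically delicate point is to set up this S1 violation correctly: choosing the ``base set'' and the right family of top-dimensional subsets so that both the dimension-$k$ hypothesis and the dimension-$<k$ intersection hypothesis of S1 are simultaneously exhibited.

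Finally, the ``in particular'' clause follows at once: $\dim(a/C)$ is a non-negative integer bounded by $|a|$, so every type has finite SU-rank and $T$ is supersimple; and for a singleton $a$ we have $\dim(a/C) \in \{0, 1\}$, with $\dim(a/C) = 0$ iff $a \in \mathrm{acl}(C)$, so every forking extension of a non-algebraic singleton type is already algebraic, giving SU-rank exactly $1$.
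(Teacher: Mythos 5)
Your strategy --- verify the six Kim--Pillay axioms for $\ind^{\dim}$ and invoke the uniqueness clause of Fact \ref{kim_pillay} --- is genuinely different from the paper's. The paper instead proves directly that $\tp(a/Cb)$ divides over $C$ if and only if $\dim(a/Cb)<\dim(a/C)$: the forward implication is a short dimension-counting pigeonhole argument showing that $n+1$ dim-independent conjugates of $r(x,b)$ are jointly inconsistent, and the converse is where S1 enters, via a $C$-indiscernible sequence witnessing $k$-inconsistency. Supersimplicity then follows immediately from finite local character of dimension, and the independence theorem comes for free \emph{afterwards} as a general fact about simple theories. Your route, by contrast, requires proving the independence theorem for $\ind^{\dim}$ before knowing the theory is simple, and that is exactly the step your proposal does not finish.

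The gap there is real and is not a routine completion. You build a dim-independent, $Mc$-indiscernible sequence $(b_i)$ with $b_0=b$ and observe that a hypothetical dimension drop of $\phi(x,b)\wedge\psi(x,c)$ propagates to every $\phi(x,b_i)\wedge\psi(x,c)$. But S1 forbids a family of \emph{full}-dimensional subsets of a fixed $M$-definable set whose \emph{pairwise} intersections drop dimension. What your configuration gives is only that each $\phi(x,b_i)$ has small intersection with the single fixed set $\psi(x,c)$; for a dim-independent sequence there is no reason for $\phi(x,b_i)\wedge\phi(x,b_j)$ to be small when $i\neq j$, so the family you describe is not an S1 violation, and the sets $\phi(x,b_i)\wedge\psi(x,c)$ themselves cannot serve as the S1 family either since they are the ones hypothesized to have dropped dimension. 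Filling this hole essentially amounts to first establishing the dividing--dimension equivalence, which is the paper's actual lemma. The five formal axioms and the ``in particular'' clause are handled correctly.
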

\begin{proof}
    Fix a finite tuple $a$ and parameter sets $C\subseteq B$. We will show that $\dim(a/B)<\dim(a/C)$ iff $\tp(a/B)$ divides over $C$. It suffices to show this in the case where $B=(C,b)$ for some finite tuple $b$. Let $r(x,y)=\tp(a,b/C)$.

    In one direction, suppose $\dim(a/C,b)<\dim(a/C)$. By exchange, also $\dim(b/C,a)<\dim(b/C)$. Let $n=\dim(a/C)$. We claim that, for any dimension-independent $b_0,\dots,b_n$ realizing $\tp(b/C)$, $\bigwedge_{i=0}^nr(x,b_i)$ is inconsistent. Otherwise let $a'$ realize it. Then 
    \begin{align*}\dim(b/C)(n+1)&=\dim(b_0,\dots,b_n/C)\\&\leqslant\dim(a',b_0,\dots,b_n/C)\\&= \dim(a'/C)+\dim(b_0,\dots,b_n/C,a')\\&\leqslant n+\sum_{i=0}^n\dim(b_i/C,a')\\&\leqslant n+\dim(b/C,a)(n+1),
    \end{align*}contradicting that $\dim(b/C,a)\leqslant\dim(b/C)-1$. So indeed $\bigwedge_{i=0}^nr(x,b_i)$ is inconsistent for any dimension-independent tuple $(b_0,\dots,b_n)$ of realizations of $\tp(b/C)$, and hence $r(x,b)$ divides over $C$.

    In the other direction, suppose that $r(x,b)$ divides over $C$, and fix an $L(C)$-formula $\phi(x,y)\in r(x,y)$ such that $\phi(x,b)$ divides over $C$. Let $n=\dim(a/C)$ and let $\psi(x)$ be an $L(C)$-formula with $a\models\psi(x)$ and $\dim(\psi(x))=n$. Replacing $\phi(x,y)$ with $\phi(x,y)\wedge\psi(x)$, we may assume that $\phi(x,b)\to\psi(x)$. We claim that $\dim(\phi(x,b))<n$, which will show that $\dim(a/C,b)<\dim(a/C)$ and give the desired result. Suppose otherwise that $\dim(\phi(x,b))=n$.

    Since $\phi(x,b)$ divides over $C$, there is a $C$-indiscernible sequence $(b_i)_{i\in\omega}$ with $b_0=b$ and such that $\{\phi(x,b_i):i\in\omega\}$ is $k$-inconsistent. In particular, the intersection of any $k$ of the $\phi(x,b_i)$ is empty, and since $\dim(\phi(x,b_i))=n$ for all $i$ there must be some $k'<k$ maximal such that $\phi(x,b_1)\wedge\dots\wedge\phi(x,b_{k'})$ has dimension $n$. But then if we let $c_i=(b_{ik'+1},\dots,b_{(i+1)k'})$ and let $\theta(x,c_i)=\phi(x,b_{ik'+1})\wedge\dots\wedge \phi(x,b_{(i+1)k')})$, then $\dim(\theta(x,c_i))=\dim(\psi(x))$ for all $i$, $\theta(x,c_i)\to\psi(x)$ for all $i$, and $\dim(\theta(x,c_i)\wedge\theta(x,c_j))<\dim(\psi(x))$ for all $i\neq j$, contradicting property S1.

    So $\tp(a/B)$ divides over $C\subseteq B$ iff $\dim(a/B)<\dim(a/C)$. It follows that every complete type does not divide over a finite subset of its domain, so $T$ is supersimple, and it has SU-rank $1$ since $\dim(x=x)=1$.
    \end{proof}

Now let us turn to definable groups in S1 theories. First we note the group version of the previous lemma:
\begin{lemma}\label{f-gen_iff_max_dim}
    Suppose $T$ is a geometric theory with property S1, and let $G$ be a $\varnothing$-definable group in $T$. Then a formula concentrating on $G$ is f-generic iff it has dimension $\dim(G)$.
\end{lemma}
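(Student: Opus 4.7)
The plan is to read off both directions from the dimension--forking correspondence established in the preceding lemma, using that left translation and inversion are definable bijections on $G$ (and so preserve dimension) and that $\ind$ is symmetric. Fix a small parameter set $B$ over which $\phi$ is defined; since $G$ is $\varnothing$-definable, this is a valid parameter set for the f-generic condition.

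For the ``$\Leftarrow$'' direction, I would assume $\dim(\phi) = \dim(G)$. Given any $g \in G(\mathfrak{C})$, translation-invariance of dimension gives $\dim(g\phi) = \dim(G)$, so by saturation we can pick $a \models g\phi$ with $\dim(a/B,g) = \dim(G)$. Since dimension is non-increasing in the parameter set, $\dim(a/B) = \dim(G) = \dim(a/B,g)$, and the preceding lemma gives $a \ind_B g$; hence $g\phi$ does not fork over $B$, as required.

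For the ``$\Rightarrow$'' direction, suppose $\phi$ is f-generic and set $d := \dim(\phi)$; assume for contradiction $d < \dim(G)$. By saturation, pick $g \in G(\mathfrak{C})$ with $\dim(g/B) = \dim(G)$. F-genericity furnishes $a \models g\phi$ with $a \ind_B g$, which by symmetry and the dimension--forking correspondence rephrases as $\dim(g/B,a) = \dim(g/B) = \dim(G)$. On the other hand, $a \in g\phi$ is equivalent to $g \in a\phi^{-1}$, and since inversion preserves dimension, $a\phi^{-1}$ is a $B\cup\{a\}$-definable set of dimension $d$ containing $g$; therefore $\dim(g/B,a) \leq d < \dim(G)$, a contradiction.

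The one step that takes a bit of thought---and which I expect to be the main obstacle---is the dual rewriting $a \in g\phi \iff g \in a\phi^{-1}$ in the converse direction. Trying to bound $\dim(a/B)$ directly leads nowhere, since a low-dimensional realization $a$ of $g\phi$ could, a priori, be independent from $g$. It is only by switching viewpoint to bound $\dim(g/B,a)$ that one actually exploits the genericity of $g$ and closes the argument.
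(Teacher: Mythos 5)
Your proof is correct, and in the harder direction (``f-generic $\Rightarrow$ full dimension'') it takes a genuinely different and arguably more elementary route than the paper. The paper extracts an f-generic realization $a$ of the given set via Fact~\ref{f-gen_facts}(2), picks a generic $g$ independent from $a$, and then invokes Fact~\ref{f-gen_facts}(5) to deduce $ga \ind_C g$, from which the dimension contradiction follows by examining $\dim(ga/C)$. You instead start from a generic $g$, use the \emph{definition} of f-genericity to get $a \models g\phi$ with $a\ind_B g$, apply only symmetry of $\ind$, and then pivot to the set $a\phi^{-1}$ via the observation $a\in g\phi \iff g\in a\phi^{-1}$ to bound $\dim(g/B,a)$ directly. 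This bypasses Fact~\ref{f-gen_facts}(5) entirely, so the argument rests on nothing beyond the preceding lemma's dimension--dividing equivalence, symmetry, and the extension property for non-forking, which is a cleaner dependency footprint. In the other direction the two proofs are essentially contrapositives of one another and use the same content: the paper argues ``not f-generic $\Rightarrow$ some translate divides $\Rightarrow$ dimension drops,'' while you argue ``full dimension $\Rightarrow$ each translate has an independent realization $\Rightarrow$ no translate forks''; both reduce to the same direction of the preceding lemma. One small presentational caveat: f-genericity quantifies over \emph{all} small parameter sets over which $\phi$ and $G$ are defined, so in the ``$\Leftarrow$'' direction you should be explicit that $B$ was arbitrary (your argument does apply to every such $B$, so this is only a wording matter).
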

\begin{proof}
    The proof is essentially identical to the previous lemma. Fix a definable set $X\subseteq G$, and let $C$ be a small parameter set over which $G,X$ are both defined. Since $T$ is simple by the previous lemma, we may use Fact \ref{f-gen_facts}.
    
    The proof of the previous lemma shows that, if $C$ is a parameter set, $\psi(x),\phi(x,y)$ are $L(C)$-formulas, and $\phi(x,b)$ is an instance of $\phi$ that divides over $C$ and implies $\psi(x)$, then $\dim(\phi(x,b))<\dim(\psi(x))$. On the other hand, if $X$ is not f-generic, then there is some $g\in G$ such that $x\in gX$ divides over $C$, which hence forces $\dim(gX)<\dim(G)$. But $\dim(X)=\dim(gX)$ so $\dim(X)<\dim(G)$ as needed.
    
    In the other direction, suppose $\dim(X)<\dim(G)$, and suppose for contradiction that $X$ is f-generic. Then there is $a\in X$ such that $\tp(a/C)$ is f-generic. Pick any $g\in G$ with $\dim(g/C,a)=\dim(G)$; by the previous lemma, $g\ind_C a$, so that $a\ind_C g$. By Fact \ref{f-gen_facts}(5), we have $ga\ind_C g$, so again by the previous lemma $\dim(ga/C)=\dim(ga/C,g)$. But $\tp(ga/C,g)$ contains the formula $x\in gX$, and $\dim(gX)=\dim(X)<\dim(G)$, so that $\dim(ga/C)<\dim(G)$. This contradicts that $\dim(g/C,a)=\dim(G)$.
\end{proof}

We will also use crucially the following fact, which is Lemma 6.1 in \cite{hrushovski_pillay}.

\begin{fact}\label{g0=g00}
    Suppose that $T$ is a geometric theory with property S1, that $M\prec\mathfrak{C}$ is a small model, and that $G$ is a $M$-definable group of $T$. Any subgroup of $G$ type-definable over $M$ is an intersection of $M$-definable subgroups of $G$. In particular, $G^{00}_M$ is an intersection of $M$-definable finite-index subgroups of $G$.
\end{fact}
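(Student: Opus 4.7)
The plan is, given $H\leqslant G$ type-definable over $M$ with $n:=\dim H$, to produce for each $M$-definable $\phi\subseteq G$ containing $H$ with $\dim\phi=n$ an $M$-definable subgroup $K_\phi$ of $G$ containing $H$, so that $H=\bigcap_\phi K_\phi$. Specifically I set
\[
K_\phi:=\{g\in G:\dim(g\phi\triangle\phi)<n\},
\]
which is $M$-definable by the uniform definability of dimension in families, a standard feature of geometric theories.

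First I would check that each such $K_\phi$ is a subgroup containing $H$. Closure under inversion follows from $g^{-1}\phi\triangle\phi=g^{-1}(\phi\triangle g\phi)$ together with left-invariance of dimension; closure under multiplication follows from the general set-theoretic inclusion
\[
g_1g_2\phi\triangle\phi\subseteq g_1(g_2\phi\triangle\phi)\cup(g_1\phi\triangle\phi),
\]
each term having dimension $<n$. For $H\subseteq K_\phi$: if $h\in H$ then $h\phi\cap\phi\supseteq hH=H$, which has dimension $n$, so $\dim(h\phi\triangle\phi)<n$ since $h\phi\cup\phi$ has dimension $n$.

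Next, given $g\in G\setminus H$, I would produce some $\phi$ with $g\notin K_\phi$. Since $H$ is type-definable over $M$ and $gH$ is type-definable over $Mg$ and $H\cap gH=\emptyset$, compactness applied to the combined inconsistent partial type yields an $M$-definable $D\supseteq H$ with $D\cap gH=\emptyset$. Separately, since $\{x:\dim(x/M)>n\}$ is type-definable over $M$ and disjoint from $H$, compactness yields an $M$-definable $Y\supseteq H$ with $\dim Y\leqslant n$. Setting $X:=D\cap Y$ gives an $M$-definable set containing $H$, of dimension exactly $n$, and disjoint from $gH$. Since $H\subseteq X$, it follows that $gH\subseteq gX\setminus X$, so $\dim(gX\setminus X)\geqslant\dim gH=n$, whence $g\notin K_X$. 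The `in particular' clause follows because, when $H=G^{00}_M$, every $M$-definable subgroup containing $H$ has bounded index, which in a saturated model forces finite index. The main subtlety lies in isolating the separator $D$ disjoint from $gH$, since $gH$ is type-definable only over $Mg$ rather than $M$; beyond that, the argument reduces to a routine dimension calculation combined with definability of dimension in families.
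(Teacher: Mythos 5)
Your verification that $K_\phi$ is a subgroup (closure under inversion and the inclusion $g_1g_2\phi\triangle\phi\subseteq g_1(g_2\phi\triangle\phi)\cup(g_1\phi\triangle\phi)$) is fine, as is the compactness construction of an envelope $X$ disjoint from $gH$ with $g\notin K_X$, including the care you take over $gH$ only being type-definable over $Mg$. The gap is the claim $H\subseteq K_\phi$. The inference ``$\dim(h\phi\cap\phi)=n$ and $\dim(h\phi\cup\phi)=n$, hence $\dim(h\phi\triangle\phi)<n$'' is not valid — dimension does not satisfy inclusion-exclusion — and the claim is genuinely false for a general envelope $\phi$. For instance, in a pseudofinite field $\mathfrak{C}$ take $G=(\mathfrak{C}^2,+)$, $H=\mathfrak{C}\times\{0\}$ and $\phi=H\cup(\{0\}\times\mathfrak{C})$; then for $h=(1,0)\in H$ one has $(h+\phi)\triangle\phi\supseteq\{1\}\times(\mathfrak{C}\setminus\{0\})$, which has dimension $1=n$, so $h\notin K_\phi$. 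Thus $\bigcap_\phi K_\phi$ taken over all $n$-dimensional envelopes is a \emph{proper} subgroup of $H$, and the identity $H=\bigcap_\phi K_\phi$ fails.

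Tellingly, you never invoke the S1 hypothesis, and this is exactly where it is needed. To make the stabilizer approach work one must restrict attention to envelopes $\phi$ all of whose $M$-generic types are concentrated on $H$: for such a $\phi$, if $a\in\phi\setminus h\phi$ had $\dim(a/M,h)=n$, then $\dim(a/M)=n$, so $a\in H$, so $h^{-1}a\in H\subseteq\phi$, contradicting $a\notin h\phi$; this does give $H\subseteq K_\phi$. But one must then show such ``good'' envelopes exist cofinally among $M$-definable sets containing $H$ — in particular, that your separating set $X$ can be refined to one of this form — and that is where S1 (controlling the dimension-$n$ pieces of a definable set) has to be brought to bear. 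Without that step the argument does not close.
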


\subsection{Pseudofinite fields}\label{psf_section}
Recall that a pseudofinite field is an infinite field elementarily equivalent (in the language of rings) to some ultraproduct of finite fields. In \cite{ax}, Ax gave a first-order axiomatization for the class of pseudofinite fields, which has since proved an extremely rich area of study in model theory. In light of the previous section, we first quote the following, which are noted in Proposition 2.11 and Proposition 2.18 of \cite{hrushovski_pillay} as consequences of the results of \cite{chatzidakis_vdd_macintyre}:

\begin{fact}
    The complete theory of any pseudofinite field is geometric and S1. In particular, all of the facts from Section \ref{s1_section} apply to pseudofinite fields.
\end{fact}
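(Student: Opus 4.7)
The plan splits into the two assertions separately: geometricity and property S1. In both cases the engine is the Chatzidakis--van den Dries--Macintyre asymptotic analysis of definable sets in pseudofinite fields, and with that analysis in hand the model-theoretic arguments are soft.

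For geometricity, I would first check that model-theoretic algebraic closure in a pseudofinite field $F$ coincides with the relative field-theoretic algebraic closure $F\cap k^{\mathrm{alg}}$ of the subfield $k$ generated by the parameters. The key point is that pseudofinite fields are perfect and possess exactly one algebraic extension of each finite degree, so any element algebraic over $k$ has a finite $\mathrm{Aut}(F/k)$-orbit inside $F$, while any finite orbit is necessarily field-algebraic. Transcendence theory then supplies the pregeometry axioms. Elimination of $\exists^{\infty}$ is the dichotomy half of the CvdDM counting theorem: for each $L_{\mathrm{ring}}$-formula $\phi(x,y)$ there is a uniform $n_\phi$ such that on every finite field $\mathbf{F}_q$, each instance $\phi(x,b)$ is either of size at most $n_\phi$ or grows like a positive constant times $q^{\dim\phi(x,b)}$.

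For property S1, I would argue by contradiction. Suppose we had a definable set $X$ of dimension $d$ in a pseudofinite field $F$ and an infinite family $\phi(x,b_i)$, each implying $x\in X$, each of dimension $d$, but with pairwise intersections of dimension strictly less than $d$. Writing $F$ as an ultraproduct $\prod_{\mathcal{U}}\mathbf{F}_{q}$ and descending the configuration to the finite factors, the CvdDM asymptotics $|\phi(\mathbf{F}_q,b)| = c(b) q^{\dim\phi(x,b)} + O(q^{\dim\phi(x,b)-1/2})$ (with $c(b)$ uniformly bounded below when nonzero) give constants $c,C>0$ such that, for a cofinal set of indices,
\[
|\phi(\mathbf{F}_q,b_i)|\geqslant cq^d,\quad |\phi(\mathbf{F}_q,b_i)\cap\phi(\mathbf{F}_q,b_j)|\leqslant Cq^{d-1}\ (i\neq j),\quad |X(\mathbf{F}_q)|\leqslant Cq^d.
\]
Pick $N$ with $Nc>2C$ and then $q$ large enough that $\binom{N}{2}Cq^{-1}<Nc/2$. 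Inclusion--exclusion gives
\[
\Bigl|\bigcup_{i=1}^N\phi(\mathbf{F}_q,b_i)\Bigr|\;\geqslant\;Ncq^d-\binom{N}{2}Cq^{d-1}\;>\;Cq^d\;\geqslant\;|X(\mathbf{F}_q)|,
\]
contradicting $\phi(x,b_i)\to x\in X$.

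The main obstacle is packaged entirely into the CvdDM counting theorem, so once one knows that model-theoretic dimension on $F$ agrees with the dimension of the Zariski closure and that finite-field cardinalities obey the uniform Lang--Weil-style estimate above, both conclusions fall out by the elementary arguments sketched. Without that input one would need to redo the Lang--Weil machinery for uniformly definable (rather than constructible) families, which is the substantive content; the delicate constant to control is the lower bound $c(b)$ across the parameter family, since it is this uniformity that forces S1.
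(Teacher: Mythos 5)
The paper does not prove this Fact; it simply cites Propositions~2.11 and~2.18 of Hrushovski--Pillay, which in turn derive both conclusions from the Chatzidakis--van den Dries--Macintyre counting theorem. Your sketch supplies essentially the argument behind that citation, so it is the ``same route'' in substance, just unpacked. A few remarks on the details. For geometricity, your identification of $\mathrm{acl}$ with relative field-theoretic algebraic closure is precisely Proposition~4.9 of Chatzidakis--van den Dries--Macintyre (which the paper separately invokes at the end of its Section~2.6), after which Steinitz exchange gives the pregeometry axioms; your one-paragraph account is a little terse on why a model-theoretically algebraic element must be field-algebraic (one needs the uniform bound on 0-dimensional instances, not just the orbit count), but the idea is right. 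Elimination of $\exists^\infty$ is indeed the dimension-$0$/dimension-$\geqslant 1$ dichotomy in the counting theorem, though one should note it is the definability of the dimension via the finitely many $\theta_{(d,r)}$ formulas (Fact~\ref{psf}(1) in the paper) that makes the bound uniform in the parameter, not just the raw cardinality estimate. For S1, your inclusion-exclusion argument is correct and is the standard one: the crucial inputs are that the measure $r$ in a nonempty instance of fixed dimension ranges over a \emph{finite} set (hence is bounded below), and that the dimension/measure data is expressed by the first-order formulas $\theta_{(d,r)}$, so the finite configuration $X,\,\phi(x,b_1),\dots,\phi(x,b_N)$ together with its dimension constraints transfers by \L o\'{s} to ultrafilter-many finite fields where the cardinality estimates apply. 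You rely on both of these facts implicitly; stating them would close the remaining daylight between your sketch and a complete proof.
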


The following is the main result of \cite{chatzidakis_vdd_macintyre}. (Point 3 is not mentioned there, but that it can be included follows easily.)

\begin{fact}\label{psf}
    For any formula $\phi(x,y)$ in the language of rings, there is a constant $C\geqslant 0$, a finite set $D\subseteq\mathbb{N}\times\mathbb{Q}_{>0}$, and formulas $\theta_{(d,r)}(y)$ for each $(d,r)\in D$, such that the following hold for every finite field $\mathbf{F}$:
    \begin{enumerate}
        \item For all $b\in\mathbf{F}^{|y|}$, there is $(d,r)\in D$ with $\mathbf{F}\models\theta_{(d,r)}(b)$.
        \item If $\mathbf{F}\models\theta_{(d,r)}(b)$ and $\phi(\mathbf{F}^{|x|},b)\neq\varnothing$, then $\left||\phi(\mathbf{F}^{|x|},b)|-r|\mathbf{F}|^d\right|\leqslant C|\mathbf{F}|^{d-1/2}$.
        \item If $\mathbf{F}\models\theta_{(d,r)}(b)$ and $\phi(\mathbf{F}^{|x|},b)\neq\varnothing$, then $|\mathbf{F}|^d/C\leqslant|\phi(\mathbf{F}^{|x|},b)|\leqslant C|\mathbf{F}|^d$.
    \end{enumerate}
\end{fact}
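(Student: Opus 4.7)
The plan is to combine the Lang--Weil estimate with the model theory of pseudofinite fields, extracting the required uniformity via an ultraproduct compactness argument. The starting point is Lang--Weil itself: for any absolutely irreducible affine variety $V\subseteq\mathbb{A}^n_{\mathbf{F}}$ of dimension $d$ and degree at most $D$, one has $|V(\mathbf{F})|=|\mathbf{F}|^d+O_{n,D}(|\mathbf{F}|^{d-1/2})$. First I would lift this to an arbitrary $\mathbf{F}$-variety $V$ of dimension $d$ by decomposing into absolutely irreducible components over $\overline{\mathbf{F}}$: components not defined over $\mathbf{F}$ are permuted nontrivially by Frobenius, so their $\mathbf{F}$-rational points lie in pairwise intersections of strictly lower dimension and contribute only $O(|\mathbf{F}|^{d-1/2})$ in total. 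Hence $|V(\mathbf{F})|=k|\mathbf{F}|^d+O(|\mathbf{F}|^{d-1/2})$, where $k$ is the number of top-dimensional absolutely irreducible components of $V$ that happen to be defined over $\mathbf{F}$.

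To treat a general formula $\phi(x,y)$, I would pass to a nonprincipal ultraproduct of finite fields, which by Ax's theorem is a pseudofinite field, and so by the previous section is geometric with property S1. The dimension $d(b):=\dim\phi(x,b)$ then takes only finitely many values on $\mathbf{F}^{|y|}$, each carved out by a first-order formula in $b$ via elimination of $\exists^\infty$. To definably extract the leading coefficient, I would invoke the CVDM near-quantifier-elimination for pseudofinite fields, which describes every formula in terms of quantifier-free data together with Galois-theoretic information on finite algebraic extensions. This allows one to describe, definably in $b$, the number $k(b)$ of top-dimensional absolutely irreducible components of $\phi(x,b)$ defined over $\mathbf{F}$, and hence a definable rational `leading coefficient' $r(b)\in\mathbb{Q}_{>0}$. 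The formulas $\theta_{(d,r)}(y)$ would then simply assert $d(b)=d\wedge r(b)=r$.

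Finiteness of the set of pairs $(d,r)$ that can arise is then a compactness argument: $d\leqslant|x|$, and $r$ is a ratio of bounded integers arising from the chosen quantifier elimination for $\phi$, so only finitely many pairs occur across all finite fields and all parameters. Item 1 holds by construction; item 2 follows from the variety-level Lang--Weil estimate above, applied componentwise to the constructible pieces produced by the quantifier elimination and with the uniform error controlled by their bounded complexity; and item 3 is immediate from item 2 together with $r>0$.

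The main obstacle is the uniform definability of the leading coefficient $r(b)$. Whereas $d(b)$ comes essentially for free from $\exists^\infty$-elimination, capturing $r(b)$ as a first-order definable function of $b$ requires tracking how the Frobenius-orbits of absolutely irreducible components of $\phi(x,b)$ vary with $b$ and expressing this variation by a single formula over $\mathbf{F}$. This is precisely the content of \cite{chatzidakis_vdd_macintyre}, and I do not expect a substantially shorter route.
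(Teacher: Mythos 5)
The paper does not reprove this statement: it is Fact~\ref{psf}, stated as a citation to \cite{chatzidakis_vdd_macintyre}, with the observation that item~3 follows easily from the proof there. There is therefore no ``paper's own proof'' to compare against; the relevant comparison is to the CVDM argument itself.

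Your sketch is a faithful high-level reconstruction of that argument: Lang--Weil for absolutely irreducible varieties; the Frobenius action on absolutely irreducible components of a general $\mathbf{F}$-variety, reducing the count to the number of top-dimensional components fixed by Frobenius; the near-quantifier-elimination in terms of Galois-theoretic data over bounded-degree extensions, which makes the pair $(d,r)$ a definable function of the parameter $b$; and then compactness (or, equivalently, bounded complexity of the constructible data produced by the quantifier elimination) to get a single constant $C$, a single finite set of pairs, and validity over all finite fields (with small fields absorbed into $C$). You also correctly identify the genuine difficulty --- definably tracking the Frobenius orbits of components as $b$ varies --- and that this is precisely the content of \cite{chatzidakis_vdd_macintyre}.

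One caution about logical order. You pass to an ultraproduct and then invoke ``geometric with property S1'' and elimination of $\exists^\infty$ for pseudofinite fields, but in this paper those facts (Fact~2.8) are themselves \emph{consequences} of Fact~\ref{psf}. In CVDM's actual development the definability of dimension is extracted directly from the algebraic-geometric decomposition and the Galois data in the near-QE, not imported from an independently established $\exists^\infty$-elimination. So as a self-contained proof your sketch would be circular at that point; as a description of what the cited source does, it is accurate once the dependency is reversed. A second, minor point: a single ultraproduct only yields the uniform bounds along an ultrafilter; the standard way to get ``for every finite field'' is the proof-by-contradiction compactness template (as used, e.g., in the proof of Corollary~\ref{chatzidakis_same_dimension} and Corollary~\ref{HF_finitary} in this paper), quantifying over all would-be counterexample sequences, together with absorbing the finitely many small exceptional fields into $C$. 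You gesture at this but it is worth making the quantifier structure explicit.
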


In a finite or pseudofinite field $\mathbf{F}$, we say that a non-empty definable set $\phi(x,b)$ has (dimension, measure) $(d,r)$ if $\mathbf{F}\models\theta_{(d,r)}(b)$, where $\theta_{(d,r)}(y)$ is given by Fact \ref{psf}; note that, provided $\mathbf{F}$ is sufficiently large, the choice of $(d,r)$ will be unique. Also, if $\mathbf{F}$ is pseudofinite, then it is proved in Propositions 4.9 and 5.3 of \cite{chatzidakis_vdd_macintyre} that a definable set has dimension $d$ in the above sense if and only if the algebraic dimension of its Zariski closure is $d$, and that field-theoretic algebraic closure in $\mathbf{F}$ coincides with model-theoretic algebraic closure.

One has the following easy consequence of Fact \ref{psf} and the triangle inequality; the case $q=0$ occurs if $A$ has smaller dimension than $B$. The proof is easy but we include it to demonstrate moving back and forth between statements in terms of complexity of definable sets and statements in terms of individual first-order formulas.

\begin{corollary}\label{chatzidakis_same_dimension}
    For any $M$, there is a positive constant $C>0$ and a finite set of rationals $Q\subseteq[0,1]$, with the following properties. Suppose $\mathbf{F}$ is a finite field, and that $A\subseteq B$ are definable sets in $\mathbf{F}$ of complexity at most $M$. Then there is some $q\in Q$ such that $||A|-q|B||\leqslant C|\mathbf{F}|^{-1/2}|B|$, where $q$ depends only on the (dimension, measure) of $A$ and $B$ and $q=0$ if $A$ has strictly smaller dimension than $B$.
\end{corollary}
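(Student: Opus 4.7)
The plan is to derive the corollary directly from Fact \ref{psf} by a case analysis on the (dimension, measure) pairs of $A$ and $B$. The first step is the routine reduction from `complexity' statements to `formula' statements: since there are only finitely many formulas in the language of rings of length at most $M$ (up to renaming of variables), I would apply Fact \ref{psf} to each such formula, take $C_0$ to be the maximum of the resulting constants, and take $D\subseteq\mathbb{N}\times\mathbb{Q}_{>0}$ to be the union of the resulting finite sets of admissible (dimension, measure) pairs. This is precisely the `moving back and forth' step that the author flags immediately before the statement.

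Next, given $A\subseteq B$ both of complexity at most $M$ and both non-empty, let $(d_A,r_A),(d_B,r_B)\in D$ be their respective (dimension, measure) pairs. Since $A\subseteq B$, Fact \ref{psf}(3) forces $d_A\leqslant d_B$. If $d_A<d_B$, I set $q=0$; two applications of Fact \ref{psf}(3) give $|A|\leqslant C_0|\mathbf{F}|^{d_A}\leqslant C_0|\mathbf{F}|^{d_B-1}\leqslant C_0^2|\mathbf{F}|^{-1}|B|$, which is stronger than the required bound. If $d_A=d_B=:d$, I set $q=r_A/r_B$; writing $|A|-q|B|=(|A|-r_A|\mathbf{F}|^d)-q(|B|-r_B|\mathbf{F}|^d)$, applying Fact \ref{psf}(2) to each bracketed term, and using Fact \ref{psf}(3) to absorb the leftover factor of $|\mathbf{F}|^d$ into $|B|$, yields the desired bound. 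The degenerate cases where $A$ or $B$ is empty are handled trivially by setting $q=0$.

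Taking $Q=\{0\}\cup\{r/r':(d,r),(d,r')\in D\text{ with }r\leqslant r'\}\subseteq[0,1]$ and $C$ any sufficiently large constant depending on $C_0$ and $Q$ then completes the argument. By construction, $q$ depends only on the (dim, meas) pairs of $A$ and $B$ and not on the defining formulas, and $q=0$ whenever $d_A<d_B$, as required. I do not expect any real obstacle here: the whole argument is a routine triangle-inequality manipulation of Fact \ref{psf}. The one thing worth being careful about is defining $q$ in a way that depends on the (dim, meas) pairs alone, which is exactly what the prescription $q=r_A/r_B$ achieves.
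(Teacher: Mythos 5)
Your proposal is correct and follows essentially the same route as the paper's own proof: reduce to the finitely many formulas of length $\leqslant M$, take a case split on whether $\dim A<\dim B$ or $\dim A=\dim B$, and combine the two estimates in Fact \ref{psf} with the triangle inequality, with $q=r_A/r_B$ in the equal-dimension case. The only point you gloss over is that the inequalities $d_A\leqslant d_B$, and (when $d_A=d_B$) $r_A\leqslant r_B$ so that $r_A/r_B$ actually lies in your $Q\subseteq[0,1]$, follow from $A\subseteq B$ and Fact \ref{psf}(3) only once $|\mathbf{F}|$ is large relative to $C_0$; the paper handles this by opening with the standard reduction to sufficiently large $\mathbf{F}$ (absorbing the finitely many small fields by inflating $C$), which is the one-line step that makes these assumptions legitimate.
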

\begin{proof}
    Fix $M$. It suffices to find a $C$ and $Q$ that work for sufficiently large $\mathbf{F}$, as then there are only finitely many possible exceptions and we can account for them by increasing $C$.

    Let $\Delta$ be the set of all partitioned formulas\footnote[1]{Ie, formulas $\phi(x,y)$ whose free variables have been split into a distinguished choice of `object tuple' $x$ and `parameter tuple' $y$.} in the language of rings of length at most $M$. For each $\phi(x,y)\in\Delta$, let $C_\phi\geqslant 0$, $D_\phi\subseteq\mathbb{N}\times\mathbb{Q}_{>0}$, and $(\theta_{(d,r)}^\phi(y):(d,r)\in D_\phi\}$ witness Fact \ref{psf} for the formula $\phi(x,y)$. For each $\phi\in\Delta$, let $R_\phi\subset\mathbb{Q}_{>0}$ be the set of second coordinates appearing in $D_\phi$ and let $R=\bigcup_{\phi\in\Delta}R_\phi$. Let $C_0=\max_{\phi\in\Delta}C_\phi$ and let $Q$ be the set containing $0$ and also all rationals of form $r_1/r_2$, where $r_1\leqslant r_2$ lie in $R$. We claim that $Q$ and $C:=2C_0^2(\max(R)+1)$ witness the Corollary.

    To see this, fix a finite field $\mathbf{F}$ and sets $A\subseteq B$ in $\mathbf{F}$ of complexity at most $M$; thus there are formulas $\phi(x,y),\psi(x,z)\in\Delta$ and tuples $b\in\mathbf{F}^{|y|},c\in\mathbf{F}^{|z|}$, with $A=\phi(\mathbf{F}^{|x|},b)$ and $B=\psi(\mathbf{F}^{|x|},c)$. If $A$ is empty, then we may pick $q=0\in Q$ and have the desired inequality, so we may assume $A\neq\varnothing$ and thus that $B\neq\varnothing$ as well.
    
    Thus pick $(d,r)\in D_\phi$ and $(e,s)\in D_\psi$ such that $\mathbf{F}\models\theta^\phi_{(d,r)}(b)$ and $\mathbf{F}\models\theta^\psi_{(e,s)}(c)$. Now, since $A,B\neq\varnothing$, by Fact \ref{psf}(2) we have $$\left||A|-r|\mathbf{F}|^d\right|\leqslant C_\phi|\mathbf{F}|^{d-1/2}\text{ and }\left||B|-s|\mathbf{F}|^e\right|\leqslant C_\psi|\mathbf{F}|^{e-1/2}.$$ On the other hand, by Fact 2.9(3), also $|\mathbf{F}|^e/C_\psi\leqslant|B|$. Since $A\subseteq B$, taking $\mathbf{F}$ to be sufficiently large we may assume that $d\leqslant e$ and that, if $d=e$, then that $r\leqslant s$. If $d<e$, then also $d<e-1/2$, so that
    \begin{align*}
        |A|&\leqslant r|\mathbf{F}|^d+C_\phi|\mathbf{F}|^{-1/2}|\mathbf{F}|^{d}\leqslant r|\mathbf{F}|^{-1/2}|\mathbf{F}|^e+C_\phi|\mathbf{F}|^{-1/2}|\mathbf{F}|^e\\
        &= (r+C_\phi)|\mathbf{F}|^{-1/2}|\mathbf{F}|^e\leqslant (r+C_\phi)C_\psi|\mathbf{F}|^{-1/2}|B|\leqslant C|\mathbf{F}|^{-1/2}|B|,
    \end{align*} and we may take $q=0$.

    Otherwise suppose $d=e$. By Fact 2.9(2), $|A|$ lies within $C_\phi|\mathbf{F}|^{d-1/2}$ of $r|\mathbf{F}|^d$ and $r|\mathbf{F}|^d$ lies within $(r/s)C_\psi|\mathbf{F}|^{d-1/2}$ of $(r/s)|B|$. So by the triangle inequality, and the fact that $|\mathbf{F}|^{d-1/2}\leqslant C_\psi|\mathbf{F}|^{-1/2}|B|$, we have $$\left||A|-(r/s)|B|\right|\leqslant (C_\phi+(r/s)C_\psi)C_\psi|\mathbf{F}|^{-1/2}|B|\leqslant C|\mathbf{F}|^{-1/2}|B|;$$ since $r/s\in Q$ we are done.
\end{proof}

Now, let $\mathfrak{C}$ be a saturated pseudofinite field. Let $U$ be a non-empty $\mathfrak{C}$-definable set, and let $(d,s)$ be the (dimension, measure) of $U$. Given a $\mathfrak{C}$-definable set $X\subseteq U$, define $\nu_U(X)$ to be $0$ if $\dim(X)<\dim(U)$, and to be $r/s$ if the (dimension, measure) of $X$ is $(\dim(U),r)$. Then $\nu_U$ is a Keisler measure on $U$, meaning that it is a finitely additive probability measure on the Boolean algebra of $\mathfrak{C}$-definable subsets of $U$, and it is invariant under all $\mathfrak{C}$-definable bijections fixing $U$ setwise. Now the following is Lemma 1.1 in \cite{pillay_starchenko}:

\begin{fact}\label{indep_reals_const}
Fix a small model $M\prec\mathfrak{C}$ over which $U$ is definable. Suppose $\phi(x,y)$ and $\psi(x,z)$ are two $L(M)$-formulas such that $\phi(x,y)\to x\in U$ and $\psi(x,z)\to x\in U$. Then, for any $q\in S_y(M)$ and $r\in S_z(M)$, the value $\nu_U(\phi(x,b)\wedge\psi(x,c))$ is constant as $(b,c)$ ranges over $\{(b,c):b\models q,c\models r,b\ind_M c\}$.
\end{fact}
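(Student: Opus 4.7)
The value $\nu_U(\phi(x,b)\wedge\psi(x,c))$ is determined by which of the $\varnothing$-definable formulas $\theta^{\phi\wedge\psi}_{(d,s)}$ from Fact \ref{psf} the tuple $(b,c)$ satisfies; in particular it is an invariant of $\tp(bc/\varnothing)$, hence a fortiori of $\tp(bc/M)$, and it is preserved by every element of $\mathrm{Aut}(\mathfrak{C}/M)$. So it suffices, given two pairs $(b_1,c_1)$ and $(b_2,c_2)$ satisfying the hypothesis, to link them by a chain of configurations across which this measure can be transported by $M$-invariance.

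The natural way to build such a chain is via the Independence Theorem from Fact \ref{kim_pillay}. First, by existence, pick $c^*\models r$ with $c^*\ind_M(b_1,b_2,c_1,c_2)$. The hypotheses $b_1\equiv_M b_2$ (both realize $q$), $b_1\ind_M c_1$, $b_2\ind_M c^*$, and $c_1\ind_M c^*$ all hold, so applying the Independence Theorem with $(a,a',b,c)=(b_1,b_2,c_1,c^*)$ yields a tuple $a''$ with $(a'',c_1)\equiv_M(b_1,c_1)$, $(a'',c^*)\equiv_M(b_2,c^*)$, and $a''\ind_M(c_1,c^*)$. Two invocations of automorphism invariance of $\nu_U$ then reduce the desired equality to two subsidiary comparisons of the same shape: $\nu_U(\phi(x,a'')\wedge\psi(x,c_1))=\nu_U(\phi(x,a'')\wedge\psi(x,c^*))$ (with $a''$ fixed and the $c$-coordinate varying between $c_1$ and $c^*$) and $\nu_U(\phi(x,b_2)\wedge\psi(x,c^*))=\nu_U(\phi(x,b_2)\wedge\psi(x,c_2))$ (with $b_2$ fixed), each with the additional good feature that on one side the two tuples are now maximally mutually independent over $M$.

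Each subsidiary comparison is then closed by a further, symmetric application of the Independence Theorem, this time with the $y$- and $z$-coordinates swapped. For instance, for the first, pick $a^\dagger\equiv_M a''$ with $a^\dagger\ind_M(a'',c_1,c^*)$ and apply the Independence Theorem with $(a,a',b,c)=(c_1,c^*,a'',a^\dagger)$: all four hypotheses hold by the choices made, and one obtains $c^{**}$ with $(a'',c^{**})\equiv_M(a'',c_1)$ and $(a^\dagger,c^{**})\equiv_M(a^\dagger,c^*)$. Assembling these equalities together with $a''\equiv_M a^\dagger$ and further uses of $\mathrm{Aut}(\mathfrak{C}/M)$-invariance of $\nu_U$ closes the chain; the analogous cascade handles the second subsidiary comparison.

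The main obstacle is the lack of stationarity of types over models in simple theories: from $c_1\equiv_M c^*$, $a''\ind_M c_1$, and $a''\ind_M c^*$ alone one cannot conclude $\tp(a'',c_1/M)=\tp(a'',c^*/M)$. Thus a single application of the Independence Theorem only yields equality of $M$-types over an enriched base containing one of the tuples under comparison at a time, and the proof has to cascade the Independence Theorem through carefully chosen independent copies. The main technical labor is the bookkeeping required to verify that the necessary independence hypotheses remain in force at each stage, but this is routine given the formal properties collected in Fact \ref{kim_pillay}.
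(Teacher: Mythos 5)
The paper itself does not prove this Fact; it quotes it as Lemma 1.1 of \cite{pillay_starchenko}, so there is no internal proof to compare against. Evaluating your argument on its own terms, there is a genuine gap, and it is exactly the one you flag in your last paragraph but then dismiss as ``routine bookkeeping.'' Your first application of the Independence Theorem is correct and produces $a''$ with $a''\ind_M(c_1,c^*)$, $(a'',c_1)\equiv_M(b_1,c_1)$, $(a'',c^*)\equiv_M(b_2,c^*)$, reducing the claim to, for example, $\nu_U(\phi(x,a'')\wedge\psi(x,c_1))=\nu_U(\phi(x,a'')\wedge\psi(x,c^*))$. Your second application, with the roles of the coordinates swapped, then produces $c^{**}$ with $(a'',c^{**})\equiv_M(a'',c_1)$ and $(a^\dagger,c^{**})\equiv_M(a^\dagger,c^*)$. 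To ``close the chain'' from here you would need
\[
\nu_U(\phi(x,a'')\wedge\psi(x,c^{**}))=\nu_U(\phi(x,a^\dagger)\wedge\psi(x,c^{**}))\quad\text{and}\quad\nu_U(\phi(x,a^\dagger)\wedge\psi(x,c^*))=\nu_U(\phi(x,a'')\wedge\psi(x,c^*)),
\]
but $a''\equiv_M a^\dagger$ together with $M$-automorphism invariance of $\nu_U$ does not give you either: an automorphism over $M$ sending $a''\mapsto a^\dagger$ need not fix $c^{**}$ or $c^*$, and asking that one exist which does is precisely asking for stationarity, $\tp(a''/Mc^{**})=\tp(a^\dagger/Mc^{**})$, which fails in simple theories. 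Each invocation of the Independence Theorem manufactures a fresh amalgam and hands you back a comparison of exactly the same shape, so the cascade does not terminate and never produces the desired equality.

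The ingredient you record in your opening paragraph but never exploit is what actually carries the proof: by Fact \ref{psf}, the map $(b,c)\mapsto\nu_U(\phi(x,b)\wedge\psi(x,c))$ has finite rational range with $M$-definable level sets $\theta_{(d,s)}(y,z)$. The statement is therefore about which clopen piece of $S_{yz}(M)$ the independent extensions of $q(y)\cup r(z)$ land in, and the argument of \cite{pillay_starchenko} uses this definability (together with the additivity of dimension / Fubini-type reasoning available in the pseudofinite setting) rather than a purely formal cascade of the Independence Theorem. As your proposal stands, the reduction step you call routine is unjustified, and it is the heart of the matter.
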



\section{Finding a good subgroup}\label{infinitary_section}
In this section we will prove the `group analogue' of Theorem 27 in \citep{tao}. We will work with the conventions of Section \ref{psf_section}. Thus let $\mathfrak{C}$ be a saturated pseudofinite field, let $M\prec\mathfrak{C}$ be a small model, and let $G$ be an $M$-definable group. Throughout we will write $\nu=\nu_G$, as defined in Section \ref{psf_section}, so that $\nu$ is a bi-translation-invariant and inversion-invariant Keisler measure on $G$.

\begin{lemma}\label{pqr strong}
Suppose that $\mathfrak{d},\mathfrak{e}$ are cosets of $G^{00}_M$, and that $q,q'\in S_G(M)$ and $r,r'\in S_G(M)$ are f-generic types concentrated on $\mathfrak{d}$ and $\mathfrak{e}$ respectively. Then there is $a\in G(\mathfrak{C})$ and $b\models q,c\models r$ such that $b\ind_M c$ and $ab\ind_M ac$ and $ab\models q'$ and $ac\models r'$.
\end{lemma}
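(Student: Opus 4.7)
The plan is to apply Fact~\ref{pq=r} twice and stitch the two applications together via the independence theorem. Since $q$ and $q'$ are both concentrated on $\mathfrak{d}$, any $b\models q$ and $d\models q'$ satisfy $db^{-1}\in G^{00}_M$; similarly $ec^{-1}\in G^{00}_M$ for $c\models r$ and $e\models r'$. So the desired $a=(ab)b^{-1}=(ac)c^{-1}$ should realize some f-generic type $p\in S_G(M)$ concentrated on $G^{00}_M$. Such a $p$ exists: by Fact~\ref{g0=g00}, $G^{00}_M$ is an intersection of $M$-definable finite-index subgroups of $G$, each of dimension $\dim(G)$ and hence f-generic by Lemma~\ref{f-gen_iff_max_dim}; the partial type defining $G^{00}_M$ is therefore f-generic, and by Fact~\ref{f-gen_facts}(2) it extends to an f-generic complete type $p\in S_G(M)$.

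Having fixed $p$, I would apply Fact~\ref{pq=r} to the triple $(p,q,q')$ — which satisfies the coset condition since $G^{00}_M\cdot\mathfrak{d}=\mathfrak{d}$ — to get $a_1\models p$ and $b_1\models q$ with $a_1b_1\models q'$ and $(a_1,b_1,a_1b_1)$ pairwise $M$-independent. Analogously, applying Fact~\ref{pq=r} to $(p,r,r')$ produces $a_2\models p$ and $c_2\models r$ with $a_2c_2\models r'$ and $(a_2,c_2,a_2c_2)$ pairwise $M$-independent. Invoking the existence axiom of Fact~\ref{kim_pillay}(2), I would then replace $(a_2,c_2)$ by an $M$-conjugate to arrange $(a_2,c_2)\ind_M(a_1,b_1)$; the pairwise independence and the condition $a_2c_2\models r'$ are type-theoretic, so they are preserved.

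Now I would invoke the independence theorem over $M$. We have $a_1\equiv_M a_2$ (both realize $p$), $a_1\ind_M b_1$, $a_2\ind_M c_2$, and $b_1\ind_M c_2$ from the previous step. This produces an $a$ with $a\equiv_{Mb_1}a_1$, $a\equiv_{Mc_2}a_2$, and $a\ind_M(b_1,c_2)$. Setting $b:=b_1$ and $c:=c_2$ one immediately reads off $b\models q$, $c\models r$, $b\ind_M c$, $ab\models q'$, and $ac\models r'$, which is most of what the lemma asks for.

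The remaining claim $ab\ind_M ac$ follows from f-generic invariance. Since $b\ind_M c$, $\tp(b/Mc)$ is a non-forking extension of the f-generic type $q=\tp(b/M)$, hence itself f-generic by Fact~\ref{f-gen_facts}(4). From $a\ind_M(b,c)$ we get $a\ind_{Mc}b$ by monotonicity, and then $b\ind_{Mc}a$ by symmetry. Applying Fact~\ref{f-gen_facts}(5) with $A=M$, $B=Mc$, and $g=a$ yields $ab\ind_M(c,a)$, and in particular $ab\ind_M ac$. The only step that really demands attention is arranging the three prior independences $a_1\ind_M b_1$, $a_2\ind_M c_2$, $b_1\ind_M c_2$ simultaneously, so that the independence theorem can be applied; once that is done, everything else is a direct call to the cited facts.
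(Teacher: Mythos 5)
Your proof is correct and essentially follows the paper's own argument: fix an f-generic $p\in S_G(M)$ concentrated on $G^{00}_M$, apply Fact~\ref{pq=r} once to $(p,q,q')$ and once to $(p,r,r')$, merge the resulting realizations of $p$ using the independence theorem over $M$, and then derive $ab\ind_M ac$ from $b\ind_M(a,c)$ via Fact~\ref{f-gen_facts}(4,5). The only cosmetic difference is that the paper fixes $M$-independent realizations $b\models q$ and $c\models r$ first and then invokes Fact~\ref{pq=r} for each, whereas you invoke Fact~\ref{pq=r} twice and then reconjugate one pair via existence to arrange the required independence; you also spell out the existence of $p$, which the paper leaves implicit.
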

\begin{proof}
    Let $p\in S_G(M)$ be any f-generic type concentrated on $G^{00}_M$. Let $b\models q$ and $c\models r$ be $M$-independent realizations of $q,r$. By Fact \ref{pq=r}, there are $u,v\models p$ such that $ub\models q'$ and $vc\models r'$ and such that each triple $(u,b,ub)$ and $(v,c,vc)$ is pairwise $M$-independent. Then $\tp(u/M,b)$ and $\tp(v/M,c)$ are each non-forking extensions of $p$, and $b\ind_M c$, so by the independence theorem we may find $a$ such that $a\equiv_{(M,b)}u$ and $a\equiv_{(M,c)}v$ and $a\ind_M(b,c)$. By the first two conditions, we have $ab\models q'$ and $ac
    \models r'$, so we just need to show $ab\ind_M ac$.

    Since $b\ind_M c$ and $a\ind_M(b,c)$, we have $b\ind_M(a,c)$. By Fact \ref{f-gen_facts}(4,5) it follows that $ab\ind_M(a,c)$, and hence in particular that $ab\ind_M ac$.
\end{proof}

\begin{lemma}\label{const_on_generics}
Let $D\subseteq G$ be an $M$-definable set, and let $\mathfrak{d},\mathfrak{e}$ be cosets of $G^{00}_M$. Then the value $\nu(bD\cap cD)$ is constant as $(b,c)$ ranges over the set of all $(b,c)\in\mathfrak{d}\times\mathfrak{e}$ such that $b\ind_M c$ and such that $\tp(b/M)$ and $\tp(c/M)$ are f-generic.
\end{lemma}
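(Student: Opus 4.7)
The plan is to reduce this to Fact \ref{indep_reals_const} using Lemma \ref{pqr strong} together with the left-translation-invariance of $\nu$. Fact \ref{indep_reals_const} already gives constancy of $\nu(bD\cap cD)$ as $(b,c)$ ranges over $M$-independent realizations of a \emph{single} fixed pair of types $(q,r)$; the content of the current lemma is that we may also vary the pair of types, so long as they remain f-generic and concentrated on $(\mathfrak{d},\mathfrak{e})$.

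Concretely, I would fix two pairs $(b,c)$ and $(b',c')$ satisfying the hypotheses, set $q=\tp(b/M)$, $q'=\tp(b'/M)$, $r=\tp(c/M)$, $r'=\tp(c'/M)$, and apply Lemma \ref{pqr strong} to produce $a\in G(\mathfrak{C})$ and $b_0\models q$, $c_0\models r$ with $b_0\ind_M c_0$, $ab_0\ind_M ac_0$, $ab_0\models q'$, and $ac_0\models r'$. Note that the hypotheses of Lemma \ref{pqr strong} are exactly matched, since $b,b'\in\mathfrak{d}$ and $c,c'\in\mathfrak{e}$ with f-generic types over $M$.

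Now the endgame has three ingredients. First, by the left-translation-invariance of $\nu$ (noted in the statement of Section \ref{infinitary_section}),
\[
    \nu(b_0D\cap c_0D)=\nu(ab_0D\cap ac_0D).
\]
Second, applying Fact \ref{indep_reals_const} to the $L(M)$-formula $\phi(x,y):= (y^{-1}x\in D)$, taken twice with variables $(y,z)$ so that $\phi(x,b_0)\wedge\phi(x,c_0)$ defines $b_0D\cap c_0D$, and noting that $b_0\equiv_M b$, $c_0\equiv_M c$, $b_0\ind_M c_0$, we conclude
\[
    \nu(b_0D\cap c_0D)=\nu(bD\cap cD).
\]
Third, the same application of Fact \ref{indep_reals_const} with the types $q',r'$ and the independent realizations $ab_0,ac_0$ gives
\[
    \nu(ab_0D\cap ac_0D)=\nu(b'D\cap c'D).
\]
Chaining these three equalities yields $\nu(bD\cap cD)=\nu(b'D\cap c'D)$, which is the claim.

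The only nontrivial step is the invocation of Lemma \ref{pqr strong}: it is the mechanism that lets us simultaneously prescribe the types of $b_0,c_0$ (as $q,r$) and of $ab_0,ac_0$ (as $q',r'$) while maintaining the two independences $b_0\ind_M c_0$ and $ab_0\ind_M ac_0$ needed to invoke Fact \ref{indep_reals_const} on both sides. Everything else is bookkeeping.
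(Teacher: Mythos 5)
Your proposal is correct and takes essentially the same approach as the paper: both proofs invoke Lemma \ref{pqr strong} to link the two pairs of types and then combine Fact \ref{indep_reals_const} with left-translation-invariance of $\nu$. The only cosmetic difference is that you compare two arbitrary pairs directly, whereas the paper fixes one reference pair of f-generic types concentrated on $\mathfrak{d},\mathfrak{e}$, defines a reference value $s$, and shows every admissible pair attains it; the substance is identical.
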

\begin{proof}
Fix f-generic types $q',r'\in S_G(M)$ concentrated on $G^{00}_M,\mathfrak{c},\mathfrak{d}$ respectively. By Fact \ref{indep_reals_const}, the value $s:=\nu(uD\cap vD)$ is constant as $(u,v)$ ranges over $$\{(u,v):u\models q',v\models r',u\ind_M v\}.$$ We claim that, for any $(b,c)\in\mathfrak{d}\times\mathfrak{e}$ such that $b\ind_M c$ and such that $\tp(b/M)$ and $\tp(c/M)$ are f-generic, we have $\nu(bD\cap cD)=s$. To see this, fix any f-generic types $q,r\in S_G(M)$ concentrated on $\mathfrak{d},\mathfrak{e}$ respectively; by Fact \ref{indep_reals_const}, and since $q,r$ are arbitrary, it suffices to find $b\models q$ and $c\models r$ with $b\ind_M c$ and $\nu(bD\cap cD)=s$.

By Lemma \ref{pqr strong}, there are $a\in G(\mathfrak{C}),b\models q,c\models r$ such that $b\ind_Mc$ and $ab\ind_M ac$ and $ab\models q'$ and $ac\models r'$. But now $s=\nu(abD\cap acD)$ since $ab,ac$ are $M$-independent realizations of $q',r'$, and $\nu(abD\cap acD)=\nu(a(bD\cap cD))=\nu(bD\cap cD)$ by translation invariance of $\nu$.
\end{proof}

Now the following is proved as in Corollary 1.2 in \cite{pillay_starchenko}.

\begin{lemma}\label{HF_fixed_cosets}
    Let $D\subseteq G$ be an $M$-definable set, and fix $v,w\in G(\mathfrak{C})$. Then there is an $M$-definable set $F\subseteq G\times G$ with $\dim(F)<2\dim(G)$, and an $M$-definable finite-index normal subgroup $H\leqslant G$, such that $\nu(bD\cap cD)$ is constant as $(b,c)$ ranges over $(vH\times wH)\setminus F$.
\end{lemma}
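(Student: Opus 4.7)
The plan is to follow the strategy of Corollary 1.2 of Pillay--Starchenko. First, identify the ``generic'' value $\alpha_0$ of $\nu(bD\cap cD)$ on generic pairs in $vG^{00}_M\times wG^{00}_M$ via Lemma \ref{const_on_generics}, then use Fact \ref{g0=g00} and a compactness argument to replace $G^{00}_M$ by a suitable $M$-definable finite-index normal subgroup $H$, and finally construct $F$ from the level-set stratification of $\nu(bD\cap cD)$ restricted to $H$-coset pairs.

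In more detail, Lemma \ref{const_on_generics} produces a constant $\alpha_0$ with $\nu(bD\cap cD)=\alpha_0$ for every $M$-independent f-generic pair $(b,c)\in vG^{00}_M\times wG^{00}_M$. By Fact \ref{psf}, the set $X_{\alpha_0}=\{(b,c)\in G\times G:\nu(bD\cap cD)=\alpha_0\}$ is $M$-definable. Any $(b,c)\in vG^{00}_M\times wG^{00}_M$ with $\dim(b,c/Mvw)=2\dim(G)$ is a non-forking-independent pair of $M$-f-generic points (by the S1 content of Section \ref{s1_section} and Lemma \ref{f-gen_iff_max_dim}, using monotonicity of dimension), hence lies in $X_{\alpha_0}$; so $(vG^{00}_M\times wG^{00}_M)\setminus X_{\alpha_0}$ has dimension strictly less than $2\dim(G)$.

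The main step is a saturation argument. Using Fact \ref{g0=g00}, write $G^{00}_M=\bigcap_{i\in I}H_i$ as a directed intersection of $M$-definable finite-index normal subgroups (normality is arranged by intersecting $G$-conjugates). I claim that some $(vH_i\times wH_i)\setminus X_{\alpha_0}$ has dimension $<2\dim(G)$. If not, then the partial type over $Mvw$ asserting ``$(x_1,x_2)\in(vH_i\times wH_i)\setminus X_{\alpha_0}$'' for every $i\in I$ together with ``$(x_1,x_2)\notin Y$'' for every $Mvw$-definable $Y\subseteq G\times G$ with $\dim(Y)<2\dim(G)$ is finitely satisfiable: given finitely many $H_{i_j}$ and $Y_j$, pick $i^\star$ with $H_{i^\star}\subseteq\bigcap_j H_{i_j}$ and use the assumption that $(vH_{i^\star}\times wH_{i^\star})\setminus X_{\alpha_0}$ has dimension $2\dim(G)$ to find a point outside the finite union of the given $Y_j$. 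A realization is then a full-dimensional point of $(vG^{00}_M\times wG^{00}_M)\setminus X_{\alpha_0}$, contradicting the previous paragraph. Fix $H=H_i$ witnessing the claim.

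Finally, set
\[ F=\{(b,c)\in G\times G:\dim(\{(b',c')\in bH\times cH:\nu(b'D\cap c'D)=\nu(bD\cap cD)\})<2\dim(G)\}, \]
an $M$-definable set by the definability of $\nu$ and of dimension via Fact \ref{psf}. For each of the finitely many cosets $\mathfrak{d}\times\mathfrak{e}$ of $H\times H$, the intersection $F\cap(\mathfrak{d}\times\mathfrak{e})$ is a finite union of non-top-dimensional level sets, so $\dim(F)<2\dim(G)$. The claim of the previous paragraph forces $X_{\alpha_0}\cap(vH\times wH)$ to be the unique top-dimensional level set in $vH\times wH$, whence $(vH\times wH)\setminus F=X_{\alpha_0}\cap(vH\times wH)$, on which $\nu(bD\cap cD)$ equals the constant $\alpha_0$. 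I expect the main obstacle to be the compactness step in the third paragraph, where one has to interleave carefully the type-definability of $G^{00}_M$, the $M$-definability of $X_{\alpha_0}$, and the first-order definability of dimension granted by Fact \ref{psf}.
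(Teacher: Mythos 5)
Your proof is correct and follows the same overall strategy as the paper (both derive, as you anticipate, from Corollary~1.2 of Pillay--Starchenko, via Lemma~\ref{const_on_generics}, Lemma~\ref{f-gen_iff_max_dim}, Fact~\ref{g0=g00}, and compactness). The substantive difference is in the compactness step and in how $M$-definability of $F$ is secured. The paper exploits that, since each $H_i$ is $M$-definable of finite index and $M$ is a model, one may choose coset representatives $v_i,w_i\in G(M)$ with $v_iH_i=vH_i$ and $w_iH_i=wH_i$; the partial type $\pi(x,y)$ is then entirely over $M$, and a single application of compactness to $\pi\vdash\delta$ directly yields both $H$ and the $M$-definable low-dimensional $F$. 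You instead run the compactness argument over $Mvw$, which yields $H$ together with an $Mvw$-definable exceptional set of small dimension; since the lemma demands an $M$-definable $F$, you then separately define $F$ as a canonical level-set stratification of $\nu(\cdot D\cap\cdot D)$ relative to $H$-coset pairs. Your verification that $F$ is $M$-definable (Fact~\ref{psf} makes both the measure levels and the dimension drop first-order), that $\dim(F)<2\dim(G)$ (finitely many $H$-coset pairs, finitely many level sets, only non-top ones contribute), and that $(vH\times wH)\setminus F=X_{\alpha_0}\cap(vH\times wH)$ is the set where $\nu$ is constantly $\alpha_0$, all go through. The paper's $v_i\in G(M)$ trick is slightly more economical; your $F$ has the virtue of being defined without reference to $v,w$, though of course the choice of $H$ still depends on them.
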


\begin{proof}
    Let $\mathfrak{d},\mathfrak{e}$ denote the cosets $vG^{00}_M,wG^{00}_M$, respectively. Let $(H_i)_{i\in I}$ be the family of $M$-definable finite-index normal subgroups of $G$, so that by Fact \ref{g0=g00} $G^{00}_M=\bigcap_{i\in I}H_i$. For each $i\in I$, let $v_i,w_i\in G(M)$ be such that $v_iH_i=vH_i$ and $w_iH_i=wH_i$. Now consider the partial type $\pi(x,y)$ declaring $\{x\in v_iH_i\wedge y\in w_iH_i:i\in I\}$ and $$\{\neg\theta(x,y):\theta(x,y)\in L(M),\dim(\theta(x,y))<2\dim(G)\}.$$ If $(b,c)\models\pi(x,y)$, then $b\in\mathfrak{d}$ and $c\in\mathfrak{e}$ and $b\ind_M c$, and by Lemma \ref{f-gen_iff_max_dim} $\tp(b/M)$ and $\tp(c/M)$ are f-generic; in other words $b$ and $c$ then satisfy the hypotheses of Lemma \ref{const_on_generics} for the cosets $\mathfrak{d},\mathfrak{e}$. Let $s$ be the value of $\nu(bD\cap cD)$ for some $(b,c)\models\pi(x,y)$. By Fact \ref{psf}, there is a formula $\delta(x,y)$ expressing that $\nu(xD\cap yD)=s$, and by Lemma \ref{const_on_generics} we have $\pi(x,y)\vdash\delta(x,y)$, so the claim follows from compactness.
\end{proof}

\begin{lemma}\label{HF_left}
    Let $D\subseteq G$ be an $M$-definable set. Then there is an $M$-definable finite-index normal subgroup $H\leqslant G$ and an $M$-definable set $F\subseteq G\times G$ with $\dim(F)<2\dim(G)$ and such that, for any cosets $V,W$ of $H$, the value $\nu(bD\cap cD)$ is constant as $(b,c)$ ranges over $(V\times W)\setminus F$.
\end{lemma}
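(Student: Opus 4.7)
The plan is to promote the coset-by-coset result of Lemma \ref{HF_fixed_cosets} to the desired uniform statement via a compactness argument run in four variables rather than two. The critical observation is that, by Fact \ref{psf} applied to the formula $x\in yD \wedge x\in zD$, the value $\nu(bD\cap cD)$ takes only finitely many values $s_1,\ldots,s_k$ as $b,c$ range over $G(\mathfrak{C})$; hence the assertion $\nu(y_1D\cap z_1D)=\nu(y_2D\cap z_2D)$ is captured by the $L(M)$-formula
$$\psi(y_1,z_1,y_2,z_2)\;:=\;\bigvee_{i=1}^k\bigl(\delta_i(y_1,z_1)\wedge\delta_i(y_2,z_2)\bigr),$$
where $\delta_i(y,z)$ denotes the formula saying $\nu(yD\cap zD)=s_i$.

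I would then form the partial type $\Pi(y_1,z_1,y_2,z_2)$ consisting of: (i) for every $M$-definable finite-index normal subgroup $H_i\leqslant G$, the formulas $y_1H_i=y_2H_i$ and $z_1H_i=z_2H_i$; and (ii) for every $L(M)$-formula $\theta(y,z)$ with $\dim(\theta)<2\dim(G)$, the formulas $\neg\theta(y_1,z_1)$ and $\neg\theta(y_2,z_2)$. By Fact \ref{g0=g00}, condition (i) forces $y_1G^{00}_M=y_2G^{00}_M$ and $z_1G^{00}_M=z_2G^{00}_M$; by Lemma \ref{f-gen_iff_max_dim} together with additivity of dimension and the identification of dividing-independence with dimension-independence in S1 theories, condition (ii) forces each pair $(y_j,z_j)$ to consist of $M$-independent realizations of f-generic types. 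Hence both pairs satisfy the hypotheses of Lemma \ref{const_on_generics} for the common cosets of $G^{00}_M$, so $\Pi\vdash\psi$.

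Since $\psi$ is a genuine first-order formula, compactness extracts a finite subtype of $\Pi$ that already implies $\psi$. Intersecting the finitely many subgroups $H_i$ appearing in this finite subtype yields a single $M$-definable finite-index normal $H\leqslant G$, and disjoining the finitely many sub-maximal-dimension formulas $\theta$ yields a single $M$-definable set $F\subseteq G\times G$ with $\dim(F)<2\dim(G)$. The resulting implication reads exactly: whenever $y_1H=y_2H$, $z_1H=z_2H$, and $(y_1,z_1),(y_2,z_2)\notin F$, then $\nu(y_1D\cap z_1D)=\nu(y_2D\cap z_2D)$, which is the desired uniformity on each product of cosets of $H$.

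The main obstacle is conceptual rather than technical: noticing that Fact \ref{psf} makes ``$(y_1,z_1)$ and $(y_2,z_2)$ give the same $\nu$-value'' a first-order condition, so that the one-pair compactness step of Lemma \ref{HF_fixed_cosets} can be re-run in relational form on $G\times G$. Consistency of $\Pi$ itself is immediate (take two pairs of $M$-independent f-generic realizations in the same pair of $G^{00}_M$-cosets, for instance via Lemma \ref{pqr strong}), and the collapse of finitely many subgroup and sub-dimension conditions into single definable objects is routine.
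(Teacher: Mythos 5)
Your proof is correct, but it takes a genuinely different route from the paper. The paper first establishes Lemma \ref{HF_fixed_cosets}, which produces a subgroup $H_{vw}$ and an exceptional set $F_{vw}$ depending on a fixed pair of cosets $vG^{00}_M, wG^{00}_M$, and then promotes this to uniformity over all coset pairs via a two-stage covering argument: for each $j$, compactness extracts a finite cover of $G$ by translates $g_iH_{ij}$, letting one intersect finitely many $H_{ij}$ and union finitely many $F_{ij}$ to get a single $(H_j,F_j)$ uniform in the first coordinate; a second application of compactness then handles the second coordinate. You instead bypass Lemma \ref{HF_fixed_cosets} entirely: noticing that (by Fact \ref{psf}) the statement $\nu(y_1D\cap z_1D)=\nu(y_2D\cap z_2D)$ is expressible by a single $L(M)$-formula, you run one compactness argument on a four-variable partial type over $G^2\times G^2$, whose conditions simultaneously pin both pairs to common $G^{00}_M$-cosets (via Fact \ref{g0=g00}) and force genericity and $M$-independence of each pair (via avoiding all sub-maximal-dimension $M$-definable subsets of $G^2$, Lemma \ref{f-gen_iff_max_dim}, and the S1 identification of dimension-independence with forking-independence). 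This is arguably cleaner: it collapses the intermediate lemma and the two rounds of covering into a single compactness step, at the modest cost of reasoning about a four-variable type. Both arguments ultimately rest on Lemma \ref{const_on_generics}, and your observation that ``same measure value'' is a definable condition is precisely the one the paper uses inside the proof of Lemma \ref{HF_fixed_cosets}; you have simply pushed it one level higher. The details you supply — consistency of the type, the collapse of finitely many subgroup and dimension constraints into a single $H$ and $F$, and the translation of the resulting implication back into the stated form of the lemma — are all correct and routine.
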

\begin{proof}
    Let $(g_i)_{i\in I}$ be elements of $G(\mathfrak{C})$ representing all the cosets of $G^{00}_M$. For each $i,j\in I$, let $F_{ij}$ and $H_{ij}$ be given by Lemma \ref{HF_fixed_cosets} with $v=g_i$ and $w=g_j$.
    
    Now, fix $j\in I$. Then $G$ is covered by $(g_iH_{ij})_{i\in I}$, so by compactness there are $i_1,\dots,i_n$ with $G=g_{i_1}H_{i_1j}\cup\dots\cup g_{i_n}H_{i_nj}$. Let $H_j=H_{i_1j}\cap\dots\cap H_{i_nj}$ and $F_j=F_{i_1j}\cup\dots\cup F_{i_nj}$. Then $H_j$ is still finite-index in $G$ and $F_j$ still has dimension $<2\dim(G)$. Moreover, for any $g\in G(\mathfrak{C})$, the value $\nu(bD\cap cD)$ is constant as $(b,c)$ ranges over $(gH_j\times g_jH_j)\setminus F$; indeed, there is some $1\leqslant k\leqslant n$ with $g\in g_{i_k}H_{i_kj}$, so that $gH_j\subseteq g_{i_k}H_{i_kj}$, and since $F_{i_kj}\subseteq F_j$ thus $(gH_j\times g_jH_j)\setminus F_j\subseteq (g_{i_k}H_{i_kj}\times g_jH_{i_kj})\setminus F_{i_kj}$.

    Now, $G$ is again covered by $(g_jH_j)_{j\in I}$. So again by compactness there are $j_1,\dots,j_m\in I$ with $G=g_{j_1}H_{j_1}\cup\dots\cup g_{j_m}H_{j_m}$. By an argument as in the previous paragraph, letting $H=H_{j_1}\cap\dots\cap H_{j_m}$ and $F=F_{j_1}\cup\dots\cup F_{j_m}$ gives the desired result.
\end{proof}

\begin{corollary}\label{HF_bi}
    Let $D\subseteq G$ be an $M$-definable set. Then there is an $M$-definable finite-index normal subgroup $H\leqslant G$ and an $M$-definable set $F\subseteq G\times G$ with $\dim(F)<2\dim(G)$ and such that, for any cosets $V,W$ of $H$, the values $\nu(b^{-1}D\cap c^{-1}D)$ and $\nu(Db\cap Dc)$ are constant as $(b,c)$ ranges over $(V\times W)\setminus F$.
\end{corollary}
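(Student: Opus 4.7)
The plan is to bootstrap from Lemma \ref{HF_left} using two symmetries of $\nu$: inversion-invariance, and the fact that componentwise inversion on $G \times G$ permutes products of cosets of a normal subgroup.

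For the quantity $\nu(b^{-1}D \cap c^{-1}D)$, I would apply Lemma \ref{HF_left} to $D$ directly, obtaining a finite-index normal $M$-definable subgroup $H_1 \leqslant G$ and an $M$-definable set $F_1 \subseteq G \times G$ with $\dim(F_1) < 2\dim(G)$, such that $\nu(b'D \cap c'D)$ is constant on $(V' \times W') \setminus F_1$ for any cosets $V', W'$ of $H_1$. Substituting $b' = b^{-1}$, $c' = c^{-1}$, and using normality of $H_1$ (so that $V^{-1}, W^{-1}$ are cosets of $H_1$ whenever $V, W$ are), one obtains constancy of $\nu(b^{-1}D \cap c^{-1}D)$ on $(V \times W) \setminus F_1'$, where $F_1' := \{(b,c) : (b^{-1}, c^{-1}) \in F_1\}$ is $M$-definable of the same dimension as $F_1$.

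For the quantity $\nu(Db \cap Dc)$, inversion-invariance of $\nu$ together with the identity $(Db)^{-1} = b^{-1}D^{-1}$ yields $\nu(Db \cap Dc) = \nu(b^{-1}D^{-1} \cap c^{-1}D^{-1})$. Hence I would apply Lemma \ref{HF_left} to $D^{-1}$ in place of $D$, obtaining $H_2$ and $F_2$, and then perform the same inversion substitution as above to produce an $M$-definable $F_2'$ of dimension $<2\dim(G)$ such that $\nu(Db \cap Dc)$ is constant on $(V \times W) \setminus F_2'$ for any cosets $V, W$ of $H_2$.

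To finish, set $H = H_1 \cap H_2$ and $F = F_1' \cup F_2'$. Then $H$ is an $M$-definable, finite-index, normal subgroup of $G$, $\dim(F) < 2\dim(G)$, and every coset of $H$ is contained in a unique coset of each $H_i$, so both constancy statements transfer from cosets of the $H_i$ to cosets of $H$. I do not foresee any real obstacle here beyond keeping the inversion substitutions straight; the whole argument hinges on the fact that Lemma \ref{HF_left} delivers a \emph{normal} subgroup, which is what makes the inversion trick preserve the coset structure.
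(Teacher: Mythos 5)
Your proof is correct and takes essentially the same approach as the paper: apply Lemma \ref{HF_left} to $D$ and to $D^{-1}$, use normality to pass through componentwise inversion, and use inversion-invariance of $\nu$ to convert $\nu(Db\cap Dc)$ into the left-translate form. The sets $F_i'$ you define are exactly the sets $F_i^{-1}$ in the paper's proof, so the two arguments coincide.
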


\begin{proof}
    Let $H_1,F_1$ and $H_2,F_2$ be given by Lemma \ref{HF_left} for $D$ and $D^{-1}=\{d^{-1}:d\in D\}$, respectively. Let $H=H_1\cap H_2$ and $F=F_1^{-1}\cup F_2^{-1}$, where $F_i^{-1}=\{(u^{-1},v^{-1}):(u,v)\in F_i\}$; note that $\dim(F_i^{-1})=\dim(F_i)$ and so $\dim(F)<2\dim(G)$.


    Let $V,W$ be cosets of $H$, and for each $i\in\{1,2\}$ let $V_i,W_i$ be the cosets of $H_i$ with $V\subseteq V_i$ and $W\subseteq W_i$.
    
    Now we have $(V\times W)\setminus F\subseteq ((V_1^{-1}\times W_1^{-1})\setminus F_1)^{-1}$. Thus, as $(b,c)$ ranges over $(V\times W)\setminus F$, $(b^{-1},c^{-1})$ ranges over $(V_1^{-1}\times W_1^{-1})\setminus F_1$, and hence (since $V_1^{-1}$ and $W_1^{-1}$ are cosets of $H_1$) the value of $\nu(b^{-1}D\cap c^{-1}D)$ remains constant.

    Symmetrically, as $(b,c)$ ranges over $(V\times W)\setminus F$, the value of $\nu(b^{-1}D^{-1}\wedge c^{-1}D^{-1})$ remains constant. But by inversion-invariance of $\nu$ this latter value is equal to $\nu(Db\wedge Dc)$, as needed.
\end{proof}

\section{Graph regularity}
We get the following as a consequence of Corollary \ref{HF_bi} and Fact \ref{psf}; the proof is routine.

\begin{corollary}\label{HF_finitary}
    Fix $M>0$. Then there is $C=C(M)>0$ such that the following holds for any finite field $\mathbf{F}$. If $G$ is a definable group in $\mathbf{F}$ of complexity $\leqslant M$, and $D\subseteq G$ is a definable subset of $G$ of complexity $\leqslant M$, then there is a definable normal subgroup $H\leqslant G$, of complexity at most $C$, with $[G:H]\leqslant C$ and with the following properties:
    \begin{enumerate}
        \item For any cosets $V,W$ of $H$, there is a rational number $r\leqslant 1$ such that, for all but $\leqslant C|\mathbf{F}|^{-1/2}|G|^2$ many pairs $(b,c)\in V\times W$, $\big||b^{-1}D\cap c^{-1}D|-r|G|\big|\leqslant C|\mathbf{F}|^{-1/2}|G|$.
        \item For any cosets $V,W$ of $H$, there is a rational number $r\leqslant 1$ such that, for all but $\leqslant C|\mathbf{F}|^{-1/2}|G|^2$ many pairs $(b,c)\in V\times W$, $\big||Db\cap Dc|-r|G|\big|\leqslant C|\mathbf{F}|^{-1/2}|G|$.
    \end{enumerate}
 \end{corollary}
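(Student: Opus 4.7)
The plan is a standard compactness/ultraproduct transfer, reducing the claim to Corollary~\ref{HF_bi} in a saturated pseudofinite field via \L o\'s's theorem and Fact~\ref{psf}. I would assume for contradiction that no $C=C(M)$ works: for each $n\in\mathbb{N}$ one picks a finite field $\mathbf{F}_n$ and definable $G_n,D_n$ in $\mathbf{F}_n$ of complexity $\leqslant M$ such that no normal subgroup of $G_n$ of complexity and index $\leqslant n$ satisfies the desired estimates with constant $n$. Since there are only finitely many ring-language formulas of length $\leqslant M$ (up to renaming of variables), a pigeonhole step lets me assume the defining formulas $\phi_G(x,\bar{y})$ and $\phi_D(x,\bar{z})$ are fixed for all $n$, with differing parameter tuples $\bar{a}_n,\bar{b}_n$. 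I would then take a non-principal ultrafilter $\mathcal{U}$ on $\mathbb{N}$ and a saturated elementary extension $\mathfrak{C}$ of $\prod_n\mathbf{F}_n/\mathcal{U}$, which is a saturated pseudofinite field; set $\bar{a},\bar{b}$ to be the ultralimits, $G=\phi_G(\mathfrak{C},\bar{a})$, $D=\phi_D(\mathfrak{C},\bar{b})$, and pick a small model $M\prec\mathfrak{C}$ containing $\bar{a}\bar{b}$. Corollary~\ref{HF_bi} would then produce an $M$-definable normal subgroup $H\leqslant G$ of some finite index $K_1$ and an $M$-definable $F\subseteq G\times G$ with $\dim(F)<2\dim(G)=:2d$, such that for each pair $(V,W)$ of cosets of $H$ the values $\nu(b^{-1}D\cap c^{-1}D)$ and $\nu(Db\cap Dc)$ are constant rationals $r^L_{V,W},r^R_{V,W}\in[0,1]$ on $(V\times W)\setminus F$.

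The next step is to transfer these data back to the $\mathbf{F}_n$. Fixing coset representatives for $H$ inside $M$, the defining formulas for $H$, $F$, and the cosets all have some bounded complexity $K$. By \L o\'s's theorem, for $\mathcal{U}$-almost every $n$ the set $H_n\subseteq G_n$ is a normal subgroup of index exactly $K_1$, its cosets correspond bijectively with those of $H$, and the (dim, measure) data of $F_n$ and $G_n$ match those of $F$ and $G$. Fact~\ref{psf}(3) then gives $|F_n|\leqslant C_0|\mathbf{F}_n|^{2d-1}\leqslant C_0'|\mathbf{F}_n|^{-1/2}|G_n|^2$, which suffices for the exceptional-pair bound since $F_n\cap(V_n\times W_n)\subseteq F_n$ for every coset pair. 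Moreover, for each coset pair $(V,W)$, the statement ``$b^{-1}D\cap c^{-1}D$ has (dim, measure) equal to $(d,r^L_{V,W}s)$,'' where $(d,s)$ is the (dim, measure) of $G$, is a first-order condition of bounded complexity on $(b,c)$ by Fact~\ref{psf}(1); it holds on $(V\times W)\setminus F$ in $\mathfrak{C}$, hence by \L o\'s on $(V_n\times W_n)\setminus F_n$ in $\mathbf{F}_n$ for $\mathcal{U}$-almost every $n$. Two applications of Fact~\ref{psf}(2), combined via the triangle inequality exactly as in Corollary~\ref{chatzidakis_same_dimension}, would then give $\big||b^{-1}D_n\cap c^{-1}D_n|-r^L_{V,W}|G_n|\big|\leqslant C_1|\mathbf{F}_n|^{-1/2}|G_n|$ uniformly on $(V_n\times W_n)\setminus F_n$; the argument for $|Db\cap Dc|$ is identical. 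Setting $C=\max(K,K_1,C_0',C_1)$ and picking $n\geqslant C$ in the support of $\mathcal{U}$ with all of the above holding then contradicts the choice of $\mathbf{F}_n$.

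There is no substantive obstacle here beyond bookkeeping: the subgroup $H$, the set $F$, and the rationals $r^L_{V,W},r^R_{V,W}$ produced by Corollary~\ref{HF_bi} are each encoded by bounded-complexity first-order data, directly or via Fact~\ref{psf}(1), so \L o\'s transfers them faithfully to the slices $\mathbf{F}_n$, and the quantitative content of Fact~\ref{psf}(2)--(3) converts pseudofinite equalities of $\nu$ into finitary approximations with error $O(|\mathbf{F}_n|^{-1/2}|G_n|)$. The only point deserving a touch of care is verifying uniformly in $(b,c)$ that ``$\nu(b^{-1}D\cap c^{-1}D)=r^L_{V,W}$'' is expressible by a bounded-complexity first-order formula in the parameters, which is immediate from the finiteness built into Fact~\ref{psf}(1).
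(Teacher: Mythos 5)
Your proposal follows essentially the same compactness/ultraproduct argument as the paper, and the main ideas are right, but two details would make the argument break as written. First, you write ``pick a small model $M\prec\mathfrak{C}$ containing $\bar{a}\bar{b}$,'' but an arbitrary such $M$ need not be contained in the ultraproduct $\prod_n\mathbf{F}_n/\mathcal{U}$, and then the parameters in the defining formulas of the $H$ and $F$ produced by Corollary~\ref{HF_bi} over $M$ need not have representing sequences $(d_n)_n$; the \L o\'s transfer you invoke in the next step would then have nothing to act on. The fix is to take $M$ to be the ultraproduct itself (which is a small elementary submodel of $\mathfrak{C}$), exactly as the paper does. Second, Corollary~\ref{HF_bi} is a statement about a pseudofinite --- hence infinite --- field, and you never verify that the ultraproduct is infinite; if it were finite the whole machinery would be inapplicable. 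The paper deals with this by observing that a finite ultraproduct would force the $|G_n|$ to be uniformly bounded, so that $H=G_n$ itself would eventually witness the conclusion for $C=n$, contradicting the choice of counterexamples. With these two points repaired, the remaining structure --- pigeonhole on formulas, application of Corollary~\ref{HF_bi} in the saturated pseudofinite field, \L o\'s transfer of $H$, $F$, and the constancy of the measure data via Fact~\ref{psf}(1), then quantitative error bounds from Fact~\ref{psf}(2)--(3) (or, as the paper packages it, from Corollary~\ref{chatzidakis_same_dimension}) --- matches the paper's proof.
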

 \begin{proof}
    Suppose the theorem does not hold. Thus in particular, for every $n\in\mathbb{N}$, taking $C=n$ does not work; picking data witnessing the failure of this, and applying the pigeonhole principle (since there are only finitely many formulas in the language of rings of length at most $M$), we may thus find formulas $\phi(x,y),\psi(x,y),\chi(x_1,x_2,x_3,y)$ of length at most $M$, prime powers $q_n,n\in\mathbb{N}$, and tuples $b_n\in\mathbf{F}_{q_n}^{|y|}$, such that $\chi(x_1,x_2,x_3,b_n)$ defines a group operation on $G_n:=\phi(\mathbf{F}_{q_n}^{|x|},b_n)$, $D_n:=\psi(\mathbf{F}_{q_n}^{|x|},b_n)$ is a subset of $G_n$, and there are no definable subgroups of $G_n$ of complexity at most $n$ that have the desired properties with respect to $D_n$ and $C=n$. Again by pigeonhole, we may assume that all $G_n$ (respectively all $D_n$) have the same (dimension, measure), by the finiteness part of Fact \ref{psf}.

    Let $\mathbf{F}$ be the ultraproduct of the $\mathbf{F}_{q_n}$ along any non-principal ultrafilter. Note that $\mathbf{F}$ must be infinite; otherwise the $q_n$ and hence the $|G_n|$ would all be bounded above, and then for some sufficiently large $n$ taking the subgroup $G_n$ itself would then give the desired properties for $C=n$. Let $\mathfrak{C}\succ\mathbf{F}$ be a saturated model and let $b\in\mathbf{F}$ be the equivalence class of $(b_n)_{n\in\omega}$ in the ultraproduct. Let $G$ be the $\mathbf{F}$-definable group given by $\phi(x,b)$ and $\chi(x_1,x_2,x_3,b)$ and let $D\subseteq G$ be defined by $\psi(x,b)$. Let $H$ and $F$ be the $\mathbf{F}$-definable sets given by Corollary \ref{HF_bi} for $G$ and $D$, say defined by formulas $\widehat{\phi}(x,d)$ and $\theta(x_1,x_2,d)$, respectively, where $d$ is the equivalence class of $(d_n)_{n\in\omega}$ in the ultraproduct. Let $N$ be the maximum of the lengths of the formulas $\widehat{\phi}$ and $\theta$, and let $C,Q$ be given by Corollary \ref{chatzidakis_same_dimension} for $N$.

    By Łoś's theorem and Fact \ref{psf}, there are ultrafilter-many $n$ such that:
    \begin{enumerate}
    \item $\widehat{\phi}(x,d_n)$ defines a normal subgroup $H_n$ of $G_n$ of index $[G:H]$.
    \item $\theta(x_1,x_2,d_n)$ defines a subset $F_n$ of $G_n\times G_n$ of dimension $<2\dim(G_n)$.
    \item For any cosets $V,W$ of $H_n$, either $b^{-1}D_n\cap c^{-1}D_n$ has strictly smaller dimension than $G_n$ for all $(b,c)\in (V\times W)\setminus F_n$, or it has the same dimension as $G_n$ and constant measure as $(b,c)$ ranges over $(V\times W)\setminus F$.
    \item For any cosets $V,W$ of $H_n$, either $D_nb\cap D_nc$ has strictly smaller dimension than $G_n$ for all $(b,c)\in (V\times W)\setminus F_n$, or it has the same dimension as $G_n$ and constant measure as $(b,c)$ ranges over $(V\times W)\setminus F$.
    \end{enumerate} By Corollary \ref{chatzidakis_same_dimension} and item 2 above, $|F_n|\leqslant C|\mathbf{F}_{q_n}|^{-1/2}|G_n|^2$ for all such $n$. Moreover, by Corollary \ref{chatzidakis_same_dimension} and items 3 and 4 above, we have
    \begin{enumerate}
        \item For any cosets $V,W$ of $H_n$, there is a rational number $r\leqslant 1$, possibly $0$, such that, for all $(b,c)\in (V\times W)\setminus F_n$, $\big||b^{-1}D_n\cap c^{-1}D_n|-r|G_n|\big|\leqslant C|\mathbf{F}_{q_n}|^{-1/2}|G_n|$.
        \item For any cosets $V,W$ of $H_n$, there is a rational number $r\leqslant 1$, possibly $0$, such that, for all $(b,c)\in (V\times W)\setminus F_n$, $\big||D_nb\cap D_nc|-r|G_n|\big|\leqslant C|\mathbf{F}_{q_n}|^{-1/2}|G_n|$
    \end{enumerate}for all such $n$. Picking such an $n$ that is larger than $\max\{C,[G:H],N\}$, we obtain the desired contradiction.
 \end{proof}
 
We can now deduce the group analogue of Tao's `algebraic regularity lemma' from Corollary \ref{HF_finitary} in the same fashion as Tao deduces it from Proposition 27 in \cite{tao}.

\begin{lemma}\label{algebraic_regularity_lemma}
    Fix any $M>0$. Then there is $C=C(M)>0$ such that the following holds for any finite field $\mathbf{F}$. If $G$ is a definable group in $\mathbf{F}$ of complexity $\leqslant M$, and $D\subseteq G$ is a definable subset of $G$ of complexity $\leqslant M$, then there is a normal subgroup $H\leqslant G$ with $[G:H]\leqslant C$ and such that the following holds. For any cosets $V,W$ of $H$, the graph $(V,W,xy^{-1}\in D)$ is weakly $C|\mathbf{F}|^{-1/4}$-regular; ie, for any $A\subseteq V,B\subseteq W$, the value $|\{(a,b)\in A\times B:ab^{-1}\in D\}|$ differs from $$|\{(v,w)\in V\times W:vw^{-1}\in D\}|\frac{|A||B|}{|V||W|}$$ by at most $C|\mathbf{F}|^{-1/4}|H|^2$.
\end{lemma}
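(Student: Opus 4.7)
Following Tao's derivation of his algebraic regularity lemma from Proposition 27 in \cite{tao}, the plan is to prove weak regularity by a spectral argument, viewing the bipartite graph $(V,W,xy^{-1} \in D)$ via its adjacency operator $M : \mathbb{C}^W \to \mathbb{C}^V$ and controlling the second singular value $\sigma_2(M)$. The first step is to strengthen Corollary \ref{HF_finitary} to a \emph{coset-restricted} form: for any $M$-definable $D$ I would produce an $M$-definable finite-index normal $H$ and an exceptional set $F$ with $\dim F < 2\dim G$ such that, for any cosets $V, W, V'$ of $H$, the value $|V' \cap Dw \cap Dw'|$ is constant up to $O_M(|\mathbf{F}|^{-1/2}|G|)$ for all $(w,w') \in W \times W$ outside $F$. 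I would prove this infinitary version by adapting Lemma \ref{const_on_generics}, including $V'$ as part of the formula $\psi$ fed to Fact \ref{indep_reals_const}; the translation-invariance step survives because in Lemma \ref{pqr strong} one can arrange that the auxiliary element $a$ lies in $G^{00}_M$, which preserves every coset of every $M$-definable finite-index subgroup of $G$. The finitary version would then follow exactly as in Corollary \ref{HF_finitary}, with the crucial quantitative refinement that Fact \ref{psf}(3) applied directly to the $(<2\dim G)$-dimensional $F$ gives an exceptional set of size $O_M(|\mathbf{F}|^{-1}|G|^2)$ --- sharper than the $O_M(|\mathbf{F}|^{-1/2}|G|^2)$ available from Corollary \ref{chatzidakis_same_dimension}, and essential for the final exponent.

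Next I would set up the spectral reduction. For cosets $V, W$ of $H$, write $D_{V,W} := D \cap VW^{-1}$; a direct calculation using normality of $H$ shows $|N(v)| = |N(w)| = |D_{V,W}|$ for every $v \in V, w \in W$, so the graph is biregular. Consequently, splitting $1_A = \delta_A 1_V + \tilde 1_A$ and $1_B = \delta_B 1_W + \tilde 1_B$ for $A \subseteq V, B \subseteq W$ causes the cross terms in $\langle 1_A, M 1_B \rangle$ to vanish, and one obtains $|\{(a,b) \in A \times B : ab^{-1} \in D\}| - \delta|A||B| = \langle \tilde 1_A, M \tilde 1_B \rangle$. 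By Cauchy--Schwarz this is at most $\sigma_2(M) \, ||\tilde 1_A||_2 \, ||\tilde 1_B||_2 \leq \sigma_2(M) \sqrt{|V||W|}$.

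For the spectral bound I would form the symmetric matrix $M^T M$ on $\mathbb{C}^W$, whose entries are $(M^T M)_{w,w'} = |V \cap Dw \cap Dw'|$. By the coset-restricted corollary with $V' = V$, these entries equal some common value $r'$ up to $O_M(|\mathbf{F}|^{-1/2}|G|)$ on all but $O_M(|\mathbf{F}|^{-1}|W|^2)$ pairs, so the good- and bad-pair contributions to $||M^T M - r' J||_F^2$ are each $O_M(|\mathbf{F}|^{-1}|V|^2|W|^2)$. Hence $||M^T M - r' J||_{\mathrm{op}} \leq ||M^T M - r' J||_F = O_M(|\mathbf{F}|^{-1/2})|V||W|$; by Weyl's inequality $\lambda_2(M^T M) \leq O_M(|\mathbf{F}|^{-1/2})|V||W|$, so $\sigma_2(M) \leq O_M(|\mathbf{F}|^{-1/4})\sqrt{|V||W|}$, and combining with the previous paragraph yields the required deviation bound $O_M(|\mathbf{F}|^{-1/4})|H|^2$.

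The main obstacle will be the first step, the coset-restricted strengthening of Corollary \ref{HF_finitary}. Once that is available, the biregularity is a group-theoretic formality and the spectral estimate a routine Frobenius-norm computation. The essential quantitative point is the dimension bound $|F|/|G|^2 = O_M(|\mathbf{F}|^{-1})$ coming from Fact \ref{psf}(3): the weaker $|\mathbf{F}|^{-1/2}$ bound from Corollary \ref{chatzidakis_same_dimension} would only yield weak regularity with exponent $|\mathbf{F}|^{-1/8}$, so tracking that $F$ genuinely has strictly smaller \emph{dimension} (and not merely strictly smaller cardinality) is exactly what makes the proof work at the claimed exponent.
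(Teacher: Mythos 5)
Your proposal is correct, but it takes a genuinely different route from the paper's. The paper deduces the lemma from Corollary \ref{HF_finitary} via two Cauchy--Schwarz claims, bounding $\sum_{g\in G}\bigl(\sum_{u\in U}1_{[ug\in D]}f(u)\bigr)^2$ for mean-zero $f$ with $\|f\|_\infty\leqslant 1$ by expanding the square into $\sum_{(u,v)\in U\times U}|u^{-1}D\cap v^{-1}D|f(u)f(v)$ and controlling this via the entrywise $\ell_1$-norm (using $\|f\|_\infty\leqslant 1$ rather than $\|f\|_2$); the outer variable $g$ ranges over all of $G$, which is what lets the paper get by with the \emph{un}restricted Corollary \ref{HF_finitary} and never invoke biregularity. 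Your route instead restricts the middle vertex to a fixed coset $V$, diagonalizes the biregular bipartite graph, and runs a Frobenius-norm estimate on $M^TM-r'J$; this requires the coset-restricted strengthening of Corollary \ref{HF_finitary} that you sketch, and your outline for obtaining it (absorbing $1_{V'}$ into $\psi$ in Fact \ref{indep_reals_const}, and noting the translating element $a$ produced in Lemma \ref{pqr strong} lies in $G^{00}_M$ so fixes every coset of every $M$-definable finite-index subgroup) is sound. Your spectral computation is also correct, and your observation that the Frobenius route needs the genuinely dimension-theoretic bound $|F|=O_M(|\mathbf{F}|^{-1}|G|^2)$ (available from Fact \ref{psf}(3) since $\dim F<2\dim G$, though Corollary \ref{HF_finitary} only states $|\mathbf{F}|^{-1/2}|G|^2$) is a real quantitative point \emph{for your approach}: squaring the deviations makes the bad-pair contribution $|F|\cdot|G|^2$ rather than $|F|\cdot|G|$. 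However, your closing sentence overstates this -- the sharper bound on $|F|$ is \emph{not} ``exactly what makes the proof work at the claimed exponent'' in any absolute sense, since the paper's $\ell_1$ route reaches $|\mathbf{F}|^{-1/4}$ with only the stated $|\mathbf{F}|^{-1/2}|G|^2$ bound on $|F|$. One small historical correction: the derivation in \cite{tao} from Proposition 27 is a Cauchy--Schwarz argument like the paper's; the spectral argument you describe is closer to the one in \cite{tao_blog}. What your approach buys is a more structural picture (the second singular value controls regularity) and a direct link to the Fourier-theoretic Corollary \ref{main_theorem_fourier}; what the paper's approach buys is that it requires strictly less input -- no coset-restricted corollary, no biregularity, and no sharpening of the $|F|$ bound.
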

\begin{proof}
    Fix $M>0$ and let $C_1$ be given by Corollary \ref{HF_finitary}. Suppose that $\mathbf{F}$ is a finite field, that $G$ is a definable group in $\mathbf{F}$ of complexity $\leqslant M$, and that $D\subseteq G$ is a definable set of complexity $\leqslant M$. Let $H$ be given by Corollary \ref{HF_finitary}.

\begin{claim}\label{mean_0_individual}
    There is a constant $C_2$ depending only on $M$ such that, for any $U$ a coset of $H$, and any $f:U\to\mathbb{R}$ of mean $0$ and with $||f||_\infty\leqslant 1$,
    \begin{align*}
        &\sum_{g\in G}\left(\sum_{u\in U}1_{[ug\in D]}f(u)\right){}^2\text{ and }\sum_{g\in G}\left(\sum_{u\in U}1_{[gu^{-1}\in D]}f(u)\right){}^2
    \end{align*}are each bounded above by $C_2|\mathbf{F}|^{-1/2}|H|^3$.
\end{claim}
\begin{proof}
    We show the first bound, as the second bound is proved symmetrically using condition (2) of Corollary \ref{HF_finitary}; let $\Delta$ be the quantity we wish to bound. Expanding out the square in the sum gives $$\Delta=\sum_{g\in G}\sum_{u,v\in U}1_{[ug\in D]}1_{[vg\in D]}f(u)f(v),$$ which is just $\sum_{(u,v)\in U\times U}|u^{-1}D\cap v^{-1}D|f(u)f(v)$. By condition (1) of Corollary \ref{HF_finitary}, there is some rational $r\leqslant 1$ such that the set $$F:=\{(u,v)\in U\times U:\left||u^{-1}D\cap v^{-1}D|-r|G|\right|>C_1|\mathbf{F}|^{-1/2}|G|\}$$ has size $\leqslant C_1|\mathbf{F}|^{-1/2}|G|^2$. Now, since $||f||_\infty\leqslant 1$, $\big|r|G|f(u)f(v)-|u^{-1}D\cap v^{-1}D|f(u)f(v)\big|$ is bounded above by $C_1|\mathbf{F}|^{-1/2}|G|$ for all $(u,v)\in (U\times U)\setminus F$, whence $$\bigg|\sum_{(u,v)\in (U\times U)\setminus F}r|G|f(u)f(v)-\sum_{(u,v)\in (U\times U)\setminus F}|u^{-1}D\cap v^{-1}D|f(u)f(v)\bigg|$$ is bounded above by $|(U\times U)\setminus F|C_1|\mathbf{F}|^{-1/2}|G|\leqslant C_1|\mathbf{F}|^{-1/2}|G|^3\leqslant C_1^4|\mathbf{F}|^{-1/2}|H|^3$, where in the last inequality we used that $|G|\leqslant C_1|H|$.
    
    Also, since $D\subseteq G$ and $r\leqslant 1$, we have $\left||u^{-1}D\cap v^{-1}D|-r|G|\right|\leqslant |G|$ for all $(u,v)\in G\times G$, and in particular for all $(u,v)\in F$, so that $$\bigg|\sum_{(u,v)\in F}r|G|f(u)f(v)-\sum_{(u,v)\in  F}|u^{-1}D\cap v^{-1}D|f(u)f(v)\bigg|$$ is bounded above by $|F||G|\leqslant C_1|\mathbf{F}|^{-1/2}|G|^2|G|\leqslant C_1^4|\mathbf{F}|^{-1/2}|H|^3$, where in the last inequality we again used that $|G|\leqslant C_1|H|$.
    
    Taking $C_2=2C_1^4$ and applying the triangle inequality to the two estimates above gives that $\Delta$ differs from $\sum_{(u,v)\in U\times U}r|G|f(u)f(v)$ by at most $C_2|\mathbf{F}|^{-1/2}|H|^3$. But $f$ has mean $0$ so the latter sum is $0$ and the claim follows.
\end{proof}

\begin{claim}\label{mean_0_both}
    There is a constant $C_3$ depending only on $M$ such that, for any cosets $V,W$ of $H$, and any $f:V\to\mathbb{R}$ and $g:W\to\mathbb{R}$ with $||f||_\infty,||g||_\infty\leqslant 1$, if at least one of $f,g$ has mean $0$ then $|\sum_{(v,w)\in V\times W}1_{[vw^{-1}\in D]}f(v)g(w)|\leqslant C_3|\mathbf{F}|^{-1/4}|H|^2$.
\end{claim}

\begin{proof}
    Suppose for instance that $f$ has mean $0$; the other case is symmetric using the other half of Claim \ref{mean_0_individual}. We have
    \begin{align*}
    \left(\sum_{(v,w)\in V\times W}f(v)g(w)1_{[vw^{-1}\in D]}\right){}^2&=\left(\sum_{w\in W}\left(\sum_{v\in V}f(v)g(w)1_{[vw^{-1}\in D]}\right)\right){}^2\\
    &\leqslant |W|\sum_{w\in W}\left(\sum_{v\in V}f(v)g(w)1_{[vw^{-1}\in D]}\right){}^2 \\
    &=|W|\sum_{w\in W}g(w)^2\left(\sum_{v\in V}f(v)1_{[vw^{-1}\in D]}\right){}^2 \\
    &\leqslant |W|\sum_{w\in W}\left(\sum_{v\in V}f(v)1_{[vw^{-1}\in D]}\right){}^2 \\
    &\leqslant |W|\sum_{w\in G}\left(\sum_{v\in V}f(v)1_{[vw^{-1}\in D]}\right){}^2
\end{align*} where the first inequality follows from Cauchy-Schwarz, and the second follows from the fact that $||g||_\infty\leqslant 1$. But now $f$ has mean $0$, so that by the first bound in Claim \ref{mean_0_individual} the left hand side is bounded above by $|W|C_2|\mathbf{F}|^{-1/2}|H|^3= C_2|\mathbf{F}|^{-1/2}|H|^4$. So taking $C_3=\sqrt{C_2}$ works.
\end{proof}

Let $C_3$ be given by Claim 2. Suppose that $V,W$ are cosets of $H$ and that $A\subseteq V$ and $B\subseteq W$. Define $f:V\to \mathbb{R}$ and $g:W\to\mathbb{R}$ by $f(v)=1_{A}(v)-|A|/|H|$ and $g(w)=1_{B}(w)-|B|/|H|$. By Claim 2, $\sum_{(v,w)\in V\times W}\frac{|A|}{|H|}g(w)1_{[vw^{-1}\in D]}$ and $\sum_{(v,w)\in V\times W}f(v)\frac{|B|}{|H|}1_{[vw^{-1}\in D]}$ and $\sum_{(v,w)\in V\times W}f(v)g(w)1_{[vw^{-1}\in D]}$ are all bounded in magnitude by $C_3|\mathbf{F}^{-1/4}||H|^2$. Also clearly $\sum_{(v,w)\in V\times W}\frac{|A|}{|H|}\frac{|B|}{|H|}1_{[vw^{-1}\in D]}=|\{(v,w)\in V\times W:vw^{-1}\in D\}|\frac{|A||B|}{|V||W|}$. But the sum of those four terms is 
\begin{align*}\sum_{(v,w)\in V\times W}\left(f(v)+\frac{|A|}{|H|}\right)\left(g(w)+\frac{|B|}{|H|}\right)1_{[vw^{-1}\in D]}&=\sum_{(v,w)\in V\times W}1_A(v)1_B(w)1_{[vw^{-1}\in D]}\\&=|\{(a,b)\in A\times B:ab\in D\}|.
\end{align*}Thus, applying the triangle inequality to the sum of the three terms bounded above, $|\{(a,b)\in A\times B:ab\in D\}|$ differs from $|\{(v,w)\in V\times W:vw^{-1}\in D\}|\frac{|A||B|}{|V||W|}$ by at most $3C_3|\mathbf{F}|^{-1/4}|H|^2$, and so taking $C=3C_3$ gives the result.
\end{proof}

\section{Sharper quasirandomness bounds}
By Fact \ref{qr_thm}, Lemma \ref{algebraic_regularity_lemma} immediately implies the following:

\begin{lemma}\label{qr_weak_bound}
    For any $M$, there is a positive constant $C>0$ such that the following holds. Suppose $\mathbf{F}$ is a finite field, and that $G$ is a definable group in $\mathbf{F}$ and $D\subseteq G$ is a definable subset, both of complexity at most $M$. Then there is a definable normal subgroup $H\leqslant G$, of index and complexity at most $C$, such that, for any cosets $V,W$ of $H$, the bipartite graph $(V,W,xy^{-1}\in D)$ is $C|\mathbf{F}|^{-1/4}$-quasirandom.
\end{lemma}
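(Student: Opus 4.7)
The plan is to directly combine the previous two named results: use Lemma \ref{algebraic_regularity_lemma} to get a subgroup witnessing weak regularity, then quote Fact \ref{qr_thm} to upgrade weak regularity to quasirandomness with only a constant-factor loss.

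More concretely, first I would apply Lemma \ref{algebraic_regularity_lemma} to the given $M$ to obtain a constant $C_1 = C_1(M) > 0$. For any finite field $\mathbf{F}$ and any $G,D$ of complexity at most $M$, this produces a normal subgroup $H \leqslant G$ of index and complexity at most $C_1$ such that, for every pair of cosets $V, W$ of $H$, the bipartite graph $(V, W, xy^{-1} \in D)$ is weakly $C_1|\mathbf{F}|^{-1/4}$-regular. That is exactly condition (2) in Fact \ref{qr_thm} with $\varepsilon_2 = C_1|\mathbf{F}|^{-1/4}$.

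Next I would invoke the implication "condition (2) $\Rightarrow$ condition (1)" in Fact \ref{qr_thm}, which gives $\varepsilon_1$-quasirandomness with $\varepsilon_1 = 12 \varepsilon_2 = 12 C_1 |\mathbf{F}|^{-1/4}$. Setting $C := \max(C_1, 12 C_1) = 12 C_1$ then yields the desired bound on both the index/complexity of $H$ and the quasirandomness parameter of each bipartite graph $(V, W, xy^{-1} \in D)$.

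There is no real obstacle; the content of the lemma is entirely in the preceding Lemma \ref{algebraic_regularity_lemma} (which is where all the pseudofinite and simple-theory machinery was used) together with the purely combinatorial implication in Fact \ref{qr_thm}. The only thing to check is bookkeeping of constants, namely that the single constant $C = 12 C_1$ simultaneously controls the index of $H$, the complexity of $H$, and the quasirandomness parameter, which is immediate since $C_1 \leqslant 12 C_1$.
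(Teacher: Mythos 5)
Your argument is exactly the paper's: the paper states Lemma \ref{qr_weak_bound} as an immediate consequence of Lemma \ref{algebraic_regularity_lemma} combined with Fact \ref{qr_thm}, which is precisely the deduction you spell out. The constant bookkeeping (taking $C = 12C_1$) is correct and matches the explicit bound $\varepsilon_1 = 12\varepsilon_2$ recorded in Fact \ref{qr_thm}.
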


However, with some computation, the nature of the error bounds in Fact \ref{psf} allows us to get the following improvement; all hypotheses are the same, and the difference is the better bound on the degree of quasirandomness in the conclusion. We remark that nothing in the proof uses that we are working in the setting of definable groups, and the same argument also works in the setting of \cite{tao} to give $C|\mathbf{F}|^{-1/2}$-quasirandomness of the corresponding graphs there.

\begin{theorem}\label{main_theorem}
    For any $M$, there is a positive constant $K>0$ such that the following holds. Suppose $\mathbf{F}$ is a finite field, and that $G$ is a definable group in $\mathbf{F}$ and $D\subseteq G$ is a definable subset, both of complexity at most $M$. Then there is a definable normal subgroup $H\leqslant G$, of index and complexity at most $K$, such that, for any cosets $V,W$ of $H$, the bipartite graph $(V,W,xy^{-1}\in D)$ is $K|\mathbf{F}|^{-1/2}$-quasirandom.
\end{theorem}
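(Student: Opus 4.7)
My plan is to prove the theorem by establishing condition~3 of Fact~\ref{qr_thm} --- the normalized $L^2$ operator-norm bound --- and then invoking the corresponding implication in Fact~\ref{qr_thm}, which upgrades $\varepsilon_3$ to $\varepsilon_1=\delta\varepsilon_3^2\leqslant\varepsilon_3^2$. The weak-regularity route taken in Lemma~\ref{qr_weak_bound} loses a square root in going from $\varepsilon_2$ to $\varepsilon_1$ and so only gives $O(|\mathbf{F}|^{-1/4})$; the operator-norm route will give the sharper $O(|\mathbf{F}|^{-1/2})$ provided one can establish $\varepsilon_3=O(|\mathbf{F}|^{-1/4})$.

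The key observation enabling this is that the exceptional set $F$ of Corollary~\ref{HF_finitary} satisfies the sharper bound $|F|\leqslant C|\mathbf{F}|^{-1}|G|^2$, not merely the $O(|\mathbf{F}|^{-1/2}|G|^2)$ stated there. Indeed, $F$ is the specialization to $\mathbf{F}$ of a definable set of strict dimension drop $\dim F<2\dim G$ (coming from Corollary~\ref{HF_bi}), so Fact~\ref{psf}(3) yields $|F|\leqslant C|\mathbf{F}|^{2\dim G-1}=O(|\mathbf{F}|^{-1}|G|^2)$, improving on what one gets by routing through Corollary~\ref{chatzidakis_same_dimension}. With this in hand, I would prove an $L^2$ version of Claim~\ref{mean_0_individual}: for $H$ given by Corollary~\ref{HF_finitary}, any coset $V$ of $H$, and any mean-zero $f:V\to\mathbb{R}$,
\[\sum_{g\in G}\bigg(\sum_{v\in V}1_D(vg)f(v)\bigg)^{\!2}\leqslant K|\mathbf{F}|^{-1/2}|H|^2\,||f||_2^2.\]
Expanding the square gives $\sum_{v,v'}f(v)f(v')|v^{-1}D\cap v'^{-1}D|$; writing $|v^{-1}D\cap v'^{-1}D|=r|G|+E(v,v')$, the $r|G|$ term vanishes by mean-zero, and a single Cauchy-Schwarz bounds the remainder by $||f||_2^2\sqrt{\sum_{v,v'}E(v,v')^2}$. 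The contribution to $\sum E^2$ from $(v,v')\notin F$ is at most $|V|^2\cdot O(|\mathbf{F}|^{-1}|G|^2)$, and from $(v,v')\in F$ is at most $|F|\cdot|G|^2\leqslant O(|\mathbf{F}|^{-1}|G|^4)$; both are $O(|\mathbf{F}|^{-1}|H|^4)$, so the square root is $O(|\mathbf{F}|^{-1/2}|H|^2)$, yielding the stated bound.

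Restricting the outer sum to $g=w^{-1}$ for $w\in W$ (all summands being nonnegative) then gives $||Mf||_2^2\leqslant K|\mathbf{F}|^{-1/2}|V||W|\,||f||_2^2$ for any mean-zero $f$, where $M$ is the adjacency matrix of $(V,W,xy^{-1}\in D)$; this is condition~3 of Fact~\ref{qr_thm} with $\varepsilon_3=O(|\mathbf{F}|^{-1/4})$, hence condition~1 with $\varepsilon_1\leqslant\varepsilon_3^2=O(|\mathbf{F}|^{-1/2})$, as required. The one place requiring care --- and the main obstacle --- is recognizing that the $|\mathbf{F}|^{-1}$ bound on $|F|$ is available directly from the strict dimension drop via Fact~\ref{psf}(3); once this is in hand, the $L^2$-strengthening of Claim~\ref{mean_0_individual} is a direct Cauchy-Schwarz computation, and the improvement in the quasirandomness exponent just amounts to invoking the right implication of Fact~\ref{qr_thm}.
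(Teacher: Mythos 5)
Your proposal is correct, and it takes a genuinely different route from the paper's. The paper first establishes weak $O(|\mathbf{F}|^{-1/4})$-regularity and hence $O(|\mathbf{F}|^{-1/4})$-quasirandomness (Lemma~\ref{qr_weak_bound}, via the $L^\infty$ bound of Claim~\ref{mean_0_individual}), and then \emph{bootstraps} to $O(|\mathbf{F}|^{-1/2})$: it observes that both $\delta^4$ and the normalized $4$-cycle count $|Y|/|H|^4$ are definable quantities, hence $O(|\mathbf{F}|^{-1/2})$-close by Corollary~\ref{chatzidakis_same_dimension} to rationals $r,s$ from a fixed finite set $Q$, so the coarse $O(|\mathbf{F}|^{-1/4})$ bound forces $r=s$ for $|\mathbf{F}|$ large, and then the two fine estimates combine via the triangle inequality. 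You instead establish condition~3 of Fact~\ref{qr_thm} directly with $\varepsilon_3=O(|\mathbf{F}|^{-1/4})$ by upgrading Claim~\ref{mean_0_individual} to an $L^2$ (Hilbert--Schmidt) bound, then harvest the quadratic gain $\varepsilon_1=\delta\varepsilon_3^2$ built into the implication $3\Rightarrow 1$. Your key observation is correct and is exactly what is needed: since the exceptional set $F$ in Corollary~\ref{HF_finitary} arises as the specialization of a definable set of strict dimension drop, Fact~\ref{psf}(3) gives $|F|=O(|\mathbf{F}|^{-1}|G|^2)$, a full power better than what routing through Corollary~\ref{chatzidakis_same_dimension} records; with only the stated $O(|\mathbf{F}|^{-1/2}|G|^2)$ bound, the $F$-contribution to $\sum_{v,v'}E(v,v')^2$ would dominate and one would recover only $\varepsilon_1=O(|\mathbf{F}|^{-1/4})$, no improvement. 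One small economy of your argument is that it uses only the first of the two conditions of Corollary~\ref{HF_finitary}, whereas the weak-regularity route in Lemma~\ref{algebraic_regularity_lemma} needs both; and your route gives directly what Corollary~\ref{main_theorem_fourier} is after. Your argument is essentially the arithmetic analogue of Tao's spectral-theory proof of the algebraic regularity lemma from \cite{tao_blog}, whereas the paper's self-improvement via rigidity of the Chatzidakis--van den Dries--Macintyre measures is a different device.
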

\begin{proof}
    It suffices to find a $K$ that works for $\mathbf{F}$ sufficiently large, since then there are only finitely many possible exceptions and we can account for them by increasing $K$.

    First let $C$ be given by Lemma \ref{qr_weak_bound} for $M$. Now fix $\mathbf{F}$, $G$, and $D\subseteq G$ as in the theorem hypotheses, and let $H$ be given by the lemma. So $H$ has complexity at most $C$. Now fix $V,W$ cosets of $H$; then $V,W$ have complexity at most $O_C(1)$. Since $C=O_M(1)$ depends only on $M$, it follows that the following sets are definable of complexity $O_M(1)$; fix some $M'$ depending only on $M$ such that all of the following have complexity at most $M'$:
    \begin{align*}
        &(V\times W)^4=(V\times W)\times (V\times W)\times (V\times W)\times (V\times W), \\
        &X:=\{((v,w)\in V\times W:vw^{-1}\in D)\}\text{ and }X^4=X\times X\times X\times X,\\
        &Y:=\{(v_1,v_2,w_1,w_2):w_1,w_2\in W,v_1,v_2\in V\cap Dw_1\cap Dw_2\}.
    \end{align*} Now, we have $|V|=|W|=|H|$. So $|X|=\delta|H|^2$, where $\delta$ is the edge density of the graph $(V,W,xy^{-1}\in D)$, and hence $|X^4|=\delta^4|H|^8$.
    
    Also, $|Y|=\sum_{w_1,w_2\in W}|N_{w_1}\cap N_{w_2}|^2$, where $N_{w_i}\subseteq V$ is the set of $v\in V$ connected to $w_i$ in $(V,W,xy^{-1}\in D)$. By Lemma \ref{qr_weak_bound}, and the definition of quasirandomness, we hence have that $$\left||Y|-\delta^4|H|^4\right|\leqslant C|\mathbf{F}|^{-1/4}|H|^4$$ and hence $\left||Y|/|H|^4-\delta^4\right|\leqslant C|\mathbf{F}|^{-1/4}$; call this inequality (1).

    Now, let $C'$ and $Q\subseteq[0,1]$ be given for $M'$ by Fact \ref{chatzidakis_same_dimension}. By Fact \ref{chatzidakis_same_dimension} applied to $X^4\subseteq (V\times W)^4$ and $Y\subseteq V^2\times W^2$, and again using $|V|=|W|=|H|$, there are thus $r,s\in Q$ such that \begin{align*}
        \left||X|^4-r|H|^8\right|&\leqslant C'|\mathbf{F}|^{-1/2}|H|^8\text{ and}\ \\
        \left||Y|-s|H|^4\right|&\leqslant C'|\mathbf{F}|^{-1/2}|H|^4,
    \end{align*} whence $\left||X|^4/|H|^8-r\right|\leqslant C'|\mathbf{F}|^{-1/2}$, ie $|\delta^4-r|\leqslant C'|\mathbf{F}|^{-1/2}$, and $\left||Y|/|H|^4-s\right|\leqslant C'|\mathbf{F}|^{-1/2}$; call these latter two inequalities (2) and (3) respectively.

    Using inequalities (1), (2), and (3), and the triangle inequality, we have the bound $|r-s|\leqslant C|\mathbf{F}|^{-1/4}+2C'|\mathbf{F}|^{-1/2}$. On the other hand, $r,s$ are coming from a finite set $Q$ of rational numbers that depends only on $M'$ and hence only on $M$, and $C',C$ depend only on $M',M$ and hence only on $M$; thus, when $\mathbf{F}$ is sufficiently large, this forces $r=s$. So assume $\mathbf{F}$ is sufficiently large and hence that $r=s$.
    
    Our inequalities (2) and (3) now become $|\delta^4-r|\leqslant C'|\mathbf{F}|^{-1/2}$ and $\left||Y|/|H|^4-r\right|\leqslant C'|\mathbf{F}|^{-1/2}$; by the triangle inequality, thus $\left||Y|/|H|^4-\delta^4\right|\leqslant 2C'|\mathbf{F}|^{-1/2}$. But this means precisely that $(V,W,xy^{-1}\in D)$ is $2C'|\mathbf{F}|^{-1/2}$-quasirandom. Thus taking $K=2C'$ works for sufficiently large $\mathbf{F}$, and since $C'$ depended only on $M'$ and hence only on $M$ we are done.
\end{proof}

Now by Corollary \ref{qr_subsets_of_groups_2}, and noting that the graph $(H,H,xy^{-1}\in (gD\cap H))$ is isomorphic to the graph $(g^{-1}H,H,xy^{-1}\in D)$, we obtain the following.

\begin{corollary}\label{main_theorem_fourier}
    For any $M$, there is a positive constant $K>0$ such that the following holds. Suppose $\mathbf{F}$ is a finite field, and that $G$ is a definable group in $\mathbf{F}$ and $D\subseteq G$ is a definable subset, both of complexity at most $M$. Then there is a definable normal subgroup $H\leqslant G$, of index and complexity at most $K$, such that, for any $g\in G$, the intersection $Dg\cap H$ is a $K|\mathbf{F}|^{-1/8}$-quasirandom subset of $H$.
 \end{corollary}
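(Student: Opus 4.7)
The plan is to reduce this directly to Theorem \ref{main_theorem} via the isomorphism of graphs hinted in the paper, and then apply Corollary \ref{qr_subsets_of_groups_2} to convert graph quasirandomness into quasirandomness of the subset $Dg \cap H$ of $H$.

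First I would fix $M$, apply Theorem \ref{main_theorem} to obtain a constant $K_0$ and a normal subgroup $H \leqslant G$ of index and complexity at most $K_0$ such that for any cosets $V,W$ of $H$ the graph $(V,W,xy^{-1}\in D)$ is $K_0|\mathbf{F}|^{-1/2}$-quasirandom. I will use this same $H$ in the corollary. Now fix $g \in G$ and consider the bipartite graph $(H,H,xy^{-1}\in Dg\cap H)$. Since both vertex classes are inside $H$, we have $xy^{-1}\in H$ automatically, so the edge condition simplifies to $xy^{-1}\in Dg$.

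Next I would exhibit the graph isomorphism. The map $(x,y)\mapsto (x,g^{-1}y)$ sends $H\times H$ to $H\times (g^{-1}H)$ bijectively. Note that $g^{-1}H$ is also a coset of $H$, since $H$ is normal (in fact the argument only needs that the right cosets partition $G$). Under this substitution with $y' = g^{-1}y$, we have $xy^{-1} = x(gy')^{-1} = xy'^{-1}g^{-1}$... actually let me redo this carefully: $y = gy'$, so $xy^{-1} = x(gy')^{-1} = xy'^{-1}g^{-1}$, which lies in $Dg$ iff $xy'^{-1}g^{-1}\in Dg$ iff $xy'^{-1}\in Dg\cdot g = Dg^2$; that's not quite what I want. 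Instead, take the bijection $(x,y) \mapsto (x, yg)$ from $H \times H$ to $H \times Hg$. Then with $y' = yg$, we get $xy^{-1} = x(y'g^{-1})^{-1} = xgy'^{-1}$, so $xy^{-1}\in Dg$ iff $xgy'^{-1}\in Dg$ iff $xy'^{-1}\in D$. Thus $(H,H,xy^{-1}\in Dg)$ is isomorphic to $(H,Hg,xy^{-1}\in D)$, which is a graph between two cosets of $H$.

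Applying Theorem \ref{main_theorem} then shows that $(H,H,xy^{-1}\in Dg\cap H)$ is $K_0|\mathbf{F}|^{-1/2}$-quasirandom. Finally, by Corollary \ref{qr_subsets_of_groups_2} (the direction: $\varepsilon'$-quasirandomness of the Cayley-style graph implies $(\varepsilon')^{1/4}$-quasirandomness of the subset), we conclude that $Dg\cap H$ is a $(K_0|\mathbf{F}|^{-1/2})^{1/4} = K_0^{1/4}|\mathbf{F}|^{-1/8}$-quasirandom subset of $H$. Taking $K = K_0^{1/4}$ (and absorbing $K_0$ into $K$ for the index/complexity bound) finishes the proof. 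There is no real obstacle here — the content is entirely in Theorem \ref{main_theorem} and Corollary \ref{qr_subsets_of_groups_2}; the only thing to be slightly careful about is choosing the correct left/right translation so that $Dg$ (rather than $gD$) appears, which is why the substitution uses $y \mapsto yg$ rather than $x \mapsto gx$.
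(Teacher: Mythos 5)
Your strategy is exactly the paper's: exhibit a graph isomorphism relating $(H,H,\,xy^{-1}\in Dg\cap H)$ to a bipartite graph between cosets of $H$ with edge relation $xy^{-1}\in D$, invoke Theorem \ref{main_theorem} to get $K_0|\mathbf{F}|^{-1/2}$-quasirandomness of that graph, and finish with Corollary \ref{qr_subsets_of_groups_2}. (The paper records the analogous isomorphism $(H,H,\,xy^{-1}\in gD\cap H)\cong (g^{-1}H,H,\,xy^{-1}\in D)$.) However, the specific change of variables you wrote down is incorrect in a non-abelian group, which is precisely the step you flagged as needing care.

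You set $y' = yg$ and assert $xgy'^{-1}\in Dg$ iff $xy'^{-1}\in D$. This is false: multiplying $xgy'^{-1}\in Dg$ on the right by $g^{-1}$ gives $xgy'^{-1}g^{-1}\in D$, which differs from $xy'^{-1}\in D$ by a conjugation by $g$. Concretely, with $G=S_3$, $D=\{(12)\}$, $g=(123)$, $x=e$, $y=(23)$: one has $xy^{-1}=(23)=(12)(123)\in Dg$, yet $y'=yg=(13)$ and $xy'^{-1}=(13)\notin D$, so your purported isomorphism does not preserve edges. The correct substitution is $y'=gy$, i.e.\ $(x,y)\mapsto(x,gy)$, mapping $H\times H$ bijectively to $H\times gH$. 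Then $y^{-1}=y'^{-1}g$, so $xy^{-1}=xy'^{-1}g$, and $xy'^{-1}g\in Dg$ iff $xy'^{-1}\in D$, giving $(H,H,\,xy^{-1}\in Dg\cap H)\cong(H,gH,\,xy^{-1}\in D)$. With that one-line repair the remainder of your argument (Theorem \ref{main_theorem} applied to the cosets $H$ and $gH$, then Corollary \ref{qr_subsets_of_groups_2}, then $K=\max(K_0,K_0^{1/4})$) is correct and coincides with the paper's proof.
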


 \section{Algebraic groups}\label{special_cases_sec}
In this section we discuss some special cases of Theorem \ref{main_theorem}, and remark on their connections with the results from \cite{green} and \cite{gowers_groups} mentioned in Section \ref{qr_prelim_section}. We will focus on algebraic groups, which are in any case the main definable groups of interest in finite fields; in fact, by Theorem C of \cite{hrushovski_pillay}, any definable group in a pseudofinite field $\mathbf{F}$ admits a virtual isogeny to the $\mathbf{F}$-points of an algebraic group defined over $\mathbf{F}$.

Throughout the above paper, we were using $G$ to refer to an abstract group, but now we will change notation and use $G$ to refer to an algebraic group, and use $G(\mathbf{F})$ to refer to the $\mathbf{F}$-points of $G$ if $\mathbf{F}$ is a field over which $G$ is defined. For convenience we will work with the `classical' (ie non-scheme-theoretic) perspective, so that we identify an algebraic variety $V$ over a field $\mathbf{F}$ with its $\mathbf{K}$-points $V(\mathbf{K})$, where $\mathbf{K}$ is an algebraically closed field containing $\mathbf{F}$. If $G$ is a linear algebraic group over a field $\mathbf{F}$, by the complexity of $G$ we mean the number of symbols in the system of polynomial equations defining $G$.

There is a general setting in which one does not need to pass to a subgroup in Theorem \ref{main_theorem} to obtain quasirandomness, which is that of simply connected algebraic groups. Let us briefly recall a few definitions. If $\mathbf{F}$ is a field, and $G_1,G_2$ are connected algebraic groups over $\mathbf{F}$, then a (central) isogeny over $\mathbf{F}$ from $G_1$ to $G_2$ is a morphism of algebraic groups over $\mathbf{F}$ from $G_1$ to $G_2$ that is surjective and has finite (central) kernel. If $G$ is a connected algebraic group over $\mathbf{F}$, then it is simply connected (as an algebraic group) if, for every connected algebraic group $G'$ over $\mathbf{F}$, any central isogeny $G'\to G$ over $\mathbf{F}$ is an isomorphism. For example, $\mathrm{SL}_2$ is simply connected as an algebraic group over any field.

From \cite{hrushovski_pillay_2} we have the following:

\begin{fact}\label{simply_connected_gives_def_connected}
    Suppose $G$ is a connected, simply connected algebraic group over a pseudofinite field $\mathbf{F}$. Then $G(\mathbf{F})$ is a definably connected group in $\mathbf{F}$, ie it has no proper finite-index subgroup definable in $\mathbf{F}$.
\end{fact}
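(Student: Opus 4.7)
My plan is to derive a contradiction from the existence of a proper definable finite-index subgroup $H \leq G(\mathbf{F})$. First I would pass to the intersection of the finitely many conjugates of $H$, which reduces to the case of $H$ normal, so that the quotient map $\pi: G(\mathbf{F}) \twoheadrightarrow A := G(\mathbf{F})/H$ is a definable surjective homomorphism onto a non-trivial finite group $A$. The strategy is then to extract from $\pi$ a non-trivial central $\mathbf{F}$-isogeny $\tilde{G} \to G$, which contradicts the simple connectedness of $G$.

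Second, I would use a saturation/specialization argument, much as in the passage from Corollary \ref{HF_bi} to Corollary \ref{HF_finitary}, to reduce to the finite-field setting: realize $(\mathbf{F},G,H)$ as an ultralimit of a sequence $(\mathbf{F}_{q_n},G_n,H_n)$, where each $G_n$ is a connected, simply connected algebraic group over the finite field $\mathbf{F}_{q_n}$ and each $H_n$ is a proper normal definable finite-index subgroup of $G_n(\mathbf{F}_{q_n})$, with indices and defining complexities bounded uniformly in $n$. The main theorem of \cite{hrushovski_pillay_2} then applies to each $H_n$: it is commensurable with $G'_n(\mathbf{F}_{q_n})$ for some algebraic subgroup $G'_n \leqslant G_n$ defined over $\mathbf{F}_{q_n}$. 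Combining the bounded-index hypothesis with the agreement of pseudofinite and algebraic dimension (Fact \ref{psf}) forces $\dim G'_n = \dim G_n$, and connectedness of $G_n$ then gives $G'_n = G_n$. So the ``algebraic shadow'' of $H_n$ is already all of $G_n$, and any residual finite quotient $G_n(\mathbf{F}_{q_n})/H_n$ must come from the Galois cohomology of the kernel of a central $\mathbf{F}_{q_n}$-isogeny $\tilde{G}_n \to G_n$, via the classical Lang--Steinberg mechanism.

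Finally, since $G_n$ is simply connected, every such central isogeny over $\mathbf{F}_{q_n}$ is trivial by definition, so the cohomological contribution vanishes too: $|G_n(\mathbf{F}_{q_n})/H_n| = 1$ for all sufficiently large $n$, and the ultralimit yields $H = G(\mathbf{F})$, contradicting the assumption that $H$ was proper. The hard part of the plan is the middle step, which really amounts to reading off, from the structural theorems of \cite{hrushovski_pillay_2}, the identification of bounded-complexity definable finite quotients of $G_n(\mathbf{F}_{q_n})$ with the cohomological quotients attached to $\mathbf{F}_{q_n}$-rational central isogenies; the rest is routine compactness together with standard facts about simply connected groups over finite fields.
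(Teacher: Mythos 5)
The paper offers no proof of this statement: Fact \ref{simply_connected_gives_def_connected} is quoted directly from \cite{hrushovski_pillay_2} with no argument supplied. Judged as an independent argument, your sketch has a gap that you partially flag yourself. The commensurability theorem of \cite{hrushovski_pillay_2} applied to a bounded-index subgroup $H_n$ only returns what you already assumed: forcing $\dim G'_n = \dim G_n$ and then $G'_n = G_n$ by connectedness merely reproduces the hypothesis that $H_n$ has finite index in $G_n(\mathbf{F}_{q_n})$. What is actually at stake is controlling the finite quotient $G_n(\mathbf{F}_{q_n})/H_n$, and the assertion that every such quotient (for $H_n$ definable of bounded complexity) is the cohomological quotient attached to some $\mathbf{F}_{q_n}$-rational central isogeny is not something one can ``read off'' from commensurability together with Lang--Steinberg; it is essentially the theorem being proved, its proof requires the finer structure theory of definable subgroups developed in \cite{hrushovski_pillay_2}, and your plan does not reproduce it.

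Two smaller points. The detour through finite fields is unnecessary and not as routine as you suggest: ``connected, simply connected over $\mathbf{F}$'' is a universal ($\Pi$-type) condition, so it does not automatically pass from the pseudofinite ultraproduct to ultrafilter-many factors by Łoś; one must first argue that any witnessing nontrivial central $\mathbf{F}$-isogeny into $G$ has complexity bounded in terms of that of $G$ (say via finiteness of the geometric fundamental group), a step your plan does not address. The reference instead works directly over pseudofinite fields, using Lang's theorem in the form valid for perfect PAC fields with procyclic absolute Galois group rather than the finite-field Lang--Steinberg statement. Your reduction to $H$ normal is correct and harmless.
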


The following is a standard consequence:

\begin{lemma}\label{simply connected gives definably connected, finite case}
    Fix a natural number $C$. Then there is some $N$ such that, for any prime power $q\geqslant N$, and any connected, simply connected linear algebraic group $G$ over $\mathbf{F}_q$ of complexity at most $C$, if $H\leqslant G(\mathbf{F}_q)$ is of index at most $C$ and definable in $\mathbf{F}_q$ of complexity at most $C$, then $H=G(\mathbf{F}_q)$.
\end{lemma}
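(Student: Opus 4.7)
The plan is to proceed by a standard compactness/ultraproduct argument and reduce to Fact \ref{simply_connected_gives_def_connected}. Suppose for contradiction that the conclusion fails. Then for each $n\in\mathbb{N}$ we can find a prime power $q_n\geqslant n$, a connected, simply connected linear algebraic group $G_n$ over $\mathbf{F}_{q_n}$ of complexity at most $C$, and a proper definable subgroup $H_n\lneq G_n(\mathbf{F}_{q_n})$ of index at most $C$ and complexity at most $C$. Since there are only finitely many formulas in the language of rings of length at most $C$, by pigeonhole I may pass to a subsequence on which the polynomial system defining the underlying variety of $G_n$, the polynomial defining the group operation, and the formula defining $H_n$ are all the same (with only the parameters $b_n,d_n$ varying), and on which the index $k=[G_n(\mathbf{F}_{q_n}):H_n]\leqslant C$ is constant.

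Next I would take the ultraproduct $\mathbf{F}=\prod_n\mathbf{F}_{q_n}/\mathcal{U}$ along a non-principal ultrafilter; since $q_n\to\infty$, $\mathbf{F}$ is an infinite pseudofinite field. Letting $b,d\in\mathbf{F}$ denote the ultraproducts of the $b_n,d_n$, the fixed formulas specialize to define a linear algebraic group $G$ over $\mathbf{F}$ (of the same bounded complexity) together with a definable subgroup $H\leqslant G(\mathbf{F})$. By Łoś's theorem $[G(\mathbf{F}):H]=k$, and since each $H_n$ was proper so is $H$, forcing $k\geqslant 2$.

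The key step is then to verify that $G$ itself is connected and simply connected as an algebraic group over $\mathbf{F}$. Both conditions, restricted to the family of linear algebraic groups of complexity at most $C$ inside a common ambient $\mathrm{GL}_n$, are first-order expressible in the parameters: geometric irreducibility of a bounded-degree variety is definable in ACF by quantifier elimination, and simply-connectedness for a semisimple (or reductive) group of bounded complexity reduces to a discrete condition on its root datum, which is visible from a bounded amount of first-order data (maximal tori, the root system, the character lattice). Both properties therefore transfer from the $G_n$ to $G$ by Łoś's theorem. Applying Fact \ref{simply_connected_gives_def_connected} to the connected, simply connected algebraic group $G$ over the pseudofinite field $\mathbf{F}$, we conclude that $G(\mathbf{F})$ has no proper finite-index definable subgroup, contradicting the existence of $H$.

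The main obstacle is the transfer of simply-connectedness in the third paragraph: connectedness is essentially immediate from quantifier elimination in ACF applied to the bounded-degree defining ideal, but simply-connectedness is a more delicate structural property. The cleanest route is to appeal to the classification of reductive groups in the bounded-complexity family, so that the property becomes a condition on a discrete (root-datum) invariant which is pigeonhole-stable before taking the ultraproduct; alternatively, one can avoid the classification by directly coding the non-existence of non-trivial central isogenies from bounded-complexity connected groups as a first-order scheme in the defining parameters, and then applying Łoś.
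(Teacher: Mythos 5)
Your overall framework is the same as the paper's: pass to a non-principal ultraproduct, reduce to Fact \ref{simply_connected_gives_def_connected}, and deduce a contradiction. The place where you and the paper diverge is precisely the step you flag as the main obstacle, namely transferring simple-connectedness to the ultraproduct.

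The paper handles this by contraposition rather than by trying to express simple-connectedness as a forward-transferable property. It explicitly forms the ultraproduct $\mathbf{K}$ of the algebraic closures $\mathbf{F}_{q_n}^{\mathrm{alg}}$ alongside $\mathbf{F}$, and then argues: if $G$ were not simply connected over $\mathbf{F}$, a witnessing non-isomorphism central isogeny $f:G'\to G$ would be given by some concrete finite amount of data, and applying \L o\'s's theorem in $\mathbf{K}$ (twice: once to say $f_n$ is a morphism of algebraic groups $G'_n\to G_n$, once to say the kernel over the algebraic closure is central of size $k>1$) produces non-trivial central isogenies $G'_n\to G_n$ over $\mathbf{F}_{q_n}$ for ultrafilter-many $n$, contradicting simple-connectedness of $G_n$. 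The complexity bound on the putative $G'$ comes for free from picking a specific witness, so one never has to decide in advance how to quantify over ``all possible'' isogenies.

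Your option (b) is in spirit the forward version of this same argument and would work, provided you state it carefully: for each fixed bound $N$ on the complexity of candidate groups $G'$ and isogenies $f$, the non-existence of a non-isomorphism central isogeny of complexity $\leqslant N$ is expressible (over the algebraic closure) and transfers by \L o\'s; ranging over all $N$ then gives simple-connectedness of $G$. But note two things your write-up currently elides. First, ``central isogeny'' (surjective, finite central kernel) is a condition on the algebraic-closure points, so \L o\'s must be applied in the ultraproduct of the $\mathbf{F}_{q_n}^{\mathrm{alg}}$, not merely in $\mathbf{F}$; you should set up $\mathbf{K}$ explicitly as the paper does. Second, your option (a) — classifying by root data — is not adequate in the generality of this lemma. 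The paper's definition of ``simply connected'' is the central-isogeny condition for an arbitrary connected algebraic group, not just a semisimple or reductive one; the root-datum dictionary does not directly apply to, say, groups with non-trivial unipotent radical, so this route as stated would require additional reduction steps. Option (b), or the paper's contrapositive version of it, is the one that actually closes the argument.
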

\begin{proof}
    Otherwise we may find an increasing sequence $(q_n)_{n\in\omega}$ prime powers, and for each $n$ a connected, simply connected algebraic group $G_n$ over $\mathbf{F}_{q_n}$ of complexity at most $C$, with a proper subgroup $H_n< G_n(\mathbf{F}_{q_n})$ of index at most $C$ and definable in $\mathbf{F}_{q_n}$ of complexity at most $C$. Applying the pigeonhole principle, we may assume that all of the $G_n$ are defined by systems of polynomial equations of the same shape (with different coefficients), and that all of the $H_n$ are defined by instances of the same formula.

    Now, let $\mathbf{F}$ be a non-principal ultraproduct of the $\mathbf{F}_{q_n}$, let $H$ be the corresponding ultraproduct of the $H_n$, and let $\mathbf{K}$ be the corresponding ultraproduct of the algebraically closed fields $\mathbf{F}_{q_n}^{\mathrm{alg}}$. Consider $\mathbf{F}$ as a substructure of $\mathbf{K}$ in the natural way. By Łoś's theorem applied in $\mathbf{K}$, if we let $G$ be defined by the same polynomials as the $G_n$ are and with coefficients coming from the elements of $\mathbf{F}$ given by the equivalence classes of the sequences of elements of $\mathbf{F}_{q_n}$ that give the coefficients of the $G_n$, then $G$ is an algebraic group over $\mathbf{F}$. Also, by Łoś's theorem applied in $\mathbf{F}$, $H$ is a definable proper subgroup of $G(\mathbf{F})$ of index at most $C$. So by Fact \ref{simply_connected_gives_def_connected} it suffices to show that $G$ is connected and simply connected to get a contradiction.
    
    This follows from general grounds. For example, suppose that $G$ is not simply connected; thus there is an algebraic group $G'$ and a central isogeny $f:G'\to G$, all defined over $\mathbf{F}$, with $f$ not an isomorphism. Thus $f$ defines a surjective map $G'(\mathbf{K})\to G(\mathbf{K})$ with finite non-trivial central kernel, say of size $k>1$. Now by Łoś's theorem in $\mathbf{K}$, there are ultrafilter-many $n$ such that, if we replace the coefficients in the polynomials over $\mathbf{F}$ that define $f$ and $G'$ by choices of corresponding elements of $\mathbf{F}_{q_n}$, we obtain an algebraic group $G'_n$ over $\mathbf{F}_{q_n}$ and a morphism $f_n:G'_n\to G_n$ over $\mathbf{F}_{q_n}$. Similarly, again by Łoś's theorem in $\mathbf{K}$, there are ultrafilter-many of those $n$ such that $f_n$ defines a surjection from $G'_n(\mathbf{F}_{q_n}^{\mathrm{alg}})$ to $G_n(\mathbf{F}_{q_n}^{\mathrm{alg}})$ with central kernel of size $k$, contradicting that the $G_n$ are simply connected. That $G$ is connected as an algebraic group follows similarly.
\end{proof}

From the above lemma and Theorem \ref{main_theorem}, we immediately obtain the following:

\begin{proposition}\label{simply connected theorem}
    For any $M>0$, there is a constant $C>0$ such that the following holds. Suppose $\mathbf{F}$ is a finite field, and that $G$ is a connected, simply connected linear algebraic group over $\mathbf{F}$ of complexity at most $M$. Then, for any subset $D\subseteq G(\mathbf{F})$ definable in $\mathbf{F}$ of complexity at most $M$, the bipartite graph $(G(\mathbf{F}),G(\mathbf{F}),xy^{-1}\in D)$ is $C|\mathbf{F}|^{-1/2}$-quasirandom.
\end{proposition}

Below we will discuss generalizations of this in two somewhat orthogonal directions.

\subsection{Sufficiently large characteristic}
Suppose $G$ is an algebraic group over $\mathbb{Z}$, so that $G$ may be considered as an algebraic group over over any field. There are many natural cases where $G$ is not simply connected over any field of positive characteristic, but is simply connected over any field of characteristic $0$. For example, it is easy to check that the additive group over a field of characteristic $0$ is simply connected, whereas in positive characteristic the Frobenius map is a non-invertible isogeny from the additive group to itself.

In such a case, one can get the analogue of Lemma \ref{simply connected gives definably connected, finite case} for finite fields of sufficiently large characteristic; one argues exactly as in Lemma \ref{simply connected gives definably connected, finite case}, just applying Fact \ref{simply_connected_gives_def_connected} in the pseudofinite field of characteristic $0$ obtained by taking an ultraproduct of counterexamples of increasing characteristic.

\begin{lemma}
    Let $G$ be a linear algebraic group defined over $\mathbb{Z}$ such that $G$ is connected and simply connected over a field of characteristic $0$, and let $C$ be a natural number. Then there is some $N$ such that, for any finite field $\mathbf{F}$ of characteristic at least $N$, $G(\mathbf{F})$ has no proper subgroup definable in $\mathbf{F}$ of index and complexity at most $C$.
\end{lemma}

So one gets the corresponding analogue of Proposition \ref{simply connected theorem}:

\begin{proposition}\label{simply connected theorem char 0}
    Let $G$ be a linear algebraic group defined over $\mathbb{Z}$ such that $G$ is connected and simply connected over a field of characteristic $0$, and let $M$ be a natural number. Then there is some $C$ such that the following holds: if $\mathbf{F}$ is a finite field of characteristic at least $C$, then, for any subset $D\subseteq G(\mathbf{F})$ definable in $\mathbf{F}$ of complexity at most $M$, the bipartite graph $(G(\mathbf{F}),G(\mathbf{F}),xy^{-1}\in D)$ is $C|\mathbf{F}|^{-1/2}$-quasirandom.
\end{proposition}

There are many natural examples; let us mention two in particular.

 \subsubsection{Additive group}\label{additive group section}
As mentioned to above, the additive group is simply connected over a field of characteristic $0$, so Proposition \ref{simply connected theorem char 0} applies to it. So in particular we get the following:

 \begin{proposition}\label{char_0}
     For any $M>0$, there is a constant $C>0$ such that the following holds. Suppose $\mathbf{F}$ is a finite field of characteristic $>C$, and that $D\subseteq \mathbf{F}$ is a definable subset  of complexity at most $M$. Then the bipartite graph $((\mathbf{F},+),(\mathbf{F},+),x-y\in D)$ is $C|\mathbf{F}|^{-1/2}$-quasirandom. So if $\mathbf{F}$ has characteristic $>C$, then every definable subset of $\mathbf{F}$ of complexity at most $M$ is a $C|\mathbf{F}|^{-1/8}$-quasirandom subset of $(\mathbf{F},+)$.
 \end{proposition}

 Let us make a brief comparison with some of the results from \cite{green}. Recall from Fact \ref{green_theorem} the `finite field model' case of the regularity lemma from \cite{green}: for any prime $p$ and $\varepsilon\in(0,1/2)$, there is $C=C(p,\varepsilon)>0$ such that, if $G$ is of form $(\mathbf{F}_{p^n},+)$ for some $n$, and $D\subseteq G$ is an arbitrary subset, then there is a subgroup $H\leqslant G$ of index at most $C$ and such that, for all but $\varepsilon|G|$-many $g\in G$, the graph $(g+H,H,x-y\in D)$ is $\varepsilon$-quasirandom. Theorem \ref{main_theorem} in the case of the additive group and Proposition \ref{char_0} relate in a similar way to that result as Tao's `algebraic regularity lemma' relates to Szemerédi's regularity lemma; for a sharply restricted class of subsets, namely those definable of bounded complexity in the ring language, we get an improved regularity result, with no irregular cosets and `power-saving' degree of quasirandomness in the size of the field. We also get an improvement that does not have an analogue in the usual graph-theoretic version, which is that the bound for the index of the subgroup is independent of the characteristic $p$, and, for sufficiently large characteristic, one does not need to pass to a subgroup at all – the definable subsets of fixed complexity are already quasirandom. In contrast, if one applies Green's theorem to subsets of $(\mathbf{F}_{p^n},+)$, then, the larger the characteristic $p$ is, the quantitatively worse of a bound one gets, with higher characteristic requiring larger index subgroups.

 \subsubsection{Heisenberg group}
 Recall the Heisenberg group $H_3$, which is the linear algebraic group over $\mathbb{Z}$ consisting of $3\times 3$ upper-triangular matrices with $1$s along the diagonal. This is a typical example of a unipotent subgroup of $\mathrm{GL}_3$. The $\mathbb{C}$-points $H_3(\mathbb{C})$ are simply connected as a topological group, hence in particular simply connected as an algebraic group, so $H_3$ is simply connected as an algebraic group over any field of characteristic $0$. Thus again Proposition \ref{simply connected theorem char 0} applies, telling us that, for a finite field $\mathbf{F}$ of sufficiently large characteristic, any subset of $H_3(\mathbf{F})$ definable in $\mathbf{F}$ of bounded complexity is quasirandom.

 As far as we are aware, this gives one of the first general results on quasirandom subsets of $H_3(\mathbf{F})$. As mentioned in Section \ref{quasirandom subsets of groups section}, quasirandomness is extremely well-studied for subsets of abelian groups, and appears ubiquitously in the literature on additive combinatorics; likewise, by \cite{gowers_groups} and other related work, quasirandomness is also well-studied for groups that are `very non-abelian' (for example, non-abelian simple). However, this leaves many important cases undealt with; in particular, for example, the solvable or even nilpotent case seems difficult to analyze using the above tools. The representation-theoretic tools in \cite{gowers_groups}, such as Fact \ref{qr_dimension}, of course do not give anything useful for solvable groups. On the other hand, it is not obvious to the authors how theorems such as those from \cite{green} could be generalized to the solvable or even nilpotent setting; the issue is that it is not obvious to us how to understand quasirandomness of subsets of a group $G$ even if one has an understanding of quasirandomness of subsets of $H$ and $G/H$, where $H\leqslant G$ is a normal subgroup.

 So our results in this paper seem to us of particular interest in the solvable or nilpotent case. Returning to $H_3(\mathbf{F})$, Proposition \ref{simply connected theorem char 0} tells us that, if $\mathbf{F}$ has sufficiently large characteristic, then, for every definable subset $D\subseteq H_3(\mathbf{F})$ of bounded complexity, the graph $(H_3(\mathbf{F}),H_3(\mathbf{F}),xy^{-1}\in D)$ is quasirandom. Thus, for example, one might take $D$ to be the set of matrices in $\mathrm{H}_3$ all of whose entries are quadratic residues, giving an analogue of the Paley graphs but in $\mathrm{H}_3$. Quasirandomness of these graphs could likely also be proven directly, as for the Paley graphs, but the point is that it follows `for free' from Proposition \ref{simply connected theorem char 0}.
 
 Let us mention some morally related work, which is the paper \cite{heisenberg_graphs} and the family of related work cited there. In those papers, the authors study bipartite Cayley graphs of the form $(H_3(\mathbf{F}),H_3(\mathbf{F}),xy^{-1}\in D)$. In contrast with our case, they study the case where the graph is `sparse': either $D$ is finite of bounded size, or it is definable of bounded complexity but still sparse. (One concrete example is the set $D$ of matrices in $H_3$ exactly two of whose upper diagonal entries are $0$; this has density $\approx1/|\mathbf{F}|$ in $H_3(\mathbf{F})$.)
 
 Sparse graphs are trivially quasirandom, and so our result does not say anything interesting for such subsets. However, the work of \cite{heisenberg_graphs} studies what could be considered an appropriate analogue of quasirandomness for sparse graphs, which is the spectral gap of the adjacency matrix of $(\mathrm{H}_3(\mathbf{F}),\mathrm{H}_3(\mathbf{F}),xy^{-1}\in D)$; see for instance Chapter 3 of \cite{zhao} for a precise statement of the connection.

 \subsection{Semisimple case}\label{semisimple case section}
By Theorem \ref{simply connected theorem}, the definable subsets of a connected, simply connected linear algebraic group are already quasirandom, without having to pass to a subgroup. 

Among the simply connected algebraic groups, the main distinguished class with a good structure theory is that of the semisimple ones. In the semisimple simply connected case, however, one can say substantially more than Theorem \ref{simply connected theorem}, and arbitrary subsets will be quasirandom, not just definable ones; this is the most general setting akin to Example \ref{sl2_example}. In this section we will briefly point out why that is the case; it is well-known to experts and is an immediate consequence of results in the literature, but we include the discussion for completeness. We would like to thank Emmanuel Breuillard for helpful discussion and for pointing us to some references; in particular, see Proposition 6.1 of his notes \cite{breuillard_2}, which gives essentially the same observation.

First let us recall the relevant definitions. Let $\mathbf{F}$ be a field, let $\mathbf{K}$ be an algebraically closed field containing $\mathbf{F}$, and let $G$ be a connected algebraic group over $\mathbf{F}$. We say $G$ is `almost-simple' if every proper normal connected algebraic (over $\mathbf{K}$) subgroup of $G$ is trivial; in this case $Z(G)$ is finite and $G(\mathbf{K})/Z(G(\mathbf{K}))$ is simple as an abstract group. For example, $\mathrm{SL}_2$ is almost simple in any characteristic. We say $G$ is `semisimple' if every proper normal connected abelian algebraic (over $\mathbf{K}$) subgroup of $G$ is trivial. If $G$ is semisimple, then there is a connected, simply connected, semisimple algebraic group $\widetilde{G}$ and a central isogeny $\widetilde{G}\to G$, all defined over $\mathbf{F}$ and unique up to isomorphism over $\mathbf{F}$, called the `universal cover' of $G$. Moreover, there are connected almost-simple algebraic subgroups $G_1,\dots,G_s$ of $\widetilde{G}$, defined over $\mathbf{F}$, such that the multiplication map $G_1\times\dots\times G_s\to\widetilde{G}$ is an isomorphism of algebraic groups (ie, it is both a morphism of algebraic groups and an isomorphism of varieties).

Now, suppose $G$ is a simply connected almost-simple algebraic group defined over a finite field $\mathbf{F}$. Note that, for sufficiently large finite fields $\mathbf{F}'\geqslant\mathbf{F}$, the abstract group $G(\mathbf{F}')$ is simple modulo its center; this is clasically known and there are many ways to see it, but in the spirit of this paper we point out a quick pseudofinite argument. Suppose there are arbitrarily large finite $\mathbf{F}'\geqslant\mathbf{F}$ with $G(\mathbf{F}')/Z(G(\mathbf{F}'))$ not simple. Let $\mathbf{F}_0$ be an ultraproduct. By Corollary 5.3 of \cite{hrushovski_pillay_2}, $G(\mathbf{F}_0)/Z(G(\mathbf{F}_0))$ is simple as an abstract group. On the other hand, $\mathbf{F}_0$ is $\aleph_0$-saturated (and in fact $\aleph_1$-saturated), so the conjugacy class of any non-trivial element of $G(\mathbf{F}_0)/Z(G(\mathbf{F}_0))$ must generate it in a bounded number of steps. So, by Łoś's theorem, the same is true for one of the $\mathbf{F}'$. This contradicts that $G(\mathbf{F}')/Z(G(\mathbf{F}'))$ is not simple.

By the main theorem of \cite{landazuri_seitz}, it then follows that there is an absolute constant $C$ such that, for all finite $\mathbf{F}'\geqslant\mathbf{F}$, every non-trivial complex projective representation of $G(\mathbf{F}')/Z(G(\mathbf{F}'))$ has dimension at least $C|\mathbf{F}'|$. It follows that every non-trivial complex linear representation of $G(\mathbf{F}')$ has dimension at least $C|\mathbf{F}'|$; indeed, if $\pi:G(\mathbf{F}')\to\mathrm{GL}(V)$ is an irreducible complex linear representation, then each element of $Z(G(\mathbf{F}'))$ acts $G(\mathbf{F}')$-linearly on $V$, and so by Schur's lemma must act as multiplication by some scalar. So $Z(G(\mathbf{F}'))$ acts trivially on the induced projective representation $G(\mathbf{F}')\to\mathrm{PGL}(V)$, and hence $\pi$ gives rise to a complex projective representation of $G(\mathbf{F}')/Z(G(\mathbf{F}'))$. On the other hand, by a similar ultraproduct argument as above, $G(\mathbf{F}')$ is perfect for sufficiently large $\mathbf{F}'$, in which case some element of it must act on $V$ as a non-scalar, whence the induced complex projective representation of $G(\mathbf{F}')/Z(G(\mathbf{F}'))$ is non-trivial. So indeed the claim follows from \cite{landazuri_seitz}. Altogether, we get the following:

\begin{fact}\label{cfsg}
    Suppose $\mathbf{F}$ is a finite field, and that $G$ is a simply connected almost-simple algebraic group over $\mathbf{F}$. Then every nontrivial complex representation of the abstract group $G(\mathbf{F})$ has dimension at least $C|\mathbf{F}|$, where $C$ is a constant depending only on the complexity of $G$.
\end{fact}

Note that, if $H$ is an abstract group generated by subgroups $H_1,\dots,H_n$, and every nontrivial representation of $H_i$ has dimension at least $m_i$, then every nontrivial representation of $H$ has dimension at least $\min_im_i$. So we get the following by Fact \ref{cfsg} and the structure theory of semisimple algebraic groups:

\begin{corollary}
    Suppose that $\mathbf{F}$ is a finite field, and that $G$ is a simply connected semisimple algebraic group over $\mathbf{F}$. Then every nontrivial complex representation of the abstract group $G(\mathbf{F})$ has dimension at least $C|\mathbf{F}|$, where $C$ is a constant depending only on the complexity of $G$.
\end{corollary}
\begin{proof}
    Since $G$ is simply connected and semisimple, there are almost-simple algebraic subgroups $G_1,\dots,G_s$ of $G$, defined over $\mathbf{F}$, such that the multiplication map $G_1\times\dots\times G_s\to G$ is an isomorphism of group varieties. It follows that the multiplication map $G_1(\mathbf{F})\times\dots\times G_s(\mathbf{F})\to G(\mathbf{F})$ is an isomorphism of abstract groups. Let $r_i$ be the rank of $G_i$ and let $d_i$ be the Zariski dimension of $G_i$. By Fact \ref{cfsg}, any nontrivial complex representation of $G_i(\mathbf{F})$ has dimension at least $|\mathbf{F}|^{r_i}/C_i$, where $C_i$ depends only on $d_i$. So every nontrivial complex representation of $G(\mathbf{F})$ has dimension at least $\min_i(|\mathbf{F}|^{r_i}/C_i)\geqslant|\mathbf{F}|/\max_i(C_i)$.
\end{proof}

Now by Fact \ref{qr_dimension} one immediately gets:

\begin{corollary}\label{simply connected semisimple is quasirandom}
    Suppose that $\mathbf{F}$ is a finite field, and that $G$ is a simply connected semisimple algebraic group over $\mathbf{F}$. Then every subset of the abstract group $G(\mathbf{F})$ is $C|\mathbf{F}|^{-1/2}$-quasirandom, where $C$ is a constant depending only on the complexity of $G$.
\end{corollary}

Suppose now that $G$ is semisimple but not simply connected. We can still get quasirandomness of the $\mathbf{F}$-points, but we need to pass to an appropriate bounded-index subgroup. This is because of the following, which was proved in \cite{hrushovski_pillay_2} and independently in \cite{sclosa}:

\begin{fact}
    Suppose $\mathbf{F}$ is a finite field, $G$ is a semisimple algebraic group defined over $\mathbf{F}$, and $f:G'\to G$ is an isogeny defined over $\mathbf{F}$. If the kernel of $f$ has size $n$, then $f(G'(\mathbf{F}))$ is a subgroup of $G(\mathbf{F})$ of index $n$.
\end{fact}

In particular, if $G$ is semisimple but not simply connected, then $G(\mathbf{F})$ will have proper subgroups of small index, and hence cannot be quasirandom. However, we still get the following:

\begin{proposition}
    Suppose that $\mathbf{F}$ is a finite field and that $G$ is a connected semisimple algebraic group over $\mathbf{F}$. Let $f:\tilde{G}\to G$ be its universal cover, and suppose its kernal has size $n$. Then $f(\tilde{G}(\mathbf{F}))$ is a subgroup of $G(\mathbf{F})$ of index $n$, every subset of which is $C|\mathbf{F}|^{-1/2}$-quasirandom, where $C$ is a constant depending only on the complexity of $G$.
\end{proposition}

}
\newpage

\end{document}